\documentclass[12pt, reqno]{amsart}

\usepackage{fullpage}
\usepackage{amsmath}
\usepackage{amssymb}

\newcommand{\N}{\mathbb{N}}

\newcommand{\R}{\mathbb{R}}
\newcommand{\C}{\mathbb{C}}

\newcommand{\B}{\mathbb{B}}
\newcommand{\D}{\mathbb{D}}
\renewcommand{\S}{\mathbb{S}}
\newcommand{\mc}{\mathcal}
\newcommand{\mb}{\mathbf}

\newcommand{\rg}{\operatorname{rg}}

\renewcommand{\Re}{\operatorname{Re}}

\newtheorem{lemma}{Lemma}[section]
\newtheorem{theorem}[lemma]{Theorem}
\newtheorem{corollary}[lemma]{Corollary}
\newtheorem{proposition}[lemma]{Proposition}

\theoremstyle{remark}
\newtheorem{remark}[lemma]{Remark}

\theoremstyle{definition}
\newtheorem{definition}[lemma]{Definition}

\numberwithin{equation}{section}

\title{Spectral theory and self-similar blowup in wave equations}
\author{Roland Donninger}
\address{Universität Wien, Fakultät für Mathematik,
  Oskar-Morgenstern-Platz 1, 1090 Vienna, Austria}
\email{roland.donninger@univie.ac.at}
\thanks{This work was supported by the Austrian Science Fund FWF,
  Project P 34560: ``Stable blowup in supercritical wave equations''.}

\begin{document}
\maketitle
\begin{abstract}
  This is an expository article that describes the
  spectral-theoretic aspects in the study of the stability of
  self-similar blowup for nonlinear wave equations. The
  linearization near a self-similar solution leads to a genuinely
  nonself-adjoint operator which is difficult to
  analyze. The main goal of this article is to provide an accessible
  account to the only known method that is capable of providing
  sufficient spectral information to complete the stability
  analysis. The exposition is based on
  a mini course given at the
  \emph{Summer School on Geometric Dispersive PDEs} in Obergurgl,
  Austria, in September 2022.
\end{abstract}

\section{Introduction}

Nonlinear wave equations play a fundamental role in many branches of
the natural sciences and mathematics. Probably the most famous
examples in physics are the
Einstein equation of general relativity and the Yang-Mills equations of
particle physics. What all of these fundamental equations have in
common is the fact that they are \emph{energy-supercritical} (in the
case of
Yang-Mills in spatial dimensions larger than four). This
means that the known conserved quantities (most notably the energy) are not
strong enough to control the evolution. As a result, the mathematical
understanding of large-data evolutions is still embarrassingly
poor. In many cases, however, there exist self-similar solutions
and one may learn something about the general large-data behavior by looking at
perturbations of these large but special solutions. This approach is promising because
it allows one to employ
perturbative techniques in a large-data regime that is otherwise
inaccessible to rigorous mathematical analysis. Such a perturbative treatment
involves a number of interesting spectral-theoretic aspects that are
at the center of this article.

\subsection{Wave maps}

For the purpose of this exposition we will not discuss nonlinear wave
equations in any kind of generality but rather focus on a particular
example:
the classical \emph{SU(2)-sigma model} from particle physics, also
known as the
\emph{wave maps equation}, which constitutes the simplest and prototypical example of a \emph{geometric
wave equation}. The methods we discuss, however, have a much broader
scope and we mention applications to other problems in the end. In order to introduce the model, 
we consider maps $U: \R^{1,3}\to\S^3\subset \R^4$, where $\R^{1,3}$
denotes the $(1+3)$-dimensional Minkowski space. Then $U$ is
called a wave map if it satisfies
the partial differential equation
\begin{equation}
  \label{eq:wm}
  \partial^\mu\partial_\mu U+\langle \partial_\mu U, \partial^\mu
  U\rangle_{\R^4}U=0.
\end{equation}
Here, we employ standard relativistic notation with  Einstein's
summation convention in force\footnote{That is to say, we number the
  slots of a function on $\R^{1,3}$ from $0$ to $3$ where the $0$-th
  slot holds the time variable. The partial derivative with respect
  to the $\mu$-th slot is denoted by $\partial_\mu$ and we write
  $\partial^0:=-\partial_0$ as well as $\partial^j:=\partial_j$ for
  $j\in\{1,2,3\}$. Furthermore, indices that come in pairs of
  subscripts and superscripts get summed over implicitly. Greek (spacetime) indices run
  from $0$ to $3$ and latin (spatial) indices run from $1$ to $3$.} and $\langle\cdot,\cdot\rangle_{\R^4}$
denotes the Euclidean inner product on $\R^4$.
The wave maps equation arises as the Euler-Lagrange equation of the
action functional
\begin{equation}
  \label{eq:action}
  U\mapsto \int_{\R^{1,3}}\langle \partial^\mu U, \partial_\mu
  U\rangle_{\R^4}
\end{equation}
under the constraint that $U(t,x)\in\S^3$ for all $(t,x)\in
\R^{1,3}$. Note that without the constraint, the Euler-Lagrange
equation associated to Eq.~\eqref{eq:action} is the standard free wave equation
$\partial^\mu\partial_\mu U=0$. In this sense, wave maps are natural generalizations of
solutions to the  wave
equation when the unknown takes values in
the sphere. In place of Minkowski space and the three-sphere, one may also consider more general
  manifolds by adapting the functional
  \eqref{eq:action} accordingly. This shows that the wave maps action
  is a rich source for interesting and natural geometric wave equations. In this exposition, for the sake of concreteness, we restrict
  ourselves to maps from $\R^{1,3}$ to $\S^3$.
We remark in passing that in more traditional
 notation, Eq.~\eqref{eq:wm} would read
\[ -\partial_t^2 U(t,x)+\Delta_x U(t,x)=\left(\langle\partial_tU(t,x),
  \partial_tU(t,x)\rangle_{\R^4}-\sum_{j=1}^3
  \langle\partial_{x^j}U(t,x),
  \partial_{x^j}U(t,x)\rangle_{\R^4}\right)U(t,x) \]
but in this form, the underlying geometric structure is severely obscured.

\subsection{Corotational wave maps and singularity formation}
The most basic question concerns the existence of smooth solutions to Eq.~\eqref{eq:wm}.
For the sake of simplicity, we further restrict our attention to the special
class of \emph{corotational maps} which are of the
form
\[ U(t,x)=
  \begin{pmatrix}
    \sin(u(t,|x|))\frac{x}{|x|} \\
    \cos(u(t,|x|))
  \end{pmatrix}
\]
for an auxiliary function $u: \R\times [0,\infty)\to\R$.
This ansatz turns out to be compatible with the wave maps equation,
i.e., when plugging it in, we obtain the single semilinear radial wave
equation
\begin{equation}
  \label{eq:wmcor}
  \left (\partial_t^2-\partial_r^2-\frac{2}{r}\partial_r\right)u(t,r)+\frac{\sin(2u(t,r))}{r^2}=0.
\end{equation}
The principal goal is to construct global solutions and since Eq.~\eqref{eq:wmcor} is
a wave equation, the natural mathematical setting to approach this
question is to study the
\emph{Cauchy problem}, i.e., we prescribe \emph{initial data} $u(0,\cdot)$ and $\partial_0
u(0,\cdot)$ and try to construct a solution to Eq.~\eqref{eq:wmcor} with these data. However,
\[ u^T(t,r):=2\arctan(\tfrac{r}{T-t}) \]
for any $T\in \R$ solves Eq.~\eqref{eq:wmcor} on $\R\times
[0,\infty)\setminus\{(T,0)\}$, as one checks by a direct computation. At $(t,r)=(T,0)$,
$u^T$ exhibits a gradient blowup and hence, it is impossible to
construct global smooth solutions for arbitrary data. Consequently, the goal is to understand the
nature of this breakdown (or ``loss of smoothness'' or ``singularity
formation'' or ``blowup'') and its relevance for ``generic''
evolutions. More precisely, the question is whether $u^T$ can tell us
something about more general large-data evolutions, even though it is just
one particular solution. In other words, we are interested in
stability properties of $u^T$, i.e., we would like to understand
\emph{all} solutions that are close to $u^T$. We
remark that $u^T$ is a \emph{self-similar solution}, i.e., it depends on
the ratio $\frac{r}{T-t}$ only. The existence of self-similar
solutions to Eq.~\eqref{eq:wmcor} was first proved in
\cite{Sha88} and the explicit example $u^T$ was found in
\cite{TurSpe90}, see \cite{BizBie15} for higher
dimensions.
In fact, there are many more self-similar
solutions to Eq.~\eqref{eq:wmcor}, see \cite{Biz00}, but they are all
linearly unstable and hence less important for studying generic evolutions.

\section{The mode stability problem}

If the self-similar solution $u^T$ has any relevance for generic
large-data evolutions, it certainly must be stable under perturbations of
the initial data. Thus, an important mathematical goal is to prove (or
disprove) the stability of $u^T$. The most elementary form of
stability is \emph{mode stability}. The formulation of the mode
stability problem can
be given purely on the level of the differential equation and
requires no operator-theoretic framework. 

\subsection{Similarity coordinates}
In order to introduce the mode stability problem,
we start with the wave maps equation \eqref{eq:wmcor}
and switch to \emph{similarity coordinates}
\begin{equation}
  \label{eq:sim}
  \tau=-\log(T-t)+\log T,\qquad \rho=\frac{r}{T-t} 
\end{equation}
or
\[ t=T-Te^{-\tau},\qquad r=Te^{-\tau}\rho, \]
where $T>0$ is a parameter.
Then $u$ satisfies Eq.~\eqref{eq:wmcor} if and only if
$v_T(\tau,\rho):=u(T-Te^{-\tau}, Te^{-\tau}\rho)$ satisfies
\begin{equation}
  \label{eq:wmcorss}\left
    [\partial_\tau^2+2\rho\partial_\tau\partial_\rho+\partial_\tau
    -(1-\rho^2)\partial_\rho^2+\left (2\rho-\frac{2}{\rho}\right)\partial_\rho\right ]
     v_T(\tau,\rho)+\frac{\sin(2v_T(\tau,\rho))}{\rho^2}=0.
   \end{equation}
 Observe the remarkable fact that Eq.~\eqref{eq:wmcorss} is an
 autonomous equation, i.e., its coefficients do not depend on
 $\tau$. This is in fact a decisive feature of the similarity coordinates
 \eqref{eq:sim}.
 Furthermore, the parameter $T$ does not show up in
 Eq.~\eqref{eq:wmcorss}. To begin with, we will consider
 Eq.~\eqref{eq:wmcorss} in the coordinate range $\tau\geq 0$ and
 $\rho\in [0,1]$, which corresponds to the backward lightcone of the
 point $(T,0)$ in the ``physical'' coordinates $(t,r)$.
 
The blowup solution
$u^{T'}(t,r)=2\arctan(\frac{r}{T'-t})$ transforms into
\[
  v_T^{T'}(\tau,\rho):=u^{T'}(T-Te^{-\tau},Te^{-\tau}\rho)=2\arctan\left
    (\frac{\rho}{1+(\frac{T'}{T}-1)e^\tau}\right). \]
We would like to understand the stability of the family $\{v_T^{T'}:
T'>0\}$.
First, let us point out that $v_T^T$ is independent of $\tau$ whereas
nearby members of the family move
away from $v^T_T$ as $\tau$ increases. Indeed, if $T'<T$,
$v^{T'}_T(\tau,\cdot)$ develops a gradient blowup as $\tau\to\tau_*$,
where $\tau_*$ is determined by $(\frac{T'}{T}-1)e^{\tau_*}=-1$.
On the other hand, if $T'>T$, $v_T^{T'}(\tau,\rho)\to 0$ as
$\tau\to\infty$.
 By these observations, it is expected that the $\tau$-independent solution $v_T^T$ is
unstable because a generic perturbation will push it towards a
nearby member of the family. However, such a ``push'' can be compensated by
adapting $T$. Thus, the instability is ``artificial'' and
caused by the free parameter $T$ in the definition of the similarity
coordinates or, on a more fundamental level, by the time-translation
invariance of the wave maps equation. In other words, stability of the
blowup means that for any given
(small) initial perturbation of $u^1$, say, there exists a
$T$ close to $1$ that makes the
evolution in similarity coordinates with parameter $T$ converge to
$v_T^T$. This is very natural in view of the expectation that a
perturbation of a blowup solution will in general change the blowup time.

\subsection{Mode solutions}
The most elementary stability analysis consists of looking for
\emph{mode solutions}. This means that one plugs in the ansatz
\[ v_T(\tau,\rho)=v_T^T(\rho)+e^{\lambda\tau}f(\rho),\qquad \lambda\in\C \]
into Eq.~\eqref{eq:wmcorss} and linearizes in $f$. This yields the
``spectral problem''
\begin{equation}
  \label{eq:spec}-(1-\rho^2)f''(\rho)-\frac{2}{\rho}f'(\rho)+2(\lambda+1)\rho
  f'(\rho)+\frac{2\cos(2v_T^T(\rho))}{\rho^2}f(\rho)+\lambda(\lambda+1)f(\rho)=0.
\end{equation}
Clearly, if there are ``admissible'' mode solutions with
$\Re\lambda>0$, we expect the solution $v_T^T$ to be unstable.
What exactly ``admissible'' in this context means can only be answered
once one has set up the functional analytic framework to study the
wave maps evolution. For now we will restrict ourselves to smooth
solutions and we will see later that this is the correct class of
functions. Furthermore, observe that Eq.~\eqref{eq:spec} has singular points at
$\rho=0$ and $\rho=1$ and therefore, it is expected that only for
special values of $\lambda$ there will be
nontrivial solutions in $C^\infty([0,1])$. Another important fact is that
Eq.~\eqref{eq:spec} does not constitute a standard eigenvalue problem
because the spectral parameter $\lambda$ appears in the coefficient of the
derivative $f'$ as well. This is easily traced back to the fact that
the wave maps equation is second-order in time. Consequently, this
issue is not present in analogous parabolic problems where the
corresponding spectral analysis is therefore much simpler. 
Of course, the first-order term can always
be removed but the corresponding transformation depends on $\lambda$ itself. As a
consequence, it turns out that Eq.~\eqref{eq:spec} is \emph{not} a
self-adjoint Sturm-Liouville problem in disguise where standard methods from
mathematical physics would apply. We discuss this in more detail below.

We have
already argued that we expect an ``artificial'' instability of
$v_T^T$. So how does this instability show up in the context of the
spectral problem Eq.~\eqref{eq:spec}? To see this, we differentiate
the equation
\[  \left [\partial_\tau^2+2\rho\partial_\tau\partial_\rho+\partial_\tau
    -(1-\rho^2)\partial_\rho^2+\left (2\rho-\frac{2}{\rho}\right)\partial_\rho\right ]
     v_T^{T'}(\tau,\rho)+\frac{\sin(2v_T^{T'}(\tau,\rho))}{\rho^2}=0.
   \]
   with respect to $T'$ and evaluate the result at $T'=T$. This yields
 \[  \left [\partial_\tau^2+2\rho\partial_\tau\partial_\rho+\partial_\tau
    -(1-\rho^2)\partial_\rho^2+\left (2\rho-\frac{2}{\rho}\right)\partial_\rho\right ]
  v_*(\tau,\rho)+\frac{2\cos(2v_T^{T}(\rho))}{\rho^2}
  v_*(\tau,\rho)=0
\]
with
\[ v_*(\tau,\rho):=\partial_{T'}v_T^{T'}(\tau,\rho)\big
  |_{T=T'}=-\frac{2}{T}e^\tau\frac{\rho}{1+\rho^2}. \]
Observe that $v_*$ is a mode solution.
Consequently, the function $\rho\mapsto\frac{\rho}{1+\rho^2}$ solves
Eq.~\eqref{eq:spec} with $\lambda=1$ and this is the mode solution that
reflects the expected ``artificial'' instability. This observation naturally leads
to the following definition.

\begin{definition}
  \label{def:modstab}
  We say that the blowup solution $u^T$ is \emph{mode
    stable}\footnote{The experienced reader might think ahead and be worried about
    spectral multiplicities. It turns out that this is never an issue
    in the class of problems we consider here and therefore,
    Definition \ref{def:modstab} is the ``correct'' one. At this point we cannot
    even discuss multiplicities because we do not yet have a
    proper operator-theoretic framework.} if the
  existence of a nontrivial $f\in C^\infty([0,1])$ that satisfies
  Eq.~\eqref{eq:spec} necessarily implies that $\Re\lambda<0$ or $\lambda=1$.
\end{definition}

In what follows, we somewhat imprecisely call $\lambda\in \C$ an
\emph{eigenvalue} of Eq.~\eqref{eq:spec} if Eq.~\eqref{eq:spec} has a
nontrivial solution in $C^\infty([0,1])$. Accordingly, we call such a
solution an \emph{eigenfunction} of Eq.~\eqref{eq:spec}.

\section{Solution of the mode stability problem}

In this section, which is at the heart of the present exposition, we describe an approach to the mode stability problem that was developed in Irfan
Glogi\'c's PhD thesis \cite{Glo18} and first published in
\cite{CosDonGloHua16, CosDonGlo17}, building on earlier work
\cite{DonAic08, DonSchAic12, CosDonXia16} and ideas in
\cite{BizChmTab00, Biz00, Biz05}. So far, it is the only known method that can rigorously
deal with spectral problems like Eq.~\eqref{eq:spec}.

\begin{theorem}
  \label{thm:modstab}
  The blowup solution $u^T$ is mode stable.
\end{theorem}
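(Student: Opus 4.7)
\emph{Plan.} My approach is to reduce \eqref{eq:spec} to a Gauss hypergeometric equation and then read off the eigenvalues from the classical connection formula at $z=1$.

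First, I perform a Frobenius analysis at the two singular points $\rho=0$ and $\rho=1$. Since $v_T^T(0)=0$, we have $\tfrac{2\cos(2v_T^T(\rho))}{\rho^2}\sim\tfrac{2}{\rho^2}$ at the origin, so the indicial roots there are $1$ and $-2$ and smoothness selects a one-dimensional space of germs with $f(\rho)\sim c\rho$. At $\rho=1$ the indicial roots come out to $0$ and $1-\lambda$, so for $\Re\lambda\ge 0$ and $\lambda\notin\{0,1\}$ smoothness uniquely picks out the analytic (root-$0$) branch. Mode stability thus reduces to showing that the smooth-at-$0$ and analytic-at-$1$ solutions are linearly independent whenever $\Re\lambda\ge 0$ and $\lambda\ne 1$.

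Second, I use the explicit identity
\[ \frac{2\cos(2v_T^T(\rho))}{\rho^2}=\frac{2}{\rho^2}-\frac{16}{(1+\rho^2)^2} \]
to exhibit \eqref{eq:spec} as a Fuchsian equation on the Riemann sphere with five singular points $\{0,\pm i,\pm 1,\infty\}$. With the ansatz $f(\rho)=\rho(1+\rho^2)^{-\alpha(\lambda)}h(\rho^2)$, where $\alpha(\lambda)$ is tuned so that the prefactor absorbs the singularities at $\rho=\pm i$, the change of variable $z=\rho^2/(1+\rho^2)$ should collapse the effective singular set to $\{0,1,\infty\}$ and transform \eqref{eq:spec} into a Gauss hypergeometric equation
\[ z(1-z)h''(z)+[c-(a+b+1)z]h'(z)-ab\,h(z)=0 \]
with parameters $(a,b;c)$ rational in $\lambda$. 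The smooth-at-$\rho=0$ solution then corresponds to $h={}_2F_1(a,b;c;\cdot)$, and by the classical $z\mapsto 1-z$ connection formula it extends smoothly to $\rho=1$ precisely when $1/[\Gamma(a)\Gamma(b)]$ vanishes, i.e.\ when $a(\lambda)$ or $b(\lambda)$ is a non-positive integer. A direct enumeration of these $\lambda$ then isolates $\lambda=1$ as the unique one in $\Re\lambda\ge 0$.

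The main obstacle is the bookkeeping at the end. I must check (i) that the prefactor $(1+\rho^2)^{-\alpha(\lambda)}$ does not generate spurious eigenvalues; (ii) that at the resonant values $\lambda\in\{0,1,2,\ldots\}$, where the indicial roots at $\rho=1$ differ by a non-negative integer, the Frobenius expansion does not accidentally yield a second smooth solution---the case $\lambda=0$ is especially delicate, since there both local exponents $0$ and $1$ are non-negative integers and one has to verify by hand that the solution matched to the smooth-at-origin branch is not analytic at $\rho=1$; and (iii) that the Gamma-function condition is controlled uniformly over the unbounded region $\Re\lambda\ge 0$, not merely on a compact subset. If the rigid algebraic reduction turns out to be too brittle at the resonances, I would fall back on the Borel-type integral representation developed in \cite{CosDonGlo17}, which is engineered precisely to provide uniform control of the connection coefficient across the full closed right half-plane.
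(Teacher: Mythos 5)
The central step of your plan---reducing Eq.~\eqref{eq:spec} to a Gauss hypergeometric equation---cannot work, and this is precisely the point on which the whole problem turns. The potential $\frac{16}{(1+\rho^2)^2}$ produces genuine regular singular points at $\rho=\pm i$; after passing to $x=\rho^2$ the equation has regular singular points at $\{0,1,-1,\infty\}$ and is therefore a Heun equation with \emph{four} singularities. The number of regular singular points of a Fuchsian equation on the Riemann sphere is a projective invariant: a M\"obius change of the independent variable merely relabels them, and a prefactor (gauge transformation) can only shift the local exponents, never eliminate a singular point unless it was apparent to begin with. Concretely, your proposed substitution $z=\rho^2/(1+\rho^2)$ sends $\rho^2=0\mapsto 0$, $\rho^2=1\mapsto \tfrac12$, $\rho^2=-1\mapsto\infty$, and $\rho^2=\infty\mapsto 1$: you still have four singular points, now at $\{0,\tfrac12,1,\infty\}$, so the resulting equation is again of Heun type and ${}_2F_1$ never enters. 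The singularity at $\rho^2=-1$ is not apparent (the exponents at $\rho=\pm i$ for the original equation are $2$ and $-1$, which are not both non-negative), so there is no fortuitous degeneration to lean on. This is not a bookkeeping issue at resonances; it is a structural obstruction that defeats the plan at the outset. Indeed the paper points this out explicitly (``If we had only three regular singular points, we would be dealing with a hypergeometric differential equation for which the connection problem was solved in the 19th century''), and the entire machinery the paper builds---supersymmetric removal of $\lambda=1$, reduction to the Heun equation \eqref{eq:specssHeun} with singularities at $\{0,1,2,\infty\}$, the three-term recurrence for the Taylor coefficients, comparison with an explicit quasi-solution, Poincar\'e's theorem on difference equations, and the Phragm\'en-Lindel\"of principle---exists precisely because the hypergeometric shortcut is unavailable.

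Your fallback is also misdirected: \cite{CosDonGlo17} does not provide a ``Borel-type integral representation.'' That reference implements exactly the quasi-solution/recurrence-relation argument the paper outlines. If you want to prove mode stability you need to engage with the Heun connection problem directly, and the only currently known route is the one the paper presents: show that the Taylor series of the smooth-at-$0$ branch has radius of convergence exactly $1$ (not $2$) when $\Re\lambda\ge 0$ and $\lambda\ne 1$, by controlling the ratios $a_{n+1}/a_n$ via an explicit rational quasi-solution and then globalizing the boundary estimate from the imaginary axis to the closed right half-plane with Phragm\'en-Lindel\"of.
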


The proof of mode stability proceeds by the following main steps.

\begin{itemize}
\item We use Frobenius' method to determine the local behavior of solutions to
  Eq.~\eqref{eq:spec} near the singular points $\rho=0$ and
  $\rho=1$.

\item By a factorization procedure inspired by supersymmetric quantum
  mechanics we ``remove'' the eigenvalue $\lambda=1$. More precisely,
  we derive a ``supersymmetric problem'', similar to Eq.~\eqref{eq:spec},
  that has the same eigenvalues as Eq.~\eqref{eq:spec} except for
  $\lambda=1$.

\item We prove that the supersymmetric problem has no eigenvalues in
  the closed complex right half-plane. To this end, we derive a
  recurrence relation for the coefficients of the power series of the
  admissible solution near $\rho=0$ and prove that the series
  necessarily diverges
  at $\rho=1$ if $\Re\lambda\geq 0$. This requires the interplay of
  techniques from the theory of difference equations and complex analysis.
  
\end{itemize}

\subsection{Fuchsian classification}

To begin with, we would like to understand better which problem we are
actually facing.
The term in Eq.~\eqref{eq:spec} involving the cosine turns out to be a
rational function. Indeed, we have
\[ 2\cos(2v_T^T(\rho))=2\cos(4\arctan(\rho))=2\frac{1-6\rho^2+\rho^4}{(1+\rho^2)^2} \]
and thus, Eq.~\eqref{eq:spec} has the (regular) singular points $0, \pm 1, \pm
i, \infty$. By switching to the independent variable $\rho^2$, the
number of singular points can be reduced to four: $0, \pm 1$, and
$\infty$. This means that Eq.~\eqref{eq:spec} is a Fuchsian
differential equation of Heun type.
The normal form for a Heun equation reads
\[ g''(z)+\left
    [\frac{\gamma}{z}+\frac{\delta}{z-1}+\frac{\epsilon}{z-a}\right]g'(z)+\frac{\alpha\beta
    z-q}{z(z-1)(z-a)}g(z)=0 \]
where $\alpha,\beta,\gamma,\delta,\epsilon,a,q\in\C$.
Around each of the singular points there exist two linearly
independent local solutions. The interesting question then is how
local solutions around different singular points are related to each
other. This is known as the \emph{connection problem} and
unfortunately, for Heun equations this problem is widely open. If we had only
three regular
singular points, we would be dealing with a \emph{hypergeometric
  differential equation} for which the connection problem was
solved in the 19th century. This indicates that the spectral problem
we are dealing with is potentially hard.

\subsection{Frobenius analysis}

Now we turn to a more quantitative analysis and first recall
Frobenius' theory for Fuchsian equations of second order. These are
equations over the complex numbers of the form
\begin{equation}
  \label{eq:Fuchs}
  f''(z)+p(z)f'(z)+q(z)f(z)=0
\end{equation}
where $p$ and $q$ are given functions and $f$ is the unknown. In the
following, we write $\D_R:=\{z\in\C: |z|<R\}$.

\begin{theorem}
  \label{thm:Frob}
  Let $R>0$ and let $p,q: \D_R\setminus\{0\}\to\C$ be
  holomorphic. Suppose that the limits
  \[ p_0:=\lim_{z\to 0}[zp(z)],\qquad q_0:=\lim_{z\to 0}[z^2q(z)] \]
  exist and let $s_\pm\in \C$ satisfy $P(s_\pm)=0$, where
  \[ P(s):=s(s-1)+p_0s+q_0 \]
is the \emph{indicial
    polynomial}.
  Let $\Re s_+\geq \Re s_-$. Then there exists a holomorphic function
  $h_+: \D_R\to \C$ with $h_+(0)=1$ and such that $f:
  \D_R\setminus (-\infty,0]\to \C$, given by $f(z)=z^{s_+}h_+(z)$,
  satisfies Eq.~\eqref{eq:Fuchs}.
 Furthermore, if
  $s_+-s_-\notin\N_0$, there exists a holomorphic function $h_-:
  \D_R\to\C$ with $h_-(0)=1$ and such that $f(z)=z^{s_-}h_-(z)$ is
  another solution of Eq.~\eqref{eq:Fuchs} on $\D_R\setminus (-\infty,0]$. 
   Finally, if $s_+-s_-\in \N_0$, there exist $c\in
  \C$ and a
  holomorphic function $h_-: \D_R\to\C$ with $h_-(0)=1$ such that
  \[ f(z)=z^{s_-}h_-(z)+cz^{s_+}h_+(z)\log z \]
  is another solution of Eq.~\eqref{eq:Fuchs} on $\D_R\setminus (-\infty,0]$.
\end{theorem}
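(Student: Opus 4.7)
The plan is to carry out Frobenius' classical power-series procedure at the singular point $z=0$, in three stages: setting up the recurrence, solving it, and verifying convergence. Since $zp(z)$ and $z^2q(z)$ extend to holomorphic functions on $\D_R$, I would first expand $zp(z)=\sum_{n\geq 0}p_nz^n$ and $z^2q(z)=\sum_{n\geq 0}q_nz^n$; by Cauchy's estimates, for every $r'\in(0,R)$ one has $|p_n|,|q_n|\leq M_{r'}\,r'^{-n}$.

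Substituting the ansatz $f(z)=z^s\sum_{n\geq 0}a_nz^n$ into Eq.~\eqref{eq:Fuchs}, multiplying through by $z^{2-s}$, and equating coefficients yields the indicial relation $P(s)a_0=0$ at $n=0$ and the recurrence
\[
P(s+n)\,a_n=-\sum_{k=0}^{n-1}\bigl[(s+k)\,p_{n-k}+q_{n-k}\bigr]a_k,\qquad n\geq 1.
\]
Taking $s=s_+$ and $a_0=1$, the inequality $\Re(s_++n)>\Re s_+\geq\Re s_-$ ensures $P(s_++n)\neq 0$ for every $n\geq 1$, so $(a_n)_{n\geq 0}$ is uniquely determined. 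In the non-resonant case $s_+-s_-\notin\N_0$ the same argument applies verbatim to $s=s_-$, since then $s_-+n\neq s_+$ either.

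The main work, and what I expect to be the chief technical obstacle, is to show that the formal series $h_\pm(z):=\sum a_nz^n$ converges on all of $\D_R$. Fix $r<r'<R$ and use $|P(s_\pm+n)|\geq cn^2$ for $n$ large. Setting $d_n:=r'^n|a_n|$, the recurrence gives an inequality of the shape
\[
d_n\leq \frac{C}{n^2}\sum_{k=0}^{n-1}(k+|s_\pm|+1)\,d_k
\]
for $n$ beyond some $n_0$. A standard induction against a slowly growing geometric sequence $K(1+\varepsilon)^n$ — choosing $K$ large enough to dominate the finite initial segment and then observing that $C(n+|s_\pm|+1)/(\varepsilon n^2)<1$ for $n$ large — closes the loop and produces $|a_n|\leq K((1+\varepsilon)/r')^n$. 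Sending $\varepsilon\downarrow 0$ and $r'\uparrow R$ yields convergence of the series on $\D_R$, so $h_\pm$ is holomorphic on $\D_R$ with $h_\pm(0)=1$, and $z^{s_\pm}h_\pm(z)$ solves Eq.~\eqref{eq:Fuchs} on the slit disk $\D_R\setminus(-\infty,0]$ by construction.

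The resonant case $N:=s_+-s_-\in\N_0$ requires the logarithmic ansatz $f(z)=z^{s_-}\tilde h(z)+c\,z^{s_+}h_+(z)\log z$. Using $Lg=0$ for $g:=z^{s_+}h_+$, where $L$ denotes the operator in Eq.~\eqref{eq:Fuchs}, a short calculation yields
\[
L[g\log z]=\frac{2}{z}g'-\frac{1}{z^2}g+\frac{p(z)}{z}g,
\]
which reduces the problem to an inhomogeneous recurrence for $\tilde h$. This recurrence inverts at every index $n\neq N$; the single obstruction at $n=N$ takes the form $0\cdot a_N=(\text{prior terms})+c\,\beta$ with $\beta$ explicit, and one chooses $c$ to cancel it (setting $a_N=0$ for normalization, since any nonzero $a_N$ may be absorbed into the $h_+$-term). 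Convergence of $\tilde h$ on $\D_R$ follows from the same majorant estimate as before, with the finitely many exceptional indices handled by hand. A mild wrinkle is the degenerate possibility $\beta=0$, in which case $c=0$ and the second solution is again purely power-type.
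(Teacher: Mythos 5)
Most of your argument is sound and follows the classical Frobenius route that the paper itself only sketches (the paper determines the first solution exactly as you do and then invokes a reduction-of-order ansatz for the second solution, whereas you use the direct logarithmic ansatz with an inhomogeneous recurrence; both are legitimate, and your majorant/induction proof of convergence on all of $\D_R$ is the standard way to make the paper's ``simple induction'' precise).

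There is, however, a concrete error in your treatment of the resonant case. Compute the constant $\beta$ that multiplies $c$ in the obstruction equation at $n=N$: the inhomogeneity is $-c\bigl(\tfrac{2}{z}g'-\tfrac{1}{z^2}g+\tfrac{p(z)}{z}g\bigr)=-c\,z^{s_+-2}\Phi(z)$ with $\Phi(z)=2s_+h_+(z)+2zh_+'(z)-h_+(z)+zp(z)h_+(z)$, so $\beta=\pm\Phi(0)=\pm\bigl(2s_++p_0-1\bigr)=\pm P'(s_+)=\pm(s_+-s_-)=\pm N$. Hence $\beta=0$ occurs precisely when $s_+=s_-$, i.e.\ $N=0$, and in that case your prescription ``$c=0$ and the second solution is purely power-type'' fails: with $c=0$ and $\tilde h(0)=1$ the recurrence for $\tilde h$ coincides with that for $h_+$, so you simply reproduce the first solution $z^{s_+}h_+$ rather than another one. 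The correct bookkeeping is: for $N=0$ the obstruction equation is vacuous ($0=0$), $c$ is not determined by it, and to obtain a linearly independent second solution you must take $c\neq 0$ (normalize $c=1$); the logarithm is always present for equal indicial roots. For $N\in\N$ one has $\beta=N\neq 0$ automatically, so the degeneracy you should worry about is not $\beta=0$ but the vanishing of the accumulated ``prior terms'', which forces $c=0$ and yields a log-free second solution --- exactly the phenomenon behind the paper's remark that the $\log$ term may be absent. With this case distinction repaired, your scheme (set $a_N=0$, solve past the resonance, run the same convergence estimate) does close the proof.
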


\begin{proof}[Idea of proof]
  The idea is to plug in a generalized power series ansatz
  $z^\sigma\sum_{k=0}^\infty a_kz^k$ and to determine $\sigma$ and the
  coefficients $(a_k)_{k\in\N_0}$ by comparing powers of $z$. The
  convergence of the corresponding series is then shown by a simple
  induction. The second solution can be obtained by the reduction of order
  ansatz. We remark in passing that even in the case $s_+-s_-\in\N_0$, the $\log$ term may be
  absent but this depends on the fine
  structure and needs to be analyzed on a case-by-case basis.
  We omit the details of the proof because Theorem \ref{thm:Frob} is a classical result that
  can be found in many textbooks, see e.g.~\cite{Tes12} for a modern
  account. 
\end{proof}

Slightly re-arranged, Eq.~\eqref{eq:spec} reads
\begin{equation}
  \label{eq:specre}f''(\rho)+2\frac{1-(\lambda+1)\rho^2}{\rho(1-\rho^2)}f'(\rho)-\left
    [V(\rho)+\frac{\lambda(\lambda+1)}{1-\rho^2}\right]f(\rho)=0.
\end{equation}
with
\[ V(\rho):=2\frac{1-6\rho^2+\rho^4}{\rho^2(1-\rho^2)(1+\rho^2)^2} \]
and the indicial polynomial at $\rho=0$ reads $s(s-1)+2s-2$
with zeros $1$ and $-2$. As expected, there is only one smooth solution around
$\rho=0$ and it behaves like $\rho$. At $\rho=1$, the indicial polynomial is
given by $s(s-1)+\lambda s=0$ with zeros $0$ and $1-\lambda$. Again,
there is only one smooth solution around
$\rho=1$ if $\Re\lambda\geq 0$ (the cases $\lambda\in \{0,1\}$ require some extra care). Thus, our goal is to show that the local solution that is
smooth around $\rho=0$ is necessarily nonsmooth at $\rho=1$ if
$\Re\lambda\geq 0$ (and $\lambda\not=1$).

\subsection{Supersymmetric removal}

The case $\lambda=1$ is special and we already know that this is an
eigenvalue. In order to proceed,
it is necessary to ``remove'' it. This can be achieved by a factorization
procedure that has its origin in supersymmetric quantum mechanics
(hence the name). In our setting, the procedure is as follows. First,
we introduce an auxiliary function $g$ by $f(\rho)=p(\rho)g(\rho)$,
where we choose $p$ in such a way that the resulting equation for $g$
has no first-order derivative. Indeed, inserting the above ansatz into
Eq.~\eqref{eq:specre} yields the condition
\[ p'(\rho)=-\frac{1-(\lambda+1)\rho^2}{\rho(1-\rho^2)}p(\rho) \]
which is satisfied e.g.~by
$p(\rho)=\rho^{-1}(1-\rho^2)^{-\frac{\lambda}{2}}$.
Plugging the ansatz
\[ f(\rho)=\rho^{-1}(1-\rho^2)^{-\frac{\lambda}{2}}g(\rho) \] into
Eq.~\eqref{eq:specre} yields
\begin{equation}
  \label{eq:specg}
  g''(\rho)-V(\rho)g(\rho)=\frac{\lambda(\lambda-2)}{(1-\rho^2)^2}g(\rho).
  \end{equation}
Recall that the function $\rho\mapsto\frac{\rho}{1+\rho^2}$ solves
Eq.~\eqref{eq:specre} with $\lambda=1$. Thus,
\[ g_0(\rho):=(1-\rho^2)^{\frac12}\frac{\rho^2}{1+\rho^2} \]
satisfies
\[ g_0''(\rho)-V(\rho)g_0(\rho)=-\frac{1}{(1-\rho^2)^2}g_0(\rho). \]
Motivated by this, we rewrite Eq.~\eqref{eq:specg} as
\begin{equation}
  \label{eq:Schrod}
  g''(\rho)+\left[\frac{1}{(1-\rho^2)^2}-V(\rho)\right]g(\rho)=\frac{(\lambda-1)^2}{(1-\rho^2)^2}g(\rho).
  \end{equation}
This resembles a spectral problem for a Schr\"odinger
operator with a
ground state $g_0$.

At this point we digress and re-iterate that our mode
stability problem \emph{cannot} be reduced to studying the spectrum of
the self-adjoint realization of the 
Schr\"odinger operator in Eq.~\eqref{eq:Schrod}. The reason is
that an admissible eigenfunction of Eq.~\eqref{eq:spec} transforms
into a
solution of Eq.~\eqref{eq:Schrod} that behaves like
$(1-\rho)^{\frac{\lambda}{2}}$ near $\rho=1$. 
However, if $\Re\lambda\leq 1$, this function is not in
$L^2_w(0,1)$ with weight $w(\rho)=\frac{1}{(1-\rho^2)^2}$, which is
the natural Hilbert space for Eq.~\eqref{eq:Schrod}. As a consequence,
eigenvalues $\lambda$ with $\Re\lambda\leq 1$ are ``invisible'' in the
``self-adjoint picture'' of Eq.~\eqref{eq:Schrod}.

Nevertheless, we can employ the factorization procedure from
supersymmetric quantum mechanics, as this is in fact a pure ODE
argument that has nothing to do with operator theory. To this end, observe that $g_0$ has no zeros in $(0,1)$ and we have the factorization
\begin{align*}
  \left(\partial_\rho+\frac{g_0'(\rho)}{g_0(\rho)}\right)\left
    (\partial_\rho-\frac{g_0'(\rho)}{g_0(\rho)}\right)
  &=\partial_\rho^2-\partial_\rho\left(\frac{g_0'(\rho)}{g_0(\rho)}\right)-\frac{g_0'(\rho)^2}{g_0(\rho)^2}
    =\partial_\rho^2-\frac{g_0''(\rho)}{g_0(\rho)} \\
  &=\partial_\rho^2+\left [\frac{1}{(1-\rho^2)^2}-V(\rho)\right].
\end{align*}
Consequently, Eq.~\eqref{eq:Schrod} can be written as
\[ (1-\rho^2)^2\left(\partial_\rho+\frac{g_0'(\rho)}{g_0(\rho)}\right)
    \left [\left (\partial_\rho-\frac{g_0'(\rho)}{g_0(\rho)}\right)g(\rho)\right]=(\lambda-1)^2g(\rho). \]
The trick is now to apply the operator
$\partial_\rho-\frac{g_0'(\rho)}{g_0(\rho)}$ to this equation. In
terms of
\[ \widetilde g(\rho):=\left
    (\partial_\rho-\frac{g_0'(\rho)}{g_0(\rho)}\right)g(\rho), \]
the resulting equation reads
\[
  \left(\partial_\rho-\frac{g_0'(\rho)}{g_0(\rho)}\right)\left[(1-\rho^2)^2\left
    (\partial_\rho+\frac{g_0'(\rho)}{g_0(\rho)}\right)\widetilde
  g(\rho)\right]=(\lambda-1)^2\widetilde g(\rho). \]
Note that
\[ \left (\partial_\rho -
    \frac{g_0'(\rho)}{g_0(\rho)}\right)g_0(\rho)=0, \]
i.e., the solution that comes from the artificial instability gets
annihilated by this transformation.
Finally, we write $\widetilde
f(\rho)=\rho^{-1}(1-\rho^2)^{1-\frac{\lambda}{2}}\widetilde g(\rho)$
and the equation turns into
\begin{equation}
  \label{eq:specss}-(1-\rho^2)\widetilde
  f''(\rho)-\frac{2}{\rho}\widetilde f'(\rho)+2(\lambda+1)\rho
  \widetilde f'(\rho)+\frac{2(3-\rho^2)}{\rho^2(1+\rho^2)}\widetilde
  f(\rho)+\lambda(\lambda+1)\widetilde f(\rho)=0,
\end{equation}
which has the exact same structure as Eq.~\eqref{eq:spec} but with a
different ``potential''. Based on the above, we have the following
correspondence result.

\begin{lemma}
\label{lem:ss}
  Let $\lambda\in \C\setminus\{1\}$ and suppose that there exists a
  nontrivial $f\in C^\infty([0,1])$ that satisfies
  Eq.~\eqref{eq:spec}. Then there exists a nontrivial $\widetilde f\in
  C^\infty([0,1])$ that satisfies Eq.~\eqref{eq:specss}.
\end{lemma}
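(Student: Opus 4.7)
The plan is to follow the chain of transformations laid out in the preceding derivation,
\[
f \longmapsto g := \rho(1-\rho^2)^{\lambda/2}f \longmapsto \widetilde g := \Bigl(\partial_\rho - \tfrac{g_0'}{g_0}\Bigr)g \longmapsto \widetilde f := \rho^{-1}(1-\rho^2)^{1-\lambda/2}\widetilde g,
\]
and to verify the two properties that are not automatic from the formal calculation: that $\widetilde f$ lies in $C^\infty([0,1])$, and that $\widetilde f \not\equiv 0$. That $\widetilde f$ satisfies Eq.~\eqref{eq:specss} is the content of the algebra already carried out in the text, so no further work is needed there.

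For non-triviality, I would argue by contradiction. If $\widetilde g \equiv 0$ on $(0,1)$, then $g'/g = g_0'/g_0$, hence $g = c\,g_0$ for some constant $c$. But $g$ satisfies Eq.~\eqref{eq:Schrod} with parameter $\lambda$, whereas a direct check shows $g_0$ satisfies that equation only when $(\lambda-1)^2 = 0$. Since $\lambda \neq 1$, this forces $c = 0$, hence $g \equiv 0$, hence $f \equiv 0$, contradicting the hypothesis. As the prefactor $\rho^{-1}(1-\rho^2)^{1-\lambda/2}$ is nonvanishing on $(0,1)$, this also gives $\widetilde f \not\equiv 0$.

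For smoothness at $\rho = 0$, Frobenius analysis of Eq.~\eqref{eq:spec} yields indicial roots $\{1,-2\}$, so a smooth $f$ must vanish to order $1$ and hence $g$ vanishes to order $2$. Near $0$ one has $g_0(\rho) \sim \rho^2$ and therefore $g_0'/g_0 = 2/\rho + O(\rho)$, so $(g_0'/g_0)g$ is smooth and has the same $\rho^1$-coefficient as $g'$; this cancellation forces $\widetilde g$ to vanish to order at least $2$ at $0$, which is exactly what is needed for $\widetilde f = \rho^{-1}(1-\rho^2)^{1-\lambda/2}\widetilde g$ to extend smoothly across $\rho = 0$.

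Smoothness at $\rho = 1$ is the step I expect to carry the main difficulty, because the transformation involves the fractional powers $(1-\rho)^{\pm\lambda/2}$. Writing $g(\rho) = (1-\rho)^{\lambda/2}G(\rho)$ with $G(\rho) := \rho(1+\rho)^{\lambda/2}f(\rho)$ smooth near $1$, and using that $g_0'/g_0 = -\frac{1}{2(1-\rho)} + \psi(\rho)$ with $\psi$ smooth at $1$, a short computation yields
\[
\widetilde g(\rho) = (1-\rho)^{\lambda/2 - 1}\Bigl[\tfrac{1-\lambda}{2}G(\rho) + (1-\rho)H(\rho)\Bigr]
\]
for some $H$ smooth near $1$. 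Multiplying by $(1-\rho^2)^{1-\lambda/2} = (1-\rho)^{1-\lambda/2}(1+\rho)^{1-\lambda/2}$ then cancels the fractional power $(1-\rho)^{\lambda/2-1}$ \emph{exactly}, leaving a $\widetilde f$ that is smooth up to $\rho = 1$. This cancellation is where the supersymmetric structure does its work: the fractional singularity inherited by $\widetilde g$ from the simple pole of $g_0'/g_0$ is of precisely the order needed to absorb the fractional prefactor built into the definition of $\widetilde f$.
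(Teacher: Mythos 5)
Your proof is correct and follows essentially the same route as the paper: the same chain of substitutions $f\mapsto g\mapsto\widetilde g\mapsto\widetilde f$ built on the factorization by $g_0$, with the details the paper leaves to ``inspection'' (smoothness at $\rho=0$ and $\rho=1$, and the use of $\lambda\neq 1$ via $g=c\,g_0$ forcing $(\lambda-1)^2=0$ in Eq.~\eqref{eq:Schrod}) written out explicitly. In particular, the exact cancellation of the fractional powers of $(1-\rho)$ at $\rho=1$ that you highlight is the same cancellation that is manifest when one expands the paper's composite formula for $\widetilde f$.
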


\begin{proof}
  Given $f$, we set
  \[ \widetilde
    f(\rho):=\rho^{-1}(1-\rho^2)^{1-\frac{\lambda}{2}}\left
      (\partial_\rho-\frac{2-3\rho^2-\rho^4}{\rho(1-\rho^2)(1+\rho^2)}\right)\left
      [\rho(1-\rho^2)^{\frac{\lambda}{2}}f(\rho)\right] \]
  and since
  \[
    \frac{g_0'(\rho)}{g_0(\rho)}=\frac{2-3\rho^2-\rho^4}{\rho(1-\rho^2)(1+\rho^2)}, \]
the above derivation shows that $\widetilde f$ is nontrivial (here
$\lambda\not=1$ is used) and satisfies
Eq.~\eqref{eq:specss}. The fact that $\widetilde f\in
C^\infty([0,1])$ follows by inspection because $f(\rho)$ behaves
like $\rho$ near $0$ by Frobenius' method.
\end{proof}

\subsection{Transformation to standard Heun form}

Eq.~\eqref{eq:specss} is again of Heun type. To see this,
we first observe that the indicial polynomial of Eq.~\eqref{eq:specss}
at $\rho=0$ is $s(s-1)+2s-6$ with zeros $2$ and $-3$. At $\rho=1$ we
have, as with the original equation, $s(s-1)+\lambda s$ with zeros $0$
and $1-\lambda$. In order to obtain the standard Heun form, one of the
indices at each of the singular points must equal zero.
Thus, we introduce
new variables by writing $\widetilde f(\rho)=\rho^2 \widehat f(\rho^2)$.
Then $\widetilde f$ satisfies Eq.~\eqref{eq:specss} if and only if
$\widehat f$ satisfies the Heun equation
\begin{equation}
  \label{eq:specssHeun0}
  \widehat f''(x)+\left
    (\frac{7}{2x}+\frac{\lambda}{x-1}\right)\widehat
  f'(x)+\frac14\frac{(\lambda+3)(\lambda+2)x+\lambda^2+5\lambda-2}{x(x-1)(x+1)}\widehat
  f(x)=0.
\end{equation}
The domain we are interested in is $x\in [0,1]$ (which corresponds to
$\rho\in [0,1]$). However, as will become clear below, the fact that the singularity at
$x=-1$ has the same distance from $0$ as the singularity at $x=1$
spoils our analysis. For this reason, we need to move it, which is
possible by the M\"obius transform $x\mapsto \frac{2x}{x+1}$, which
maps $0$ to $0$, $1$ to $1$, $-1$ to $\infty$, and $\infty$ to
$2$. Then $1$ is the only singularity within distance $1$ from $0$. We
note that this transformation was introduced in the present context in
\cite{Biz05}. Upon
writing
\[ \widehat f(x)=\left
    (2-\frac{2x}{x+1}\right)^{1+\frac{\lambda}{2}}g\left
    (\frac{2x}{x+1}\right), \]
we finally arrive at the Heun equation
\begin{equation}
  \label{eq:specssHeun}
  g''(z)+\left
    (\frac{7}{2z}+\frac{\lambda}{z-1}+\frac{1}{2(z-2)}\right) g'(z)
  +\frac14\frac{(\lambda+4)(\lambda+2)z-(\lambda^2+12\lambda+12)}{z(z-1)(z-2)}g(z)=0.
\end{equation}
Tracing back the above derivation, we obtain the following lemma.

\begin{lemma}
  Let $\lambda\in \C\setminus\{1\}$ and suppose that there exists a
  nontrivial
  $f\in C^\infty([0,1])$ that satisfies Eq.~\eqref{eq:spec}. Then there
  exists a nontrivial $g\in C^\infty([0,1])$ that satisfies Eq.~\eqref{eq:specssHeun}.
\end{lemma}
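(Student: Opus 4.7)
The plan is to apply Lemma~\ref{lem:ss} to produce a nontrivial $\widetilde f\in C^\infty([0,1])$ satisfying Eq.~\eqref{eq:specss}, then carry out the two successive changes of variables sketched in the preceding paragraphs and verify that smoothness and nontriviality are preserved at each step.

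The first change of variable, $\widetilde f(\rho)=\rho^2\widehat f(\rho^2)$, requires knowing that $\widetilde f$ is an \emph{even} function of $\rho$. All coefficients of Eq.~\eqref{eq:specss} are even in $\rho$, so $\rho\mapsto \widetilde f(-\rho)$ is again a local smooth solution at $\rho=0$. By Theorem~\ref{thm:Frob}, the indicial polynomial $s(s-1)+2s-6=(s-2)(s+3)$ at $\rho=0$ forces every local smooth solution to lie in the one-dimensional space spanned by the Frobenius solution of index $2$: the alternative index $-3$ produces a genuine $\rho^{-3}$ blowup (possibly corrected by a $\rho^2\log\rho$ term), neither of which is smooth. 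Hence $\widetilde f(-\rho)=c\,\widetilde f(\rho)$, and comparison of leading coefficients yields $c=1$. Consequently there is a unique $\widehat f\in C^\infty([0,1])$ with $\widetilde f(\rho)=\rho^2\widehat f(\rho^2)$, and $\widehat f$ is nontrivial because $\widetilde f$ is. A straightforward chain-rule substitution then converts Eq.~\eqref{eq:specss} into Eq.~\eqref{eq:specssHeun0}; I would not reproduce the routine algebra.

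The second change of variable uses that $z=\frac{2x}{x+1}$ is a real-analytic diffeomorphism of $[0,1]$ onto itself fixing both endpoints, and that the prefactor $\bigl(\frac{2}{x+1}\bigr)^{1+\lambda/2}$ is smooth and nowhere vanishing on $[0,1]$. Defining $g$ by $\widehat f(x)=\bigl(\frac{2}{x+1}\bigr)^{1+\lambda/2}g\bigl(\frac{2x}{x+1}\bigr)$ therefore yields a nontrivial $g\in C^\infty([0,1])$, and another direct substitution (already performed in the main text) shows that $g$ satisfies Eq.~\eqref{eq:specssHeun}.

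The only step that is not pure bookkeeping is the evenness of $\widetilde f$, which rests on the one-dimensionality of the smooth local solution space at $\rho=0$. Once this is in place, the remaining argument is a sequence of routine substitutions together with the observation that both coordinate changes are smooth and invertible on the closed interval in question.
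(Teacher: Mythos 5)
Your proposal is correct and follows the same route as the paper, which simply asserts the lemma by ``tracing back'' the chain consisting of Lemma~\ref{lem:ss}, the substitution $\widetilde f(\rho)=\rho^2\widehat f(\rho^2)$, and the M\"obius change of variable. The one point the paper leaves implicit --- that the smooth solution of Eq.~\eqref{eq:specss} is even near $\rho=0$, so that it really is a smooth function of $\rho^2$ times $\rho^2$ --- you justify correctly via the reflection symmetry of the equation and the one-dimensionality of the smooth local solution space coming from the indicial roots $2$ and $-3$.
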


\subsection{The recurrence relation}

The indicial polynomial of Eq.~\eqref{eq:specssHeun} at $z=0$ is
$s(s-1)+\frac72s$ with zeros $0$ and $-\frac52$. At $z=1$ we have
$s(s-1)+\lambda s=0$ with zeros $0$ and $1-\lambda$. Thus, a solution
$g\in C^\infty([0,1])$ is holomorphic around both $z=0$ and
$z=1$. The ``next'' singularity in Eq.~\eqref{eq:specssHeun} is
at $z=2$ and thus, a solution $g\in C^\infty([0,1])$ is in fact
holomorphic on $\D_2$. Note that this line of reasoning would fail for
Eq.~\eqref{eq:specssHeun0} because of the singularity at $x=-1$.
Since the power series representation of a function that is
holomorphic on a disc converges on that very disc, we see that a
solution $g\in C^\infty([0,1])$ of Eq.~\eqref{eq:specssHeun} can be represented by a power series
centered at $z=0$ with radius of convergence at least $2$. Thus, the idea is
to insert a power series ansatz, obtain a recurrence relation for the
coefficients and then prove that the radius of convergence equals $1$
if $\Re\lambda\geq 0$. The reduction of the mode stability problem to
the convergence properties of the corresponding power series is from
\cite{Biz05}, which also provides convincing numerical evidence for mode
stability.

Concretely, from Frobenius' theory we know that
there exists a solution
\[ g(z)=\sum_{n=0}^\infty a_n z^n \]
to Eq.~\eqref{eq:specssHeun}, where the power series has radius of
convergence at least $1$. Thus,
\[ g'(z)=\sum_{n=0}^\infty (n+1)a_{n+1}z^n \]
and
\[ g''(z)=\sum_{n=0}^\infty (n+2)(n+1)a_{n+2}z^n. \]
By inserting this into Eq.~\eqref{eq:specssHeun},
rewritten as
\begin{align*} z(z-1)(z-2)g''(z)&+\left [\tfrac72(z-1)(z-2)+\lambda z(z-2)+\tfrac12
                                  z(z-1)\right]g'(z) \\
                                &+\tfrac14\left[(\lambda+4)(\lambda+2)z-(\lambda^2+12\lambda+12)\right]g(z)=0,
\end{align*}
we obtain
\begin{align*}
  0=&(z^3-3z^2+2z)\sum_{n=0}^\infty
        (n+2)(n+1)a_{n+2}z^n \\
  &+[(\lambda+4)z^2-(2\lambda+11)z+7]
    \sum_{n=0}^\infty (n+1)a_{n+1}z^n \\
  &+\tfrac14\left[(\lambda+4)(\lambda+2)z-(\lambda^2+12\lambda+12)\right]\sum_{n=0}^\infty a_nz^n
\end{align*}
and balancing the powers of $z$, we find
\begin{align*}
  0&=\sum_{n=-1}^\infty \left
     [7(n+2)a_{n+2}-\tfrac14(\lambda^2+12\lambda+12)a_{n+1}\right]z^{n+1}
  \\
  &\quad
    +\sum_{n=0}^\infty\left[2(n+2)(n+1)a_{n+2}-(2\lambda+11)(n+1)a_{n+1}+\tfrac14(\lambda+4)(\lambda+2)a_n\right]z^{n+1}
  \\
  &\quad +\sum_{n=1}^\infty \left
    [-3(n+1)na_{n+1}+(\lambda+4)na_n\right]z^{n+1}+\sum_{n=2}^\infty
    n(n-1)a_n z^{n+1}.
\end{align*}
By setting $a_{-1}=0$, we can start all sums at $n=-1$ and
we arrive at the recurrence relation
\begin{equation}
\label{eq:recurr}
a_{n+2}=A_n(\lambda)a_{n+1}+B_n(\lambda)a_n
\end{equation}
for $n\in \{-1\}\cup \N_0$ and with
\begin{align*}
  A_n(\lambda)&:=\frac{12n^2+(8\lambda+56)n+\lambda^2+20\lambda+56}{8n^2+52n+72} \\
  B_n(\lambda)&:=-\frac{4n^2+(4\lambda+12)n+\lambda^2+6\lambda+8}{8n^2+52n+72}.
\end{align*}
In order to start the recurrence, we choose the initial
condition $a_0=1$. This freedom comes from the fact that we are
solving a linear differential equation with a one-parameter family of
solutions.

\subsection{Properties of the coefficients}

As a first and easy observation we can now rule out the existence of
polynomial solutions.

\begin{lemma}
  \label{lem:nopoly}
  Let $\Re\lambda\geq 0$ and suppose that $g: \C\to\C$ is a polynomial
  that satisfies
  Eq.~\eqref{eq:specssHeun}. Then $g=0$.
\end{lemma}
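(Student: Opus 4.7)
The plan is to read off a strong constraint on $\lambda$ directly from the truncation of the recurrence \eqref{eq:recurr}. Suppose $g$ is a nonzero polynomial solution of degree $N\in\N_0$; then its Taylor coefficients at the origin satisfy $a_N\neq 0$ while $a_{N+1}=a_{N+2}=0$. Evaluating the recurrence at $n=N$, the vanishing of $a_{N+1}$ kills the $A_N(\lambda) a_{N+1}$ term and leaves
$$0 = a_{N+2} = B_N(\lambda)\, a_N.$$
Since $a_N\neq 0$, this will force $B_N(\lambda)=0$.

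The next step is to solve $B_N(\lambda)=0$ explicitly. The numerator of $B_N(\lambda)$, read as a quadratic in $\lambda$, is $\lambda^2+(4N+6)\lambda+(4N^2+12N+8)$, whose discriminant computes to $(4N+6)^2-4(4N^2+12N+8)=4$. Hence the roots are
$$\lambda = -(2N+2)\quad\text{and}\quad\lambda=-(2N+4),$$
both of which are strictly negative real numbers for every $N\in\N_0$. This contradicts the hypothesis $\Re\lambda\geq 0$, forcing $g=0$.

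I do not anticipate any real obstacle here: the argument is purely algebraic, and the discriminant of the relevant quadratic is the perfect square $4$, so the roots factor cleanly over $\Q$ and are manifestly negative. The only conceptual point worth keeping in mind is that the single equation $a_{N+2}=0$, combined with the automatic $a_{N+1}=0$, is already enough to pin $\lambda$ down; the further constraint $a_{N+1}=0$ would give an additional algebraic condition on $\lambda$, but we do not need to exploit it for this lemma. One should also verify that the denominator $8n^2+52n+72=4(2n+9)(n+2)$ of $B_n(\lambda)$ does not vanish for $n\in\{-1\}\cup\N_0$, which is immediate.
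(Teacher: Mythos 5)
Your argument is correct, and it rests on exactly the same computational fact as the paper's proof, namely that the numerator of $B_n(\lambda)$ factors as $(\lambda+2(n+1))(\lambda+2(n+2))$, so $B_n(\lambda)\neq 0$ whenever $\Re\lambda\geq 0$. The two proofs differ only in how this fact is deployed. The paper inverts the recurrence to express $a_n$ in terms of $a_{n+1}$ and $a_{n+2}$ (valid precisely because $B_n(\lambda)\neq 0$), and then runs a backward induction from the tail $a_n=0$, $n>N$, down to conclude $a_n=0$ for all $n$. You instead evaluate the forward recurrence once, at $n=N$: with $a_{N+1}=a_{N+2}=0$ this yields $B_N(\lambda)a_N=0$, and since $a_N\neq 0$ for a polynomial of exact degree $N$, you get $B_N(\lambda)=0$ directly, contradicting $\Re\lambda\geq 0$. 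Your version avoids the induction and the division by $B_n(\lambda)$ altogether, so it is slightly leaner; the paper's version has the mild advantage of producing the explicit conclusion ``all $a_n$ vanish'' rather than passing through a contradiction, but the content is the same. One small point you should make explicit: you tacitly assume $g\not\equiv 0$ in order to speak of the degree $N$ and of $a_N\neq 0$; this is harmless, since the case $g\equiv 0$ is the conclusion, but it is worth stating that the argument is by contradiction with a nonzero polynomial.
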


\begin{proof}
  Since $g$ is a polynomial, there exists an $N\in\N_0$ and
  coefficients $(a_n)_{n=0}^N$ such that
  \[ g(z)=\sum_{n=0}^N a_n z^n. \]
  Furthermore, by the above, the coefficients $a_n$ satisfy the
  recurrence relation Eq.~\eqref{eq:recurr}. Now observe that
  $B_n(\lambda)=0$ if and only if $\lambda\in\{-2(n+1), -2(n+2)\}$ and
  thus, $B_n(\lambda)\not=0$ for all $n\in\N_0$ and all $\lambda\in\C$
  with $\Re\lambda\geq 0$. This implies that
  \[
    a_n=-\frac{A_n(\lambda)}{B_n(\lambda)}a_{n+1}+\frac{1}{B_n(\lambda)}a_{n+2} \]
  for all $n\in\N_0$ and since $a_n=0$ for all $n>N$, we conclude that
  $a_n=0$ for all $n\in\N_0$. Consequently, $g=0$.
\end{proof}

Next, we turn to the asymptotic behavior of the coefficients. More
precisely, we are interested in the convergence radius of the series
$\sum_{n=0}^\infty a_n z^n$ and thus, we need to understand the
asymptotic behavior of the ratio $\frac{a_{n+1}}{a_n}$. To begin with,
we fix notation.

\begin{definition}
  For $\lambda\in \C$ the sequence $(a_n(\lambda))_{n\in\N_0}$ is
  defined recursively by $a_{-1}(\lambda)=0$, $a_0(\lambda)=1$, and
  \[
    a_{n+2}(\lambda)=A_n(\lambda)a_{n+1}(\lambda)+B_n(\lambda)a_n(\lambda) \]
  for $n\in \{-1\}\cup\N_0$.
\end{definition}

\begin{lemma}
  \label{lem:asym}
  Let $\Re\lambda\geq 0$.
  Then we have
  \[ \lim_{n\to\infty}\frac{a_{n+1}(\lambda)}{a_n(\lambda)}\in \{\tfrac12,1\}. \]
\end{lemma}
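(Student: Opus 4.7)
The plan is to invoke the Poincar\'e--Perron theorem on asymptotic ratios for second-order linear recurrences with asymptotically constant coefficients. First, I would compute
\[ \lim_{n\to\infty} A_n(\lambda) = \tfrac{3}{2}, \qquad \lim_{n\to\infty} B_n(\lambda) = -\tfrac{1}{2}, \]
so that the associated limiting characteristic polynomial $x^2 - \tfrac{3}{2}x + \tfrac{1}{2}$ factors as $(x-1)(x-\tfrac{1}{2})$, yielding roots $1$ and $\tfrac{1}{2}$ of \emph{distinct} moduli. The factorization of $B_n(\lambda)$ already used in the proof of Lemma \ref{lem:nopoly} shows that $B_n(\lambda)\ne 0$ for every $n\in\N_0$ when $\Re\lambda\ge 0$, so the recurrence is genuinely of order two.

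By the Perron refinement of Poincar\'e's theorem, \eqref{eq:recurr} then admits two linearly independent solutions $\{u_n\}$ and $\{v_n\}$, each eventually nonzero, with $u_{n+1}/u_n \to 1$ and $v_{n+1}/v_n \to \tfrac{1}{2}$. A routine Ces\`aro argument upgrades this to $u_n = e^{o(n)}$ and $v_n = 2^{-n}e^{o(n)}$, so in particular $v_n/u_n \to 0$. Since $\{a_n(\lambda)\}$ solves \eqref{eq:recurr}, I may write $a_n(\lambda) = c_1 u_n + c_2 v_n$ for some $c_1,c_2\in\C$; the case $c_1=c_2=0$ is excluded by $a_0(\lambda)=1\ne 0$. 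If $c_1\ne 0$, then $a_n = c_1 u_n\bigl(1+(c_2/c_1)\,v_n/u_n\bigr)$ is eventually nonzero and $a_{n+1}(\lambda)/a_n(\lambda) \to 1$. If $c_1 = 0$ and $c_2\ne 0$, then $a_n = c_2 v_n$ is eventually nonzero and $a_{n+1}(\lambda)/a_n(\lambda) \to \tfrac{1}{2}$. Either way, the limit exists and lies in $\{\tfrac{1}{2},1\}$.

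The main obstacle is essentially the careful bookkeeping needed to guarantee that $a_n(\lambda)$ is eventually nonzero so that the ratio is well-defined in the first place; this is subsumed in the Perron decomposition above, which provides the eventual nonvanishing of both basis solutions. An instructive alternative would be to analyze directly the one-dimensional nonlinear iteration $r_{n+1} = A_n(\lambda) + B_n(\lambda)/r_n$ for $r_n := a_{n+1}(\lambda)/a_n(\lambda)$. Its autonomous limit $F(r) = \tfrac{3}{2} - \tfrac{1}{2r}$ has an attracting fixed point at $1$ (with $F'(1)=\tfrac{1}{2}$) and a repelling one at $\tfrac{1}{2}$ (with $F'(\tfrac12)=2$), which explains both why the generic outcome is $a_{n+1}/a_n \to 1$ and why the exceptional sequences, corresponding to the stable manifold of the repeller, give $a_{n+1}/a_n \to \tfrac{1}{2}$; these exceptional cases are exactly those that will produce genuine eigenvalues in the analysis to follow.
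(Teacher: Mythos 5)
Your argument is correct, and it follows the same basic strategy as the paper (Poincar\'e--Perron theory applied to \eqref{eq:recurr} with limiting characteristic roots $\tfrac12$ and $1$), but it handles the crucial degenerate alternative differently. The paper quotes only the weaker Poincar\'e dichotomy (Theorem \ref{thm:Poincare}): either the ratio converges to a root, or $a_n(\lambda)=0$ for all large $n$; the second alternative is then excluded by the statement of Lemma \ref{lem:nopoly}, since an eventually vanishing coefficient sequence would produce a nonzero polynomial solution of \eqref{eq:specssHeun} with $\Re\lambda\geq 0$. You instead invoke Perron's refinement, which requires the nonvanishing of the trailing coefficient $B_n(\lambda)$ (exactly the computation inside the proof of Lemma \ref{lem:nopoly}) and yields a fundamental system $u_n, v_n$ with $u_{n+1}/u_n\to 1$, $v_{n+1}/v_n\to \tfrac12$; decomposing $a_n(\lambda)=c_1u_n+c_2v_n$, using $a_0(\lambda)=1$ to rule out $c_1=c_2=0$, and using $v_n/u_n\to 0$ then gives both the eventual nonvanishing of $a_n(\lambda)$ and the claimed limit. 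What your route buys is that the statement of Lemma \ref{lem:nopoly} is never needed (only the nonvanishing of $B_n$), and the well-definedness of the ratio is obtained directly from the basis representation; what it costs is the appeal to the stronger Perron theorem, whereas the paper deliberately keeps the difference-equation input minimal (the appendix only states and sketches the Poincar\'e version) and reuses the already established no-polynomial lemma. Both arguments are complete; your concluding remarks about the iteration $r_{n+1}=A_n+B_n/r_n$ and the dynamical interpretation of the two limits are heuristic asides consistent with the quasi-solution analysis that follows in the paper.
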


\begin{proof}
  We have
  \[ \lim_{n\to\infty}A_n(\lambda)=\tfrac32,\qquad
    \lim_{n\to\infty}B_n(\lambda)=-\tfrac12 \]
  and $s^2-\frac32 s+\tfrac12=0$ if and only if $s\in \{\frac12,1\}$.
  Consequently, by Poincar\'e's theorem on difference equations
  (Theorem \ref{thm:Poincare}) we either have
  \[ \lim_{n\to\infty}\frac{a_{n+1}(\lambda)}{a_n(\lambda)}\in \{\tfrac12,1\} \]
  or there exists an $N\in \N$ such that $a_n=0$ for all $n\geq N$,
  but the latter is ruled out by Lemma \ref{lem:nopoly}.
\end{proof}

If $\lim_{n\to\infty}\frac{a_{n+1}(\lambda)}{a_n(\lambda)}=\frac12$, the radius of
convergence of the series $\sum_{n=0}^\infty a_n(\lambda)z^n$ equals $2$ and in
particular,
$\sum_{n=0}^\infty a_n(\lambda)z^n$ is a solution to Eq.~\eqref{eq:specssHeun}
that belongs to $C^\infty([0,1])$. This is precisely the case we want to
rule out. Consequently, our goal is to show that
$\lim_{n\to\infty}\frac{a_{n+1}(\lambda)}{a_n(\lambda)}=1$.

Whenever $a_n(\lambda)\not=0$, we write $r_n(\lambda):=\frac{a_{n+1}(\lambda)}{a_n(\lambda)}$.
With this
notation, the recurrence relation Eq.~\eqref{eq:recurr} reads
\[
  r_{n+1}(\lambda)=\frac{a_{n+2}(\lambda)}{a_{n+1}(\lambda)}=\frac{A_n(\lambda)a_{n+1}(\lambda)+B_n(\lambda)a_n(\lambda)}{a_{n+1}(\lambda)}
  =A_n(\lambda)+\frac{B_n(\lambda)}{r_n(\lambda)} \]
for $n=0,1,2\dots$ and we keep in mind that this is only defined as long as $r_n(\lambda)\not=0$. Furthermore, we have the initial condition
\[
  r_0(\lambda)=\frac{a_1(\lambda)}{a_0(\lambda)}=a_1(\lambda)=A_{-1}(\lambda)a_0(\lambda)
  +B_{-1}(\lambda)a_{-1}(\lambda)=A_{-1}(\lambda)
  =\frac{\lambda^2+12\lambda+12}{28}. \]

\subsection{The quasi-solution}

Rephrased in terms of $r_n$, our goal is to show that
$\lim_{n\to\infty}r_n(\lambda)=1$ if $\Re\lambda\geq 0$. The idea is
now to achieve this by means of a \emph{quasi-solution}
\[ \widetilde
  r_n(\lambda):=\frac{\lambda^2}{8n^2+33n+28}+\frac{5\lambda}{5n+16}+\frac{5n+6}{5n+13},\qquad
  n\in\N, \]
which is supposed to approximate $r_n$ well enough. More precisely, we
will show that
\[ \left |\frac{r_n(\lambda)}{\widetilde r_n(\lambda)}-1\right|\leq
  \frac13 \]
for all $n\in\N$
and hence, we must have $\lim_{n\to\infty}r_n(\lambda)=1$ because by
Lemma \ref{lem:asym}, the
only other possibility is $\lim_{n\to\infty}r_n(\lambda)=\frac12$
which is not compatible with the above bound as
$\lim_{n\to\infty}\widetilde r_n(\lambda)=1$. In particular, this
estimate implies that $r_n(\lambda)\not=0$ for all $n\in\N$ and a
posteriori we see that the recursion for $r_n$ is defined for all
$n\in\N_0$. Note also that our line of reasoning provides the necessary
wiggle room for feasible estimates. Indeed, it is not
necessary to prove that
$\lim_{n\to\infty}\frac{r_n(\lambda)}{\widetilde r_n(\lambda)}=1$ directly,
which would be next to impossible.

The quasi-solution we use comes out of the blue and finding
it involves a bit of art indeed. However, there are some
rules of thumb. It is a natural first attempt to
look for a quasi-solution that is quadratic in $\lambda$ because both
$A_n(\lambda)$ and $B_n(\lambda)$ are quadratic polynomials in $\lambda$.
Then, with the help of a computer algebra system, one can look at the
first
few terms of the sequences $(r_n(0))_{n\in\N_0}$,
$(\frac12(r_n(1)-r_n(-1)))_{n\in\N_0}$, and
$(\frac12(r_n(1)-2r_n(0)+r_n(-1)))_{n\in\N_0}$ and fit simple rational
functions in $n$. Sometimes some additional tweaking is
necessary. Another approach is
based on a careful asymptotic analysis, see the corresponding
discussion in \cite{GloSch21}.

\begin{lemma}
  We have $\widetilde r_n(\lambda)\not=0$ for all $n\in\N$ and
  $\lambda\in\C$ with $\Re\lambda\geq 0$.
\end{lemma}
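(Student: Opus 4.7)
The plan is to reduce the non-vanishing statement to a simple sign check on the coefficients of a quadratic polynomial in $\lambda$. Since each denominator appearing in $\widetilde r_n(\lambda)$ is a strictly positive real number for every $n\in\N$ (namely $8n^2+33n+28$, $5n+16$, and $5n+13$), the equation $\widetilde r_n(\lambda)=0$ is equivalent, after multiplying through by the common denominator, to
\[
(5n+16)(5n+13)\lambda^2+5(8n^2+33n+28)(5n+13)\lambda+(8n^2+33n+28)(5n+16)(5n+6)=0.
\]
The point is that for $n\geq 1$ every one of the three coefficients is a strictly positive real number.

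Now I would simply invoke Vieta's formulas for this quadratic. The sum of the two (possibly complex) roots equals $-\tfrac{5(8n^2+33n+28)}{5n+16}$, which is strictly negative, while their product equals $\tfrac{(8n^2+33n+28)(5n+6)}{5n+13}$, which is strictly positive. If the roots are real, the positivity of the product forces them to have the same sign and the negativity of the sum then forces both to be negative. If instead the roots form a non-real complex conjugate pair $\alpha\pm i\beta$, their sum equals $2\alpha$, so $\alpha<0$. In either case, both roots of the quadratic satisfy $\Re\lambda<0$, and hence the equation $\widetilde r_n(\lambda)=0$ has no solution in the closed right half-plane $\{\Re\lambda\geq 0\}$, proving the lemma.

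There is essentially no real obstacle here — the only thing one must verify is that all three coefficients of the cleared quadratic are genuinely positive for every $n\in\N$, which is immediate by inspection. The reason this works so cleanly is that the ansatz for the quasi-solution was constructed with rational coefficients that are manifestly positive for $n\geq 1$, precisely so that this Vieta-style argument would be available.
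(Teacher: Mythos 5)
Your proof is correct: after clearing the strictly positive denominators, $\widetilde r_n(\lambda)=0$ becomes a quadratic in $\lambda$ with positive real coefficients, and your Vieta (Routh--Hurwitz) sign argument correctly places both roots, whether real or a conjugate pair, in the open left half-plane, so no zero has $\Re\lambda\geq 0$. This is essentially the approach the paper intends but omits (it notes only that the proof ``consists of solving a quadratic equation''); your only twist is reading off the root locations from the coefficient signs rather than invoking the quadratic formula explicitly.
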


\begin{proof}
  Since $\widetilde r_n(\lambda)$ is completely explicit, the proof
  consists of solving a quadratic equation and is hence omitted.
\end{proof}

\begin{corollary}
  \label{cor:contholo}
  Let $n\in\N$ and $\Omega:=\{z\in \C: \Re z>0\}$.
  Then the functions
  \[ r_1: \overline\Omega\to\C,\qquad \frac{1}{\widetilde r_n}: \overline\Omega\to\C \]
  are continuous and holomorphic on $\Omega$.
\end{corollary}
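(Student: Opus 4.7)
The plan is to reduce the claim to the observation that both objects are quotients of polynomials in $\lambda$ whose relevant denominators do not vanish on $\overline\Omega$. First I would unfold the recurrence at the lowest indices: from $a_{-1}(\lambda)=0$ and $a_0(\lambda)=1$ one reads off $a_1(\lambda)=A_{-1}(\lambda)=\frac{\lambda^2+12\lambda+12}{28}$ and then $a_2(\lambda)=A_0(\lambda)a_1(\lambda)+B_0(\lambda)$, which is again polynomial since $A_0$ and $B_0$ are (their denominators $8n^2+52n+72$ specialize at $n=0$ to the nonzero constant $72$). Hence $r_1=a_2/a_1$ is a rational function of $\lambda$, automatically holomorphic on any open set on which $a_1$ is nonvanishing and continuous on any closed set on which $a_1$ is nonvanishing.

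The only nontrivial step is therefore to verify that $a_1(\lambda)\neq 0$ on $\overline\Omega$, and this is immediate from the quadratic formula: the zeros of $\lambda^2+12\lambda+12$ are $\lambda=-6\pm 2\sqrt{6}$, both real and strictly negative because $2\sqrt{6}<\sqrt{36}=6$. Thus $a_1$ has no zeros in the closed right half-plane, and the regularity claims for $r_1$ follow.

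For $1/\widetilde r_n$ the argument is even shorter. For each fixed $n\in\N$, the quasi-solution $\widetilde r_n(\lambda)$ is a polynomial of degree two in $\lambda$ and hence entire, while the preceding lemma supplies exactly the fact that $\widetilde r_n$ has no zeros in $\overline\Omega$. Consequently $1/\widetilde r_n$ is meromorphic on $\C$ with no poles in $\overline\Omega$, which gives continuity on $\overline\Omega$ and holomorphy on $\Omega$. I do not anticipate any real obstacle; the only computation of substance is the discriminant check for $a_1$, and everything else is a routine invocation of the preceding lemma together with standard facts about rational functions.
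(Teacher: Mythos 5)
Your proof is correct and takes essentially the same route the paper intends (the paper treats the corollary as immediate): $r_1$ is an explicit rational function of $\lambda$ whose only denominator is $a_1(\lambda)=\frac{\lambda^2+12\lambda+12}{28}$, with zeros $-6\pm 2\sqrt{6}$ strictly in the left half-plane, while for fixed $n$ the quasi-solution $\widetilde r_n(\cdot)$ is a quadratic polynomial in $\lambda$ that is nonvanishing on $\overline\Omega$ by the preceding lemma. Nothing further is needed.
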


\begin{definition}
  For $\lambda\in\C$ with $\Re\lambda\geq 0$ and $n\in\N_0$, we set
  \[ \delta_n(\lambda):=\frac{r_n(\lambda)}{\widetilde
      r_n(\lambda)}-1 \]
  as well as
  \[ \epsilon_n(\lambda):=\frac{A_n(\lambda)\widetilde
      r_n(\lambda)+B_n(\lambda)}{\widetilde r_n(\lambda)\widetilde
      r_{n+1}(\lambda)}-1 \]
  and
  \[ C_n(\lambda):=\frac{B_n(\lambda)}{\widetilde
      r_n(\lambda)\widetilde r_{n+1}(\lambda)}. \]
\end{definition}

Note carefully that $\epsilon_n$ and $C_n$ are explicit.

\begin{lemma}
  Let $\lambda\in \C$ with $\Re\lambda\geq 0$.
  Then the functions $\delta_n$ satisfy the recurrence relation
  \[ \delta_{n+1}(\lambda)=\epsilon_n(\lambda)-C_n(\lambda)\frac{\delta_n(\lambda)}{1+\delta_n(\lambda)} \]
  for all $n=1,2,\dots$, as long as $1+\delta_n(\lambda)\not=0$.
\end{lemma}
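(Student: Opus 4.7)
The proof is a direct algebraic computation, so the plan is simply to organize the manipulations in the order that makes the roles of $\epsilon_n$ and $C_n$ emerge naturally. The starting point is the recurrence
\[ r_{n+1}(\lambda)=A_n(\lambda)+\frac{B_n(\lambda)}{r_n(\lambda)} \]
derived from Eq.~\eqref{eq:recurr}, together with the identity $r_n(\lambda)=(1+\delta_n(\lambda))\widetilde r_n(\lambda)$, which is the defining relation of $\delta_n$. Note that $1+\delta_n(\lambda)\neq 0$ is exactly the condition $r_n(\lambda)\neq 0$ needed to make the right-hand side of the recurrence legitimate; combined with the previous lemma that $\widetilde r_n(\lambda)\neq 0$, all the divisions below are justified.

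First, I would substitute $r_n=(1+\delta_n)\widetilde r_n$ into the recurrence and divide through by $\widetilde r_{n+1}$ to obtain
\[ 1+\delta_{n+1}(\lambda)=\frac{r_{n+1}(\lambda)}{\widetilde r_{n+1}(\lambda)}
  =\frac{A_n(\lambda)}{\widetilde r_{n+1}(\lambda)}
  +\frac{B_n(\lambda)}{(1+\delta_n(\lambda))\widetilde r_n(\lambda)\widetilde r_{n+1}(\lambda)}. \]
The second term is visibly $C_n(\lambda)/(1+\delta_n(\lambda))$ by the definition of $C_n$. For the first term, I would rearrange the definition of $\epsilon_n$ as
\[ \frac{A_n(\lambda)}{\widetilde r_{n+1}(\lambda)}
  =\frac{A_n(\lambda)\widetilde r_n(\lambda)+B_n(\lambda)}{\widetilde r_n(\lambda)\widetilde r_{n+1}(\lambda)}
  -\frac{B_n(\lambda)}{\widetilde r_n(\lambda)\widetilde r_{n+1}(\lambda)}
  =\bigl(1+\epsilon_n(\lambda)\bigr)-C_n(\lambda), \]
which is the only non-mechanical manipulation and is what motivates the particular definitions of $\epsilon_n$ and $C_n$.

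Plugging this into the previous display and subtracting $1$ from both sides yields
\[ \delta_{n+1}(\lambda)=\epsilon_n(\lambda)-C_n(\lambda)+\frac{C_n(\lambda)}{1+\delta_n(\lambda)}
  =\epsilon_n(\lambda)-C_n(\lambda)\,\frac{\delta_n(\lambda)}{1+\delta_n(\lambda)}, \]
which is the claimed identity. Since there is no analytic content, I do not anticipate any real obstacle; the only point that requires minor attention is bookkeeping the non-vanishing conditions ($\widetilde r_n(\lambda)\neq 0$ holds by hypothesis on $\lambda$, while $1+\delta_n(\lambda)\neq 0$ is assumed in the statement and is precisely what makes the recurrence $r_{n+1}=A_n+B_n/r_n$ well-defined at step $n$).
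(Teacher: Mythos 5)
Your proof is correct and follows the same route as the paper, which simply asserts that the identity follows ``straightforwardly'' by inserting the definition of $\delta_n$ into the recurrence $r_{n+1}=A_n+B_n/r_n$. You have supplied exactly the computation the paper has in mind, and your observation that $1+\delta_n(\lambda)\neq 0$ is equivalent to $r_n(\lambda)\neq 0$ (given $\widetilde r_n(\lambda)\neq 0$) correctly explains the stated caveat.
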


\begin{proof}
  This follows straightforwardly by inserting the definition of
  $\delta_n$ and by taking into account that $r_n$ satisfies the
  recurrence relation $r_{n+1}=A_n+\frac{B_n}{r_n}$.
\end{proof}

Next, we provide quantitative bounds on the functions in play.

\begin{lemma}
   \label{lem:deCbounds}
  We have the bounds
  \[ |\delta_1(it)|\leq \tfrac13,\qquad |\epsilon_n(it)|\leq
    \tfrac{1}{12},\qquad |C_n(it)|\leq \tfrac12 \]
  for all $n\in\N$ and all $t\in \R$.
\end{lemma}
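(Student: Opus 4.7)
The plan is to reduce each of the three inequalities to a purely elementary estimate on an explicit rational function of the real variable $t$ (and, in the second and third cases, of $n\in\N$), and then to verify that estimate by clearing denominators and showing non-negativity of the resulting polynomial.

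First, for $|\delta_1(it)|\le \tfrac13$, I would simply compute $r_1(\lambda)$ explicitly from $r_0(\lambda)=\frac{\lambda^2+12\lambda+12}{28}$ and $r_1(\lambda)=A_0(\lambda)+B_0(\lambda)/r_0(\lambda)$, plug in $\lambda=it$, and form the ratio with $\widetilde r_1(it)$, which is a completely explicit rational function of $t$. The inequality $|\delta_1(it)|\le\tfrac13$ is equivalent to $9|r_1(it)-\widetilde r_1(it)|^2\le|\widetilde r_1(it)|^2$, i.e.~to a polynomial inequality $P(t^2)\ge 0$ of bounded degree. Since there is only one value of $n$, this reduces to a one-variable polynomial inequality which I would verify by writing the polynomial as a sum of non-negative terms in $t^2$ (or by inspection of its coefficients).

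The estimate $|C_n(it)|\le\tfrac12$ is equivalent to $4|B_n(it)|^2\le|\widetilde r_n(it)|^2|\widetilde r_{n+1}(it)|^2$, and this again becomes, after clearing denominators, an inequality of the form $P_C(n,t^2)\ge 0$ for a polynomial $P_C$ with integer coefficients. The plan is to view $P_C$ as a polynomial in $t^2$ whose coefficients are polynomials in $n$, and to show that each of these coefficient polynomials is non-negative for every $n\in\N$. Because both $B_n$ and $\widetilde r_n$ are of very controlled form (rational with simple quadratic numerators and denominators linear in $n^2$), the coefficients of $P_C$ will in fact be polynomials of modest degree in $n$ with non-negative leading coefficient, and one checks positivity on $\N$ by evaluating at $n=1$ (and $n=2$ if necessary) and then using that the polynomial is increasing for $n$ beyond a small threshold. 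The same method handles $|\epsilon_n(it)|\le\tfrac{1}{12}$: one multiplies out $\epsilon_n(\lambda)=[A_n(\lambda)\widetilde r_n(\lambda)+B_n(\lambda)-\widetilde r_n(\lambda)\widetilde r_{n+1}(\lambda)]/[\widetilde r_n(\lambda)\widetilde r_{n+1}(\lambda)]$ and reduces $144|\epsilon_n(it)|^2\le 1$ to a polynomial inequality $P_\epsilon(n,t^2)\ge 0$ of the same flavor.

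The main obstacle is purely bookkeeping: the numerators of $B_n$, $\widetilde r_n$, and $A_n\widetilde r_n+B_n-\widetilde r_n\widetilde r_{n+1}$ are polynomials in both $\lambda$ and $n$ whose squared moduli at $\lambda=it$ produce polynomials in $t^2$ and $n$ of fairly high total degree, and one needs each coefficient (as a polynomial in $n$) to remain non-negative on the integers $n\ge 1$. In practice this is best done with the aid of a computer algebra system: I would expand $P_C$ and $P_\epsilon$ symbolically, collect coefficients in $t^2$, and verify non-negativity of each coefficient polynomial in $n$ either by writing it as a sum of monomials with non-negative coefficients (after substituting $n=m+1$ with $m\ge 0$, so that monotonicity in $m$ becomes manifest) or, for the finitely many coefficients that resist this, by checking base cases and using a derivative argument on $[1,\infty)$. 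The only genuinely interesting difficulty is arranging the polynomials so that the "sum of non-negative monomials in $n-1$" form can actually be exhibited; the inequalities $\tfrac13$, $\tfrac{1}{12}$, and $\tfrac12$ are chosen with precisely enough slack to make this possible.
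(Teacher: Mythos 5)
Your plan is correct and is essentially the approach the paper takes: the paper's own proof simply notes that these are elementary bounds on explicit rational expressions and defers the bookkeeping to \cite{CosDonGlo17}, where exactly your strategy is carried out (set $\lambda=it$, square the moduli, clear denominators, and verify non-negativity of the resulting polynomials in $t^2$ and $n$, with a shift like $n=m+1$ to make positivity of the coefficients manifest). The only remark worth adding is that the actual execution is pure computation of the kind you describe, so nothing conceptual is missing from your proposal.
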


\begin{proof}
  These are all bounds on explicit expressions and they can be proved
  by elementary means. For instance, we have
  \[ |C_n(it)|^2=\frac{P_n(t^2)}{Q_n(t^2)} \]
  for polynomials $P_n$ and $Q_n$. Consequently, the bound
  $|C_n(it)|\leq \frac12$ is equivalent to $Q_n(t^2)-4P_n(t^2)\geq 0$
  and the latter is trivially satisfied because the polynomial
  $Q_n(t^2)-4P_n(t^2)$ turns out to have only nonnegative coefficients
  for all $n\in\N$.
    We refer to
  \cite{CosDonGlo17} for more details.
\end{proof}

By the Phragm\'en-Lindel\"of principle, the bounds on the imaginary
axis extend to the whole complex right half-plane.

\begin{lemma}
  \label{lem:deCboundsfull}
   We have the bounds
  \[ |\delta_1(\lambda)|\leq \tfrac13,\qquad |\epsilon_n(\lambda)|\leq
    \tfrac{1}{12},\qquad |C_n(\lambda)|\leq \tfrac12 \]
  for all $n\in\N$ and all $\lambda\in\C$ with $\Re\lambda\geq 0$.
\end{lemma}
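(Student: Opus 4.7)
The plan is to apply the Phragm\'en--Lindel\"of principle on the right half-plane $\Omega$ separately to each of the three functions $\delta_1$, $\epsilon_n$, and $C_n$ (for every fixed $n\in\N$). This requires three ingredients: continuity on $\overline\Omega$, holomorphy on $\Omega$, and a subexponential growth bound. Once those are in place, the boundary estimates of Lemma \ref{lem:deCbounds} transfer directly from $i\R$ to the entire closed half-plane, yielding the claimed bounds.

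First I would verify regularity. By Corollary \ref{cor:contholo}, each map $1/\widetilde r_n$ is continuous on $\overline\Omega$ and holomorphic on $\Omega$, so the explicit rational expressions defining $\epsilon_n$ and $C_n$ inherit these properties. For $\delta_1 = r_1/\widetilde r_1 - 1$ I additionally need $r_1(\lambda) = A_0(\lambda) + B_0(\lambda)/r_0(\lambda)$ to have no singularity on $\overline\Omega$; since $r_0(\lambda) = (\lambda^2 + 12\lambda + 12)/28$ vanishes only at $\lambda = -6 \pm 2\sqrt{6}$, both strictly in the left half-plane, this is immediate.

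Next comes the growth control. Each of $\delta_1$, $\epsilon_n$, $C_n$ is a rational function of $\lambda$ whose denominator does not vanish on $\overline\Omega$, and a quick count of polynomial degrees shows each is in fact bounded there. For example $\deg B_n = 2$ while $\deg(\widetilde r_n\widetilde r_{n+1}) = 4$, so $C_n(\lambda) = O(|\lambda|^{-2})$ at infinity, while $\delta_1$ and $\epsilon_n$ tend to explicit finite constants. Boundedness is a fortiori subexponential, so Phragm\'en--Lindel\"of applies and propagates the bounds $\tfrac{1}{3}$, $\tfrac{1}{12}$, $\tfrac{1}{2}$ from the imaginary axis to all of $\overline\Omega$.

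The only real obstacle is the bookkeeping needed to rule out singularities of these rational functions on $\overline\Omega$, but this reduces to the nonvanishing of $\widetilde r_n$ (already supplied by the preceding lemma) together with the elementary root computation for $r_0$ above. No delicate analytic input beyond Lemma \ref{lem:deCbounds} is required.
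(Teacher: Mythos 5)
Your proposal is correct and follows essentially the same route as the paper: continuity and holomorphy on the closed right half-plane via Corollary \ref{cor:contholo} (plus nonvanishing of $\widetilde r_n$ and of $r_0$), a growth bound compatible with Lemma \ref{lem:PhrLin}, and then the Phragm\'en--Lindel\"of principle to propagate the boundary bounds of Lemma \ref{lem:deCbounds}. The only cosmetic difference is that you establish actual boundedness of $\delta_1$, $\epsilon_n$, $C_n$ by degree counting, whereas the paper simply notes that these rational functions obey the weaker subexponential bound $K_n e^{|\lambda|^{1/2}}$, which is all the principle requires.
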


\begin{proof}
  Let $n\in\N$.
  By construction and Corollary \ref{cor:contholo}, the functions $\delta_1, \epsilon_n, C_n$ are
  continuous on the closed complex right half-plane and holomorphic on
  the open right half-plane. Furthermore, since $\delta_1$,
  $\epsilon_n$, and $C_n$ are rational functions, there exists a $K_n>0$ such
  that
  \[ |\delta_1(\lambda)|+|\epsilon_n(\lambda)|+|C_n(\lambda)|\leq K_n
    e^{|\lambda|^\frac12} \]
  for all $\lambda\in\C$ with $\Re\lambda\geq 0$. Consequently, Lemma
  \ref{lem:deCbounds} and the Phragm\'en-Lindel\"of principle (Lemma
  \ref{lem:PhrLin}) yield the claim.
\end{proof}

Now we can conclude the proof of mode stability by a simple induction.

\begin{lemma}
  We have the bound
  \[ |\delta_n(\lambda)|\leq \tfrac13 \]
  for all $n\in\N$ and all $\lambda\in\C$ with $\Re\lambda\geq 0$.
\end{lemma}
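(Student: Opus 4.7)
The plan is to prove the bound by induction on $n$, using the recurrence
\[ \delta_{n+1}(\lambda)=\epsilon_n(\lambda)-C_n(\lambda)\frac{\delta_n(\lambda)}{1+\delta_n(\lambda)} \]
together with the quantitative bounds from Lemma \ref{lem:deCboundsfull}. The base case $n=1$ is exactly the first assertion of Lemma \ref{lem:deCboundsfull}, so no further work is required there.

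For the inductive step, fix $\lambda\in\C$ with $\Re\lambda\geq 0$ and suppose that $|\delta_n(\lambda)|\leq \tfrac13$. Then, by the reverse triangle inequality,
\[ |1+\delta_n(\lambda)|\geq 1-|\delta_n(\lambda)|\geq 1-\tfrac13=\tfrac23>0, \]
so in particular the recurrence is well defined at this step, and
\[ \left|\frac{\delta_n(\lambda)}{1+\delta_n(\lambda)}\right|\leq \frac{1/3}{2/3}=\tfrac12. \]
Applying the triangle inequality to the recurrence and invoking the bounds $|\epsilon_n(\lambda)|\leq \tfrac{1}{12}$ and $|C_n(\lambda)|\leq \tfrac12$ from Lemma \ref{lem:deCboundsfull}, I obtain
\[ |\delta_{n+1}(\lambda)|\leq |\epsilon_n(\lambda)|+|C_n(\lambda)|\left|\frac{\delta_n(\lambda)}{1+\delta_n(\lambda)}\right|\leq \tfrac{1}{12}+\tfrac12\cdot\tfrac12=\tfrac13, \]
which closes the induction.

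There is no real obstacle here; the entire content of the lemma was absorbed into the preceding estimates. The point worth underlining is simply that the numerical constants $\tfrac13$, $\tfrac{1}{12}$, and $\tfrac12$ of Lemma \ref{lem:deCbounds} are tuned so that the inequality $\tfrac{1}{12}+\tfrac12\cdot\tfrac{1/3}{1-1/3}=\tfrac13$ holds with equality, i.e.~$\tfrac13$ is precisely the fixed point of the map $x\mapsto \tfrac{1}{12}+\tfrac12\cdot\tfrac{x}{1-x}$ on $[0,\tfrac13]$. Once that algebra is checked, the induction goes through without incident and, combined with the earlier lemmas, completes the proof of Theorem \ref{thm:modstab}.
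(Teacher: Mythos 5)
Your proof is correct and follows essentially the same route as the paper: induction on $n$ with the base case supplied by Lemma \ref{lem:deCboundsfull}, and the inductive step using the recurrence for $\delta_{n+1}$ together with the bounds $|\epsilon_n(\lambda)|\leq\tfrac{1}{12}$, $|C_n(\lambda)|\leq\tfrac12$, and $|1+\delta_n(\lambda)|\geq\tfrac23$ to get $\tfrac{1}{12}+\tfrac12\cdot\tfrac12=\tfrac13$. Your added remark that $|1+\delta_n(\lambda)|\geq\tfrac23>0$ keeps the recurrence well defined is a welcome explicit observation that the paper leaves implicit.
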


\begin{proof}
  By Lemma \ref{lem:deCboundsfull}, the claim holds for $n=1$. Assuming
  that it holds for $n$, we find, again by Lemma \ref{lem:deCboundsfull},
  \[ |\delta_{n+1}(\lambda)|\leq
    |\epsilon_n(\lambda)|+|C_n(\lambda)|\frac{|\delta_n(\lambda)|}{1-|\delta_n(\lambda)|}
    \leq \tfrac{1}{12}+\tfrac12\frac{\frac13}{\frac23}=\tfrac13 \]
   and the claim follows inductively.
 \end{proof}

 This concludes the proof of mode stability and Theorem
 \ref{thm:modstab} is established.

\section{Functional analytic setup}

In this second part, we describe the functional analytic setup for studying the
stability of the wave maps blowup. In particular, we will see how the
mode stability problem embeds into a proper operator-theoretic framework
where it occurs as the effective spectral equation for the
nonself-adjoint operator that drives the linearized evolution near the
blowup solution.

We remark that our earlier papers, e.g.~\cite{Don11, DonSchAic12}, that
implemented this approach for the first time
focused on the evolution in the backward lightcone of the
singularity. However, it is also possible to treat the problem in the
whole space with basically no additional effort. Interestingly, the
challenging spectral problems are insensitive to this
modification. Furthermore, for
the full space problem Fourier methods become available that simplify
things considerably. In particular, the treatment of the free wave
evolution in similarity coordinates can be based on the standard wave
propagators and this is the approach we present here. A systematic study of blowup stability in the whole space was
recently developed in \cite{Glo22} with a slightly different approach
that does not make explicit use of the wave propagators but
relies on abstract semigroup theory instead.

\subsection{Wave propagators}

To begin with, we recall the standard wave propagators. Our convention
for the Fourier transform is
\[ (\mc F f)(y):=\int_{\R^d}e^{-2\pi i y\cdot x}f(x)dx, \]
initially defined on the Schwartz space $\mc S(\R^d)$ and by duality extended to the
tempered distributions $\mc S'(\R^d)$.

\begin{definition}
  We define the \emph{scalar gradient} by
  \[ |\nabla|f:=\mc F^{-1}\left(2\pi|\cdot|\mc Ff\right) \]
  for $f\in \mc S(\R^d)$.
\end{definition}

\begin{remark}
  Note that $|\nabla|^2f=-\Delta f$.
\end{remark}

\begin{definition}[Wave propagators]
  For $f\in \mc S(\R^d)$ we set
  \begin{align*}
    \cos(t|\nabla|)f&:=\mc F^{-1}(\cos(2\pi t|\cdot|)\mc Ff) \\
    \frac{\sin(t|\nabla|)}{|\nabla|}f&:=\mc F^{-1}\left (\frac{\sin(2\pi
                                      t|\cdot|)}{2\pi
                                       |\cdot|}\mc F f\right).
  \end{align*}
\end{definition}
Recall that if $f,g\in \mc S(\R^d)$,
\[ u(t,\cdot):=\cos(t|\nabla|)f+\frac{\sin(t|\nabla|)}{|\nabla|}g \]
is the unique solution of the Cauchy problem
\[
  \begin{cases}
    (\partial_t^2-\Delta_x)u(t,x)=0 & \mbox{ for }(t,x)\in
    \R\times\R^d \\
    u(0,x)=f(x),\quad \partial_0 u(0,x)=g(x) & \mbox{ for }x\in \R^d
  \end{cases}.
\]
Furthermore, recall the homogeneous Sobolev norms
\[ \|f\|_{\dot
    H^s(\R^d)}:=\||\nabla|^sf\|_{L^2(\R^d)}=\|(2\pi|\cdot|)^s\mc Ff\|_{L^2(\R^d)}, \qquad
  s> -\tfrac{d}{2},\quad f\in \mc S(\R^d). \]
The wave propagators behave very well with respect to these norms.

\begin{lemma}
  \label{lem:wp}
  Let $s\geq 0$. Then we have the bounds
  \begin{align*}
    \|\cos(t|\nabla|)f\|_{\dot H^s(\R^d)}&\leq \|f\|_{\dot H^s(\R^d)}
    \\
    \left \|\frac{\sin(t|\nabla|)}{|\nabla|}f\right
    \|_{\dot H^{s+1}(\R^d)}&\leq \|f\|_{\dot H^s(\R^d)}
  \end{align*}
  for all $t\in \R$ and $f\in \mc S(\R^d)$.
\end{lemma}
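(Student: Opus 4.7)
The plan is to prove both inequalities by a direct Plancherel / Fourier-multiplier calculation, exploiting the fact that by definition each propagator is simply the multiplier $\cos(2\pi t|\cdot|)$ or $\sin(2\pi t|\cdot|)/(2\pi|\cdot|)$ applied on the Fourier side, and the $\dot H^s$ norm is itself a weighted $L^2$ norm of the Fourier transform. So the whole argument reduces to pointwise bounds on the symbols.

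First, I would write out the $\dot H^s$ norm of $\cos(t|\nabla|)f$ using the definition given in the excerpt:
\begin{equation*}
\|\cos(t|\nabla|)f\|_{\dot H^s(\R^d)}^2 = \int_{\R^d}(2\pi|y|)^{2s}\bigl|\cos(2\pi t|y|)\bigr|^2\bigl|\mc Ff(y)\bigr|^2\,dy.
\end{equation*}
Since $|\cos(2\pi t|y|)|\leq 1$ for every $y\in\R^d$ and every $t\in\R$, the integrand is pointwise bounded by $(2\pi|y|)^{2s}|\mc Ff(y)|^2$, and integrating gives $\|f\|_{\dot H^s(\R^d)}^2$. This proves the first estimate.

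Next, for the sine propagator, the factor $|\nabla|^{-1}$ in the definition is precisely what converts the $\dot H^{s+1}$ norm on the left-hand side into an $\dot H^s$-type estimate on the right. Explicitly,
\begin{equation*}
\left\|\frac{\sin(t|\nabla|)}{|\nabla|}f\right\|_{\dot H^{s+1}(\R^d)}^2
= \int_{\R^d}(2\pi|y|)^{2(s+1)}\left|\frac{\sin(2\pi t|y|)}{2\pi|y|}\right|^2\bigl|\mc Ff(y)\bigr|^2\,dy
= \int_{\R^d}(2\pi|y|)^{2s}\bigl|\sin(2\pi t|y|)\bigr|^2\bigl|\mc Ff(y)\bigr|^2\,dy,
\end{equation*}
and again $|\sin(2\pi t|y|)|\leq 1$ yields the bound by $\|f\|_{\dot H^s(\R^d)}^2$.

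There is essentially no obstacle; the assumption $s\geq 0$ ensures all quantities are well-defined (in particular, $\dot H^s$ is a bona fide norm on Schwartz functions for this range, and the weight $(2\pi|y|)^{2s}$ is locally integrable near the origin, so the passage through the $|\nabla|^{-1}$ in the sine propagator does not create a singularity after the cancellation). Since $f\in\mc S(\R^d)$ everything is manifestly finite and no approximation argument is needed.
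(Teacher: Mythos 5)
Your proof is correct and is exactly the argument the paper has in mind: the paper dismisses the lemma as "just an application of Plancherel's theorem," and your calculation spells out precisely that, passing to the Fourier side and using the pointwise bounds $|\cos(2\pi t|y|)|\leq 1$ and $|\sin(2\pi t|y|)|\leq 1$.
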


\begin{proof}
  The proof is just an application of Plancherel's theorem.
\end{proof}

\subsection{The wave propagators in similarity coordinates}

Next, we switch to the similarity coordinates
\[ \tau=-\log(T-t)+\log T,\qquad \xi=\frac{x}{T-t} \]
or
\[ t=T-Te^{-\tau},\qquad x=Te^{-\tau}\xi. \]
We consider the coordinate range $\tau\geq 0$ and $\xi\in \R^d$. The
solution to the wave equation in similarity coordinates is then given
by the wave propagators in similarity coordinates.

\begin{definition}
  For $f\in \mc S(\R^d)$, $\tau\geq 0$, and $T>0$, we set
  \begin{align*}
    [C_{T}(\tau)f](\xi)&:=[\cos((T-Te^{-\tau})|\nabla|)f](Te^{-\tau}\xi) \\
    [S_{T}(\tau)f](\xi)&:=\left [\frac{\sin((T-Te^{-\tau})|\nabla|)}{|\nabla|}f\right](Te^{-\tau}\xi).
  \end{align*}
\end{definition}

The crucial observation now is the fact that the wave propagators in
similarity coordinates decay exponentially, provided one takes
sufficiently many derivatives.

\begin{lemma}
  \label{lem:decay}
  Let $s\geq 0$. Then we have the bounds
  \begin{align*}
    \|C_T(\tau)f\|_{\dot H^s(\R^d)}&\leq
                                     T^{-\frac{d}{2}+s}e^{(\frac{d}{2}-s)\tau}\|f\|_{\dot H^s(\R^d)} \\
    \|S_T(\tau)f\|_{\dot H^{s+1}(\R^d)}&\leq  T^{-\frac{d}{2}+s+1}e^{(\frac{d}{2}-s-1)\tau}\|f\|_{\dot H^s(\R^d)}
  \end{align*}
  for all $\tau\geq 0$, $T>0$, and $f\in \mc S(\R^d)$.
\end{lemma}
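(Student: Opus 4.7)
The plan is to reduce the bound to Lemma \ref{lem:wp} by factoring off the spatial dilation $\xi\mapsto Te^{-\tau}\xi$. Writing $a:=Te^{-\tau}$, we have $[C_T(\tau)f](\xi)=g_\tau(a\xi)$ with $g_\tau:=\cos((T-Te^{-\tau})|\nabla|)f$, and analogously $[S_T(\tau)f](\xi)=h_\tau(a\xi)$ with $h_\tau:=\frac{\sin((T-Te^{-\tau})|\nabla|)}{|\nabla|}f$. So the whole point is to understand how the $\dot H^s$-norm transforms under this rescaling.

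First I would establish the scaling identity
\begin{equation*}
\|g(a\,\cdot)\|_{\dot H^s(\R^d)} = a^{s-\frac{d}{2}}\,\|g\|_{\dot H^s(\R^d)}
\end{equation*}
for $a>0$, $s\geq 0$, and Schwartz $g$. This is a one-line Plancherel computation: the Fourier transform of $g(a\,\cdot)$ is $a^{-d}\widehat g(\cdot/a)$, and a change of variables in $\int (2\pi|\eta|)^{2s}|a^{-d}\widehat g(\eta/a)|^2\,d\eta$ produces the factor $a^{2s-d}$.

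Second, I would combine this identity with Lemma \ref{lem:wp}. For the cosine propagator, taking $g=g_\tau$ and $a=Te^{-\tau}$ gives
\begin{equation*}
\|C_T(\tau)f\|_{\dot H^s(\R^d)} = (Te^{-\tau})^{s-\frac{d}{2}}\|g_\tau\|_{\dot H^s(\R^d)}\leq (Te^{-\tau})^{s-\frac{d}{2}}\|f\|_{\dot H^s(\R^d)},
\end{equation*}
which rearranges to the first claimed bound. For the sine propagator, I would apply the scaling identity at the level $s+1$ and then invoke the second inequality in Lemma \ref{lem:wp}, obtaining
\begin{equation*}
\|S_T(\tau)f\|_{\dot H^{s+1}(\R^d)} = (Te^{-\tau})^{s+1-\frac{d}{2}}\|h_\tau\|_{\dot H^{s+1}(\R^d)}\leq (Te^{-\tau})^{s+1-\frac{d}{2}}\|f\|_{\dot H^s(\R^d)}.
\end{equation*}

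There is no real obstacle here; the lemma is essentially a bookkeeping statement that tracks how the dilation in similarity coordinates interacts with the homogeneity of $\dot H^s$. The only thing to be careful about is the direction of the inequality in the scaling exponent (positive vs.\ negative powers of $e^{-\tau}$), which is automatically correct once one writes $a=Te^{-\tau}$ and keeps track of signs. The content that makes this useful is the gain of one derivative in the sine term, which shifts the exponent by one and is what ultimately yields exponential decay once $s$ is chosen strictly greater than $d/2-1$.
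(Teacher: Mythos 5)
Your proof is correct and is exactly the argument the paper has in mind: the paper's proof is the one-line remark that the lemma ``follows by a simple scaling argument combined with Lemma \ref{lem:wp}'', and your Plancherel computation of the dilation identity $\|g(a\,\cdot)\|_{\dot H^s(\R^d)}=a^{s-\frac{d}{2}}\|g\|_{\dot H^s(\R^d)}$ with $a=Te^{-\tau}$, followed by Lemma \ref{lem:wp}, is precisely that argument spelled out. The exponents match the statement, so nothing further is needed.
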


\begin{proof}
  This follows by a simple scaling argument combined with Lemma \ref{lem:wp}.
\end{proof}

\subsection{Back to the wave maps equation}

Now we return to the wave maps equation Eq.~\eqref{eq:wmcor},
\begin{equation}
  \label{eq:corwm2}
  \left
    (\partial_t^2-\partial_r^2-\frac{2}{r}\partial_r\right)u(t,r)+\frac{\sin(2u(t,r))}{r^2}=0.
\end{equation}
Despite its appearance, this is not a standard
radial nonlinear wave equation because of the singularity at $r=0$. A Taylor
expansion shows that smooth solutions of this equation must vanish at
$r=0$. This observation motivates the introduction of the new variable
$\widetilde v(t,r):=\frac{u(t,r)}{r}$. In terms of $\widetilde v$,
Eq.~\eqref{eq:corwm2} reads
\[ \left (\partial_t^2 -\partial_r^2 -\frac{4}{r}\partial_r
  \right)\widetilde v(t,r)+\frac{\sin(2r\widetilde
    v(t,r))-2r\widetilde v(t,r)}{r^3}=0. \]
This is now a proper radial semilinear wave equation with a smooth
nonlinearity (observe the cancellation in the numerator) but in 5
rather than 3 spatial dimensions. Thus, it is natural to formulate the
problem in terms of the function $v(t,x):=\widetilde v(t,|x|)$, where
$v(t,\cdot)$ is a radial function on $\R^5$. This leads to the
equation
\[ (\Box v)(t,x)+\frac{\sin(2|x|v(t,x))-2|x|v(t,x)}{|x|^3}=0, \]
where
\[ (\Box v)(t,x):=(\partial_t^2-\Delta_x)v(t,x) \]
denotes the \emph{d'Alembertian} or \emph{wave operator}.
In terms of the function $\widetilde
w(\tau,\xi):=v(T-Te^{-\tau},Te^{-\tau}\xi)$, we obtain
\[ T^{-2}e^{2\tau} \widetilde \Box_{\tau,\xi}\widetilde w(\tau,\xi)+
\frac{\sin(2Te^{-\tau}|\xi|\widetilde w(\tau,\xi))-2Te^{-\tau}|\xi|\widetilde
  w(t,x)}{T^3e^{-3\tau}|\xi|^3}=0, \]
where $T^{-2}e^{2\tau}\widetilde \Box_{\tau,\xi}$ is the wave operator
in similarity coordinates, i.e.,
\[ T^{-2}e^{2\tau}\widetilde \Box_{\tau,\xi}\widetilde w(\tau,\xi)=(\Box
  v)(T-Te^{-\tau},Te^{-\tau}\xi). \]
Explicitly, we have
\[
  \widetilde\Box_{\tau,\xi}=\partial_\tau^2+2\xi^j\partial_{\xi^j}\partial_\tau-(\delta^{jk}-\xi^j\xi^k)\partial_{\xi^j}\partial_{\xi^k}+\partial_\tau+2\xi^j\partial_{\xi^j}.\ \]
Note that the coefficients of
  $\widetilde\Box_{\tau,\xi}$ are independent of $\tau$.
In order to obtain an autonomous equation, we switch to the variable
$w(\tau,\xi):=Te^{-\tau}\widetilde w(\tau,\xi)$. This leads to
\begin{equation}
  \label{eq:wmssc}
  e^{-\tau}\widetilde\Box_{\tau,\xi}\left (e^\tau w(\tau,\xi)\right)
  +\frac{\sin(2|\xi|w(\tau,\xi))-2|\xi|w(\tau,\xi)}{|\xi|^3}=0.
\end{equation}
Note that
\[ e^{-\tau}\widetilde\Box_{\tau,\xi}(e^\tau w(\tau,\xi))=\left
    [\partial_\tau^2+2\xi^j\partial_{\xi^j}\partial_\tau-(\delta^{jk}-\xi^j\xi^k)\partial_{\xi^j}\partial_{\xi^k}
    +3\partial_\tau+4\xi^j\partial_{\xi^j}+2\right]w(\tau,\xi) \]
and thus, the parameter $T$ does not occur and may be formally set to
$1$. Consequently, the solution of
\[ e^{-\tau}\widetilde\Box_{\tau,\xi}(e^\tau w(\tau,\xi))=0 \]
is given by
\[ e^\tau w(\tau,\cdot)=C_1(\tau)w(0,\cdot)+S_1(\tau)[\partial_0
  w(0,\cdot)+(\cdot)^j\partial_j w(0,\cdot)+w(0,\cdot)] \]
since $(\partial_0
v)(0,\xi)=[e^\tau\partial_\tau+e^\tau\xi^j\partial_{\xi^j}](e^\tau
w(\tau,\xi))|_{\tau=0}$. Note carefully that we gain an additional
factor of decay.

Recall that we have the static solution
\[ w_*(\xi):=\frac{2}{|\xi|}\arctan(|\xi|) \]
which we want to perturb. Thus, we plug in the ansatz
$w(\tau,\xi)=w_*(\xi)+\varphi(\tau,\xi)$
and obtain the equation
\begin{equation}
  \label{eq:wmsscpert}
  e^{-\tau}\widetilde\Box_{\tau,\xi}\left (e^\tau
    \varphi(\tau,\xi)\right)
  +\frac{2|\xi|\cos(2|\xi|w_*(\xi))\varphi(\tau,\xi)-2|\xi|\varphi(\tau,\xi)}{|\xi|^3}+N(\varphi(\tau,\xi),\xi)=0,
\end{equation}
where
\[
  N(y,\xi):=\frac{\sin(2|\xi|(w_*(\xi)+y))-\sin(2|\xi|w_*(\xi))-2|\xi|\cos(2|\xi|w_*(\xi))y}{|\xi|^3}. \]
Note that $N(y,\xi)$ is quadratic in $y$ and smooth in $\xi$.

\subsection{Semigroup formulation}

By introducing the variable
\[ \Phi(\tau)(\xi)=
  \begin{pmatrix}
    \varphi(\tau,\xi) \\
    (\partial_\tau+\xi^j\partial_{\xi^j}+1)\varphi(\tau,\xi)
  \end{pmatrix}, \]
Eq.~\eqref{eq:wmsscpert} can be written as the first-order system
\begin{equation}
  \label{eq:wmsscpertSG}
  \partial_\tau \Phi(\tau)=\widehat{\mb L}_0\Phi(\tau)+\mb L'\Phi(\tau)+\mb
  N(\Phi(\tau)),
\end{equation}
with the \emph{formal} differential operator
\[ \widehat{\mb L}_0\begin{pmatrix}f_1\\f_2\end{pmatrix}:=
  \begin{pmatrix}
    -\Lambda f_1-f_1+f_2 \\
    \Delta f_1-\Lambda f_2-2f_2
  \end{pmatrix},\qquad (\Lambda f)(\xi):=\xi^j\partial_{\xi^j}f(\xi),
\]
and
\[ \left[\mb L'
  \begin{pmatrix}
    f_1 \\ f_2
  \end{pmatrix}\right](\xi):=
\begin{pmatrix}0 \\ -\frac{2\cos(2|\xi|w_*(\xi))-2}{|\xi|^2}f_1(\xi)
\end{pmatrix}
=\begin{pmatrix}0 \\ \frac{16}{(1+|\xi|^2)^2}f_1(\xi)
\end{pmatrix},
\]
as well as
\[ \mb N\left (
    \begin{pmatrix}
      f_1 \\ f_2
    \end{pmatrix}
    \right )(\xi):=
    \begin{pmatrix}
      0 \\ -N(f_1(\xi),\xi)
    \end{pmatrix}.
  \]
  By construction, the solution of the Cauchy problem
  $\partial_\tau\Phi(\tau)=\widehat{\mb L}_0\Phi(\tau)$, $\Phi(0)=\mb
  f=(f_1,f_2)\in \mc S(\R^d)\times \mc S(\R^d)$, is given by
  \begin{align*} \Phi(\tau)(\xi)&=
    \begin{pmatrix}
      e^{-\tau}[C_1(\tau)f_1](\xi)+e^{-\tau}[S_1(\tau)f_2](\xi) \\
      (\partial_\tau+\xi^j\partial_{\xi^j}+1)[e^{-\tau}C_1(\tau)f_1](\xi)+(\partial_\tau+\xi^j\partial_{\xi^j}+1)[e^{-\tau}S_1(\tau)f_2](\xi)
    \end{pmatrix} \\
    &=:[\mb S_0(\tau)\mb f](\xi).
  \end{align*}
  Note that
  \begin{align*}
    \partial_\tau [C_1(\tau)f](\xi)
    &=\partial_\tau
      \left[\cos((1-e^{-\tau})|\nabla|)f\right](e^{-\tau}\xi)
    \\
    &=-e^{-\tau}\left[\sin((1-e^{-\tau})|\nabla|)|\nabla|f\right](e^{-\tau}\xi)
      -e^{-\tau}\xi^j\partial_j\left[\cos((1-e^{-\tau})|\nabla|)f\right](e^{-\tau}\xi)
    \\
    &=e^{-\tau}\left [S_1(\tau)\Delta f\right](\xi)
      -\xi^j\partial_{\xi^j}\left[C_1(\tau)f\right](\xi)
  \end{align*}
  and thus,
  \begin{align*}
    (\partial_\tau+\xi^j\partial_{\xi^j}+1)\left
    [e^{-\tau}C_1(\tau)f\right ](\xi)=e^{-2\tau}\left
    [S_1(\tau)\Delta f\right](\xi).
  \end{align*}
  Analogously,
  \[
    (\partial_\tau+\xi^j\partial_{\xi^j}+1)\left
      [e^{-\tau}S_1(\tau)f\right ](\xi)=e^{-2\tau}[C_1(\tau)f](\xi) \]
  and this yields the representation
  \[ \mb S_0(\tau)\mb f=
    \begin{pmatrix}
      e^{-\tau}C_1(\tau)f_1+e^{-\tau}S_1(\tau)f_2 \\
      e^{-2\tau}S_1(\tau)\Delta f_1+ e^{-2\tau}C_1(\tau)f_2
    \end{pmatrix}. \]
  Consequently, by Lemma \ref{lem:decay}, we obtain the bound
  \[ \|\mb S_0(\tau)\mb f\|_{\dot H^s(\R^d)\times \dot H^{s-1}(\R^d)}\lesssim
   e^{(\frac{d}{2}-1-s)\tau}\|\mb f\|_{\dot H^s(\R^d)\times \dot
     H^{s-1}(\R^d)} \]
 for all $\mb f\in \mc S(\R^d)\times \mc S(\R^d)$, $\tau\geq 0$, and any $s\geq 0$.
 In particular, $\mb S_0$ extends to a semigroup on
 $\mc H:=(\dot H^2(\R^5)\times \dot H^1(\R^5))\cap (\dot H^4(\R^5)\times
 \dot H^3(\R^5))$ with operator norm satisfying
 \[ \|\mb S_0(\tau)\|_{\mc H}\lesssim e^{-\frac12\tau} \]
 for all $\tau\geq 0$.
 Furthermore, it is a simple exercise to show that the map
 $\tau\mapsto \mb S_0(\tau)\mb f: [0,\infty)\to \mc H$ is continuous
 for any $\mb f\in\mc H$
 and the whole abstract machinery of semigroup theory applies.
 In view of the nonlinear problem, it is also crucial that we have an
 $L^\infty$-embedding of \emph{intersection Sobolev spaces} which implies that $\mc H$ is a Banach algebra:

 \begin{lemma}
   \label{lem:alg}
  Let $0\leq s<\frac{d}{2}<t$ and $H:=\dot H^s(\R^d)\cap \dot
  H^t(\R^d)$. Then we have the Sobolev embedding $H\hookrightarrow C(\R^d)\cap
  L^\infty(\R^d)$ and
  \[ \|fg\|_H\lesssim \|f\|_H\|g\|_H \]
  for all $f,g\in \mc S(\R^d)$.
\end{lemma}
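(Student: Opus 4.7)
The plan is to establish both assertions via Fourier-analytic arguments, using the Plancherel identity together with a splitting of the frequency domain into low and high frequencies determined by the two different exponents $s$ and $t$.

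For the embedding $H \hookrightarrow C(\R^d) \cap L^\infty(\R^d)$, I would first show that $\widehat f \in L^1(\R^d)$ whenever $f \in \mathcal S(\R^d)$ has finite $H$-norm, by splitting
\[ \int_{\R^d} |\widehat f(\xi)|\, d\xi = \int_{|\xi|\leq 1} (2\pi|\xi|)^{-s} (2\pi|\xi|)^s |\widehat f(\xi)|\, d\xi + \int_{|\xi|>1} (2\pi|\xi|)^{-t} (2\pi|\xi|)^t |\widehat f(\xi)|\, d\xi. \]
Applying Cauchy--Schwarz to each piece, the first bracket is finite because $2s<d$ makes $|\xi|^{-2s}$ integrable at the origin, and the second is finite because $2t>d$ makes $|\xi|^{-2t}$ integrable at infinity. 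This yields $\|\widehat f\|_{L^1} \lesssim \|f\|_{\dot H^s} + \|f\|_{\dot H^t} \simeq \|f\|_H$. Fourier inversion then gives both $\|f\|_{L^\infty} \lesssim \|f\|_H$ and the continuity of $f$ via Riemann--Lebesgue applied to $\widehat f \in L^1$. The bound extends to all of $H$ by the density of $\mathcal S(\R^d)$ in $H$.

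For the algebra property, the strategy is to reduce the two seminorms $\|fg\|_{\dot H^s}$ and $\|fg\|_{\dot H^t}$ to bilinear estimates that are controlled by the $L^\infty$ bound just established. Concretely, I would invoke the fractional Leibniz (Kato--Ponce) inequality
\[ \||\nabla|^\sigma(fg)\|_{L^2} \lesssim \|f\|_{L^\infty} \||\nabla|^\sigma g\|_{L^2} + \|g\|_{L^\infty} \||\nabla|^\sigma f\|_{L^2} \]
valid for $\sigma \geq 0$, and apply it at $\sigma = s$ and $\sigma = t$. Combining each resulting estimate with the embedding $\|f\|_{L^\infty} \lesssim \|f\|_H$ gives
\[ \|fg\|_{\dot H^s} + \|fg\|_{\dot H^t} \lesssim \|f\|_H\bigl(\|g\|_{\dot H^s}+\|g\|_{\dot H^t}\bigr) + \|g\|_H\bigl(\|f\|_{\dot H^s}+\|f\|_{\dot H^t}\bigr) \lesssim \|f\|_H \|g\|_H, \]
which is the claimed algebra bound. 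The only nontrivial ingredient is the Kato--Ponce inequality, which I would cite rather than reprove; everything else reduces to the frequency-splitting computation of the first paragraph. The principal obstacle, if one wishes to be self-contained, is avoiding Kato--Ponce: in that case one would perform a Littlewood--Paley decomposition $fg = \sum_j S_{j-3}f\,\Delta_j g + \sum_j \Delta_j f\, S_{j-3}g + \sum_{|j-k|\leq 2}\Delta_j f\,\Delta_k g$ and handle the paraproducts and the resonant part separately using the $L^\infty$ embedding of $H$, but the streamlined Kato--Ponce route is what I would adopt.
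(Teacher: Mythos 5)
Your argument is correct. Note, however, that the paper offers no proof to compare against: Lemma \ref{lem:alg} is stated with ``Left as an exercise,'' so the only question is whether your solution is sound, and it is — the low/high frequency splitting with Cauchy--Schwarz (using $2s<d$ near the origin and $2t>d$ at infinity) gives $\|\widehat f\|_{L^1}\lesssim\|f\|_H$, hence the $L^\infty$ bound and continuity, and Kato--Ponce at $\sigma=s$ and $\sigma=t$ (with $\sigma=0$ degenerating to H\"older when $s=0$) closes the algebra estimate. One remark: citing Kato--Ponce is heavier machinery than the exercise requires. Having already shown $\|\widehat f\|_{L^1}\lesssim\|f\|_H$, you can finish self-containedly on the Fourier side: since $\widehat{fg}=\widehat f*\widehat g$ and $|\xi|^\sigma\lesssim|\xi-\eta|^\sigma+|\eta|^\sigma$ for every $\sigma\geq 0$, Young's inequality ($L^1*L^2\subset L^2$) gives
\[ \||\nabla|^\sigma(fg)\|_{L^2(\R^d)}\lesssim \||\nabla|^\sigma f\|_{L^2(\R^d)}\|\widehat g\|_{L^1(\R^d)}+\|\widehat f\|_{L^1(\R^d)}\||\nabla|^\sigma g\|_{L^2(\R^d)}, \]
which applied at $\sigma=s,t$ yields $\|fg\|_H\lesssim\|f\|_H\|g\|_H$ with no paraproducts and no external bilinear estimate. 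This is presumably the intended route, and it makes your first paragraph do all the work; your Kato--Ponce (or Littlewood--Paley) variant is nevertheless perfectly valid.
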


\begin{proof}
  Left as an exercise.
\end{proof}

\subsection{Spectral analysis of the generator}

By construction,
 \[ \partial_\tau \mb S_0(\tau)\mb f=\widehat{\mb L}_0\mb S_0(\tau)\mb f \]
 for all $\mb f\in \mc S(\R^5)\times \mc S(\R^5)=:\mc D(\widehat{\mb L}_0)$ and thus,
the generator of $\mb S_0$ is the closure $\mb L_0: \mc D(\mb
L_0)\subset \mc H\to\mc H$ of the operator $\widehat{\mb L}_0: \mc D(\widehat{\mb
L}_0)\subset \mc H\to\mc H$. Since $\mb L_0$ is an abstract object, we
need the following auxiliary result in order to get our hands on the
spectral problem for $\mb L_0$ and its perturbations.

\begin{lemma}
  \label{lem:dist}
  Let $\mb f\in \mc D(\mb L_0)$ and set $\mb g:=(g_1,g_2):=\mb L_0 \mb f$. Then
  $\mb f=(f_1,f_2)$ satisfies
  \begin{align*}
    g_1(\xi)&=-\xi^j\partial_{\xi^j}f_1(\xi)-
              f_1(\xi)+f_2(\xi) \\
    g_2(\xi)&=\Delta f_1(\xi)-\xi^j\partial_{\xi^j}f_2(\xi)-2f_2(\xi)
  \end{align*}
  in the sense of distributions.
\end{lemma}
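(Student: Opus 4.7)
The plan is to invoke the definition of the closure and then pass to the distributional limit. By definition of $\mb L_0$ as the closure of $\widehat{\mb L}_0$, the membership $\mb f\in \mc D(\mb L_0)$ means there exists a sequence $(\mb f_n)_{n\in\N}\subset \mc S(\R^5)\times\mc S(\R^5)$ with $\mb f_n\to \mb f$ and $\widehat{\mb L}_0 \mb f_n\to \mb g$, both limits in $\mc H$. For Schwartz data the action of $\widehat{\mb L}_0$ is classical, so writing $\mb f_n=(f_{1,n},f_{2,n})$ and $\widehat{\mb L}_0\mb f_n=(g_{1,n},g_{2,n})$ we have the pointwise identities
\begin{align*}
  g_{1,n}(\xi)&=-\xi^j\partial_{\xi^j}f_{1,n}(\xi)-f_{1,n}(\xi)+f_{2,n}(\xi), \\
  g_{2,n}(\xi)&=\Delta f_{1,n}(\xi)-\xi^j\partial_{\xi^j}f_{2,n}(\xi)-2 f_{2,n}(\xi).
\end{align*}

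Next, I would transfer these identities to $\mc D'(\R^5)$ by pairing against an arbitrary $\phi\in C_c^\infty(\R^5)$ and integrating by parts to move every derivative of $f_{1,n}$, $f_{2,n}$ onto $\phi$; for instance, $\int\xi^j\partial_{\xi^j}f_{i,n}\,\phi= -\int f_{i,n}\bigl(5\phi+\xi^j\partial_{\xi^j}\phi\bigr)$ and $\int \Delta f_{1,n}\,\phi=\int f_{1,n}\,\Delta\phi$. This reduces the claim, in the limit, to proving that $f_{i,n}\to f_i$ and $g_{i,n}\to g_i$ in $\mc D'(\R^5)$. The key observation is that $\mc H$ embeds continuously into $L^p_{\mathrm{loc}}(\R^5)$ for some $p>1$: for the first component, Lemma \ref{lem:alg} applied with $d=5$, $s=2$, $t=4$ gives $\dot H^2(\R^5)\cap \dot H^4(\R^5)\hookrightarrow L^\infty(\R^5)$; for the second component, $\dot H^1(\R^5)\hookrightarrow L^{10/3}(\R^5)$ by Sobolev embedding. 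Therefore convergence in $\mc H$ implies convergence in $L^1_{\mathrm{loc}}(\R^5)\times L^1_{\mathrm{loc}}(\R^5)$, and in particular in $\mc D'(\R^5)\times \mc D'(\R^5)$.

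With distributional convergence in hand, each linear differential operator appearing on the right-hand side (multiplication by $1$ or $\xi^j$, and the partial derivatives $\partial_{\xi^j}$, $\Delta$) is continuous on $\mc D'(\R^5)$, so passing $n\to\infty$ in the paired identities yields
\begin{align*}
  \langle g_1,\phi\rangle &= \langle -\xi^j\partial_{\xi^j}f_1-f_1+f_2,\phi\rangle, \\
  \langle g_2,\phi\rangle &= \langle \Delta f_1-\xi^j\partial_{\xi^j}f_2-2f_2,\phi\rangle,
\end{align*}
for every $\phi\in C_c^\infty(\R^5)$, which is exactly the asserted distributional identity.

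The only real subtlety, and what I would regard as the main obstacle, is justifying the passage of $\mc H$-convergence to distributional convergence; this is where the homogeneous Sobolev norms could in principle be insensitive to constants or low-frequency content. However, the intersection structure of $\mc H$ (two scales for each component) combined with Lemma \ref{lem:alg} sidesteps this by producing a genuine $L^\infty$ (respectively $L^p$) control on bounded sets, so the argument goes through cleanly.
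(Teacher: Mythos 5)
Your proposal is correct and follows essentially the same route as the paper: approximate $\mb f$ by Schwartz data using the definition of the closure, write the classical identities for $\widehat{\mb L}_0\mb f_n$, test against $C_c^\infty(\R^5)$, integrate by parts, and pass to the limit using a Sobolev embedding of $\mc H$ into a local Lebesgue space (the paper uses $\mc H\hookrightarrow L^\infty(\R^5)\times L^\infty(\R^5)$ via the intersection structure, while you use $\dot H^1(\R^5)\hookrightarrow L^{10/3}(\R^5)$ for the second component, an inessential variation).
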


\begin{proof}
  Let $\mb f\in \mc D(\mb L_0)$. By definition of the closure, there
  exists a sequence $(\mb f_n)_{n\in\N}\subset \mc D(\widehat{\mb L}_0)=\mc
  S(\R^5)\times \mc S(\R^5)$ such that $\lim_{n\to\infty}\|\mb f_n-\mb
  f\|_{\mc H}=0$ and $\lim_{n\to\infty}\|\mb g_n-\mb g\|_{\mc H}=0$,
  where $\mb g_n:=\widehat{\mb L}_0 \mb f_n$. By definition,
  \begin{align*}
    g_{1n}(\xi)&=-\xi^j\partial_{\xi^j}f_{1n}(\xi)-
              f_{1n}(\xi)+f_{2n}(\xi) \\
    g_{2n}(\xi)&=\Delta f_{1n}(\xi)-\xi^j\partial_{\xi^j}f_{2n}(\xi)-2f_{2n}(\xi),
  \end{align*}
  where $\mb f_n=(f_{1n}, f_{2n})$ and $\mb
  g_n=(g_{1n},g_{2n})$. Consequently, by testing these equations with
  a function in $C^\infty_c(\R^5)$, integrating by parts, and taking
  the limit $n\to\infty$, the claim follows thanks to the Sobolev
  embedding $\mc H\hookrightarrow L^\infty(\R^5)\times L^\infty(\R^5)$.
\end{proof}

Next, we turn to the full linear operator $\mb L:=\mb L_0+\mb L'$ with
$\mc D(\mb L):=\mc D(\mb L_0)$ as it
occurs in Eq.~\eqref{eq:wmsscpertSG}. Here, we have a nice compactness property.

\begin{lemma}
  \label{lem:compact}
  The operator $\mb L': \mc H\to \mc H$ is compact.
\end{lemma}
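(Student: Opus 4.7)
The plan is to exploit the very simple form of $\mb{L}'$: it sends $(f_1,f_2)$ to $(0,Vf_1)$ with the smooth, rapidly decaying potential $V(\xi):=16/(1+|\xi|^2)^2$. Hence compactness of $\mb{L}':\mc{H}\to\mc{H}$ reduces to showing that the scalar multiplication operator
\[ M_V:\dot H^2(\R^5)\cap\dot H^4(\R^5)\to\dot H^1(\R^5)\cap\dot H^3(\R^5) \]
is compact. Boundedness of $M_V$ is automatic from the Leibniz rule and the algebra property of Lemma \ref{lem:alg}, since $V$ and all its derivatives are Schwartz functions.

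Next I would proceed by the standard cutoff-and-tail decomposition. Fix a smooth bump $\chi_R\in C^\infty_c(\R^5)$ with $\chi_R\equiv 1$ on the ball $B_R$ and support in $B_{2R}$, and split $V=V_R^{\mathrm{in}}+V_R^{\mathrm{out}}$ with $V_R^{\mathrm{in}}:=\chi_R V$. For the tail, repeated application of Leibniz together with Lemma \ref{lem:alg} yields
\[ \|V_R^{\mathrm{out}}f_1\|_{\dot H^1\cap \dot H^3}\lesssim \max_{|\alpha|\leq 3}\|\partial^\alpha V_R^{\mathrm{out}}\|_{L^\infty(\R^5)}\,\|f_1\|_{\dot H^2\cap \dot H^4}, \]
and the prefactor tends to zero as $R\to\infty$ because each derivative of $V$ decays at least like $|\xi|^{-4}$ at infinity. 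Consequently $M_V$ is the operator-norm limit of $M_{V_R^{\mathrm{in}}}$ as $R\to\infty$.

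For each fixed $R$, compactness of $M_{V_R^{\mathrm{in}}}$ follows from Rellich--Kondrachov. Any bounded sequence $(f_{1n})$ in $\dot H^2\cap\dot H^4$ is bounded in $H^4(B_{2R})$; the missing $L^2(B_{2R})$ piece is controlled through the Sobolev embedding $\dot H^2(\R^5)\hookrightarrow L^{10/3}(\R^5)$ restricted to a bounded set. Since $H^4(B_{2R})$ embeds compactly into $H^3(B_{2R})$, a subsequence converges strongly there; multiplying by the smooth, compactly supported $V_R^{\mathrm{in}}$ produces a sequence converging in $H^3(\R^5)$ with uniformly bounded support, hence converging in $\dot H^1\cap \dot H^3$. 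A norm limit of compact operators being compact, the proof concludes.

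The main obstacle is the product-rule bookkeeping at the top regularity $\dot H^3$: one must distribute up to three derivatives across $V\cdot f_1$ and place each factor in either $L^2$ or $L^\infty$ so that the decay of $V$ can be extracted while $f_1$ is controlled by the $\dot H^2\cap\dot H^4$ norm. The intersection structure of $\mc{H}$ together with Lemma \ref{lem:alg} handle every mixed term, but verifying that the tail prefactor really dominates all such combinations uniformly is the one place where genuine care is needed.
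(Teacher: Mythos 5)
Your overall strategy is the same as the paper's: split $V$ into a compactly supported piece plus a small tail, use the compact Rellich embedding for the local piece, and conclude by a limiting/diagonal argument (operator-norm limit of compact operators versus the paper's diagonal subsequence extraction — both are fine). The gap is in the tail estimate, which is the one genuinely analytic point of the lemma. The displayed bound
\[ \|V_R^{\mathrm{out}}f_1\|_{\dot H^1\cap \dot H^3}\lesssim \max_{|\alpha|\leq 3}\|\partial^\alpha V_R^{\mathrm{out}}\|_{L^\infty(\R^5)}\,\|f_1\|_{\dot H^2(\R^5)\cap \dot H^4(\R^5)} \]
does not follow from Leibniz, and is false as a general inequality for bounded multipliers: after distributing derivatives you face terms such as $\|(\partial^\alpha V_R^{\mathrm{out}})\,f_1\|_{L^2}$ and $\|V_R^{\mathrm{out}}\,\nabla f_1\|_{L^2}$, and the space $\dot H^2(\R^5)\cap\dot H^4(\R^5)$ controls neither $\|f_1\|_{L^2}$ nor $\|\nabla f_1\|_{L^2}$ (take $V_R^{\mathrm{out}}$ replaced by the constant $1$, whose derivatives are all bounded: the claimed bound would then assert $\|f_1\|_{\dot H^1\cap\dot H^3}\lesssim\|f_1\|_{\dot H^2\cap\dot H^4}$, which fails at low frequencies). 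For the same reason, boundedness of $M_V$ is not ``automatic from Lemma \ref{lem:alg}'': that lemma is an algebra estimate within a single intersection space, and $f_1$ need not belong to $\dot H^1(\R^5)\cap\dot H^3(\R^5)$.

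To close the gap, the spatial decay of $V$ has to enter the estimate quantitatively, not merely through sup norms. The paper does this with Hardy's inequality: pointwise one has $|\partial^\alpha[(1-\chi_k)V](\xi)|\lesssim k^{-1}|\xi|^{-2-|\beta|}$ for the relevant splittings, and then
\[ \big\||\cdot|^{-2}f_1\big\|_{L^2(\R^5)}+\big\||\cdot|^{-1}|\nabla|f_1\big\|_{L^2(\R^5)}\lesssim \|f_1\|_{\dot H^2(\R^5)}, \]
which converts the weights into derivatives of $f_1$ that the norm of $\mc H$ does control, yielding the $k^{-1}$ smallness of the tail. An equivalent repair in your framework is to replace the $L^\infty$ prefactors by suitable $L^p$ norms of $\partial^\alpha V_R^{\mathrm{out}}$ and pair them, via H\"older, with the homogeneous embeddings $\dot H^2(\R^5)\hookrightarrow L^{10}(\R^5)$ and $\dot H^1(\R^5)\hookrightarrow L^{10/3}(\R^5)$ (note also that your stated embedding $\dot H^2(\R^5)\hookrightarrow L^{10/3}$ has the wrong exponent, though this is harmless for the local Rellich step, which is correct as you set it up).
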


\begin{proof}
  By definition, we have
  \[ \mb L'
    \begin{pmatrix}
      f_1 \\ f_2
    \end{pmatrix}
    =
    \begin{pmatrix}
      0 \\ Vf_1
    \end{pmatrix},\qquad V(\xi):=\frac{16}{(1+|\xi|^2)^2}.
  \]
  Let $(\mb f_n)_{n\in\N}$ be a bounded sequence
  in $\mc H$, where we write $\mb f_n=(f_{1n},f_{2n})$.
  Furthermore, let $\chi: \R^5\to [0,1]$ be a smooth cut-off that satisfies
  $\chi(\xi)=1$ if $|\xi|\leq 1$ and $\chi(\xi)=0$ if $|\xi|\geq 2$.
  By Lemma \ref{lem:alg}, we have
  \begin{align*} \|\mb L'\mb f_n\|_{\mc H}&=\|Vf_{1n}\|_{\dot H^3(\R^5)\cap \dot
      H^1(\R^5)}\lesssim
    \|\chi_kVf_{1n}\|_{H^3(\R^5)}+\|(1-\chi_k)Vf_{1n}\|_{\dot H^1(\R^5)\cap
                                            \dot H^3(\R^5)} \\
    &\lesssim \|f_{1n}\|_{H^3(\B^5_{2k})}+\|(1-\chi_k)Vf_{1n}\|_{\dot
      H^1(\R^5)}
      +\|(1-\chi_k)Vf_{1n}\|_{\dot H^2(\R^5)\cap
                                            \dot H^4(\R^5)}
    \end{align*}
for all $n,k\in\N$, where $\chi_k(\xi):=\chi(\frac{\xi}{k})$. Now we
employ Hardy's inequality (see e.g.~\cite{MusSch13}, p.~243, Theorem
  9.5) and the decay of $V$ to obtain the bound
\begin{align*}
  \|(1-\chi_k)Vf_{1n}\|_{\dot H^1(\R^5)}
  &\lesssim
  \||\nabla|[(1-\chi_k)V]f_{1n}\|_{L^2(\R^5)}
    +\|(1-\chi_k)V|\nabla|f_{1n}\|_{L^2(\R^5)} \\
  &\lesssim
    k^{-1}\||\cdot|^{-2}f_{1n}\|_{L^2(\R^5)}+k^{-1}\||\cdot|^{-1}|\nabla|f_{1n}\|_{L^2(\R^5)}
  \\
  &\lesssim k^{-1}\|f_{1n}\|_{\dot H^2(\R^5)}.
\end{align*}
Combined with the above, this leads to the estimate
\[ \|\mb L'\mb f_n\|_{\mc H}\lesssim
  \|\mb f_n\|_{H^3(\B_{2k}^5)\times H^2(\B_{2k}^5)}+k^{-1}\|\mb
  f_n\|_{\mc H}\lesssim \|\mb f_n\|_{H^3(\B_{2k}^5)\times H^2(\B_{2k}^5)}+k^{-1} \]
for all $n,k\in\N$.
By the Sobolev embedding $\mc H\hookrightarrow L^\infty(\R^5)\times
L^\infty(\R^5)$
and H\"older's inequality, we
obtain
\[ \mc H\hookrightarrow H^4_{\mathrm{loc}}(\R^5)\times
  H^3_{\mathrm{loc}}(\R^5) \hookrightarrow H^3_{\mathrm{loc}}(\R^5)\times
  H^2_{\mathrm{loc}}(\R^5) \]
and by the compactness of the latter embedding, there exists, for each
$k\in \N$, a subsequence $(\mb f_{k,n})_{n\in\N}$ of $(\mb
f_n)_{n\in\N}$ that converges in $H^3(\B^5_{2k})\times H^2(\B^5_{2k})$
and such that $(\mb f_{k+1,n})_{n\in\N}$ is a subsequence of $(\mb
f_{k,n})_{n\in\N}$.
In particular, there exists an $N(k)\in \N$ such that
\[ \|\mb f_{k,m}-\mb f_{k,n}\|_{H^3(\B^5_{2k})\times
    H^2(\B^5_{2k})}\leq k^{-1} \]
for all $m,n\geq N(k)$. Clearly, we may choose $k\mapsto N(k):
\N\to\N$ to be monotonically increasing.
We set $\mb g_k:=\mb f_{k,N(k)}$ for $k\in\N$.
Then $(\mb g_k)_{k\in\N}$ is a subsequence of $(\mb f_n)_{n\in\N}$ and
we have
\begin{align*}
  \|\mb L'\mb g_{k+\ell}-\mb L'\mb g_k\|_{\mc H}&\lesssim \|\mb
  g_{k+\ell}-\mb g_k\|_{H^3(\B^5_{2k})\times H^2(\B^5_{2k})}+k^{-1} \\
  &=\|\mb
  f_{k+\ell, N(k+\ell)}-\mb f_{k,N(k)}\|_{H^3(\B^5_{2k})\times
    H^2(\B^5_{2k})}+k^{-1} \\
  &\lesssim k^{-1}
\end{align*}
for all $k,\ell\in\N$ because $(\mb f_{k+\ell,n})_{n\in\N}$ is a
subsequence of $(\mb f_{k,n})_{n\in\N}$. Consequently, $(\mb L'\mb
g_k)_{k\in\N}$ is a Cauchy sequence in $\mc H$ and this proves the claim.
\end{proof}

As a consequence of the bound $\|\mb S_0(\tau)\|_{\mc H}\lesssim
e^{-\frac12 \tau}$, we see that the \emph{free resolvent}
\[ \mb R_{\mb L_0}(\lambda):=(\lambda\mb I-\mb L_0)^{-1}=\int_0^\infty
  e^{-\lambda\tau}\mb S_0(\tau)d\tau \]
exists provided that $\Re\lambda>-\frac12$.
By the \emph{Birman-Schwinger principle}, i.e., the identity
\[ \lambda\mb I-\mb L=[\mb I-\mb L'\mb R_{\mb
    L_0}(\lambda)](\lambda\mb I-\mb L_0), \]
it follows that $\lambda\mb I-\mb L$ is bounded invertible for
$\Re\lambda>-\frac12$ if and only if $\mb I-\mb L'\mb R_{\mb
  L_0}(\lambda)$ is bounded invertible.
This observation leads to the important result that possible spectral
points $\lambda$ of $\mb L$ with $\Re\lambda\geq-\frac14$, say, are confined to
a compact region. In order to formulate the exact statement, we define
\[ \Gamma_R:=\{z\in\C: \Re z> -\tfrac14, |z|> R\} \]
for $R>0$.

\begin{lemma}
\label{lem:speccpt}
  There exists an $R>0$ such that
  $\sigma(\mb L)\cap \Gamma_R=\emptyset$.
  Furthermore, we have
  \[ \sup_{\lambda\in\Gamma_R}\|(\lambda\mb I-\mb L)^{-1}\|_{\mc
      H}<\infty. \]
\end{lemma}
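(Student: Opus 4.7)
The plan is to exploit the Birman--Schwinger factorization
\[ \lambda\mb I-\mb L=\bigl[\mb I-\mb L'\mb R_{\mb L_0}(\lambda)\bigr](\lambda\mb I-\mb L_0) \]
recorded in the excerpt: once I show that $\|\mb L'\mb R_{\mb L_0}(\lambda)\|_{\mc H}\leq\tfrac12$ uniformly on $\Gamma_R$ for $R$ large, the factor in brackets is invertible by a Neumann series with norm bounded by $2$, and the identity
\[ (\lambda\mb I-\mb L)^{-1}=\mb R_{\mb L_0}(\lambda)\bigl(\mb I-\mb L'\mb R_{\mb L_0}(\lambda)\bigr)^{-1} \]
yields both $\Gamma_R\subset\rho(\mb L)$ and the desired uniform bound. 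The uniform boundedness of $\mb R_{\mb L_0}(\lambda)$ on the closed half-plane $\{\Re\lambda\geq-\tfrac14\}$ is immediate from the Laplace representation given in the excerpt together with $\|\mb S_0(\tau)\|_{\mc H}\lesssim e^{-\tau/2}$, which yields $\|\mb R_{\mb L_0}(\lambda)\|_{\mc H}\lesssim 4$ in that region.

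The heart of the proof is therefore to establish that
\[ \|\mb L'\mb R_{\mb L_0}(\lambda)\|_{\mc H}\longrightarrow 0 \qquad\text{as } |\lambda|\to\infty,\ \Re\lambda\geq -\tfrac14. \]
I would first show that $\mb R_{\mb L_0}(\lambda)\to 0$ \emph{strongly} in this region: for $\mb f\in\mc D(\mb L_0)$, integration by parts in the Laplace integral (equivalently, the identity $\mb R_{\mb L_0}(\lambda)\mb f=\lambda^{-1}\mb f+\lambda^{-1}\mb R_{\mb L_0}(\lambda)\mb L_0\mb f$) gives $\|\mb R_{\mb L_0}(\lambda)\mb f\|_{\mc H}=O(|\lambda|^{-1})$, and density of $\mc D(\mb L_0)$ together with the uniform bound extends this to all of $\mc H$.

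Strong convergence alone does not yield norm convergence of $\mb L'\mb R_{\mb L_0}(\lambda)$, and bridging this gap is the anticipated main obstacle. The ingredient that does the work is compactness of $\mb L'$ from Lemma~\ref{lem:compact}, exploited via the adjoint:
\[ \|\mb L'\mb R_{\mb L_0}(\lambda)\|_{\mc H}=\|\mb R_{\mb L_0}(\lambda)^*(\mb L')^*\|_{\mc H}, \]
where $(\mb L')^*$ is compact. Since $\mc H$ is a Hilbert space, $\mb S_0(\tau)^*$ is a $C_0$-semigroup with the same exponential decay, so $\mb R_{\mb L_0}(\lambda)^*=\mb R_{\mb L_0^*}(\bar\lambda)$ is uniformly bounded on $\{\Re\lambda\geq -\tfrac14\}$ and tends strongly to $0$ as $|\lambda|\to\infty$ by the same argument applied to the adjoint semigroup. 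Now the image $(\mb L')^*(B_1)$ of the unit ball is precompact in $\mc H$, and any uniformly bounded family of operators converging pointwise to $0$ converges to $0$ uniformly on a precompact set (cover by finitely many $\varepsilon$-balls). This forces $\|\mb R_{\mb L_0}(\lambda)^*(\mb L')^*\|_{\mc H}\to 0$, and the lemma follows by the Neumann-series argument above with $R$ chosen so large that $\|\mb L'\mb R_{\mb L_0}(\lambda)\|_{\mc H}\leq\tfrac12$ throughout $\Gamma_R$.
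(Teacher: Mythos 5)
Your argument reaches the same conclusion as the paper, but by a genuinely different route. The paper proves the key decay $\|\mb L'\mb R_{\mb L_0}(\lambda)\|_{\mc H}\to 0$ \emph{quantitatively}: it writes $\mb f=\mb R_{\mb L_0}(\lambda)\mb g$, uses the first component of the resolvent equation from Lemma~\ref{lem:dist} to solve for $f_1$ as $f_1=\frac{1}{\lambda+1}\left[-\xi^j\partial_{\xi^j}f_1+f_2+g_1\right]$, and then estimates $\|Vf_1\|_{\dot H^1\cap\dot H^3}$ via Hardy's inequality and the decay of $V$, exactly as in the compactness proof (Lemma~\ref{lem:compact}); this gives the explicit rate $\|\mb L'\mb R_{\mb L_0}(\lambda)\|_{\mc H}\lesssim |\lambda+1|^{-1}$. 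You instead run a soft, abstract argument: strong convergence $\mb R_{\mb L_0}(\lambda)\to 0$ from the resolvent identity plus density, passage to adjoints, strong convergence of the adjoint resolvent (which you correctly derive \emph{separately} for the adjoint semigroup, rather than trying to deduce it from strong convergence of $\mb R_{\mb L_0}(\lambda)$ itself, which would fail), and finally compactness of $(\mb L')^*$ together with the standard ``strong convergence is uniform on precompact sets'' lemma. Both are correct. Your approach is more conceptual and would apply verbatim to any perturbation $\mb L'$ that is compact, without ever looking inside the structure of $\mb L_0$ or $\mb L'$; it is a good illustration that compactness of $\mb L'$ alone, already established in Lemma~\ref{lem:compact}, suffices. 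The paper's approach is more hands-on, yields the explicit rate $O(|\lambda|^{-1})$, and avoids invoking adjoint semigroup theory. One minor technical point worth flagging: your adjoint step uses that $\mc H$ is a Hilbert space (so that $\mb S_0(\tau)^*$ is again a $C_0$-semigroup with generator $\mb L_0^*$ having dense domain, and $\mb R_{\mb L_0}(\lambda)^*=\mb R_{\mb L_0^*}(\bar\lambda)$); the intersection space $\mc H$ can indeed be equipped with a Hilbert norm, but in a general Banach space setting this step would need more care, whereas the paper's estimate does not rely on Hilbert structure.
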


\begin{proof}
Let $\Re\lambda\geq -\frac14$, $\mb g\in \mc H$, and set $\mb f:=\mb
R_{\mb L_0}(\lambda)\mb g$. Then $\mb f\in \mc D(\mb L_0)$ and
$(\lambda \mb I-\mb L_0)\mb f=\mb g$. By Lemma \ref{lem:dist}, we have
\[ g_1(\xi)=\xi^j\partial_{\xi^j}f_1(\xi)+(\lambda+1)f_1(\xi)-f_2(\xi) \]
in the sense of distributions, where $\mb g=(g_1,g_2)$ and $\mb
f=(f_1,f_2)$. Consequently,
\[ f_1(\xi)=\frac{1}{\lambda+1}\left
    [-\xi^j\partial_{\xi^j}f_1(\xi)+f_2(\xi)+g_1(\xi)\right] \]
and Hardy's inequality yields
\begin{align*}
  \|\mb L'\mb R_{\mb L_0}(\lambda)\mb g\|_{\mc H}
  &=\|Vf_1\|_{\dot
    H^1(\R^5)\cap \dot H^3(\R^5)}\lesssim \frac{1}{|\lambda+1|}\left (\|\mb
    f\|_{\mc H}+\|\mb
    g\|_{\mc H}\right) \\
    &=\frac{1}{|\lambda+1|}\left (\|\mb
    R_{\mb L_0}(\lambda)\mb g\|_{\mc H}+\|\mb
    g\|_{\mc H}\right) \\
  &\lesssim \frac{1}{|\lambda+1|}\|\mb g\|_{\mc H}
\end{align*}
for all $\lambda\in \C$ with $\Re\lambda\geq -\frac14$, where we have exploited
the decay of the potential $V(\xi)=\frac{16}{(1+|\xi|^2)^2}$ as in the
proof of Lemma \ref{lem:compact}. Thus, if $R>0$ is chosen large
enough, we obtain $\|\mb L'\mb R_{\mb L_0}(\lambda)\|_{\mc H}\leq \frac12$
for all $\lambda\in \Gamma_R$ 
and the existence of $[\mb I-\mb L'\mb R_{\mb L_0}(\lambda)]^{-1}$
follows by a Neumann series argument. By the Birman-Schwinger
principle, this implies the claim.
\end{proof}

\subsection{Connection to mode stability}
Since $\mb L'$ is compact, it follows
from Lemma \ref{lem:speccpt} and the \emph{analytic Fredholm theorem}
(see e.g.~\cite{Sim15}, p.~194, Theorem 3.14.3) that
$\lambda\mb I-\mb L$ is bounded invertible for all $\lambda\in \C$
with $\Re\lambda\geq-\frac14$ except for a finite number of eigenvalues,
each with finite algebraic multiplicity.

In order to locate these eigenvalues, we need to solve the equation
$(\lambda\mb I-\mb L)\mb f=\mb 0$.
Suppose there exists a (nontrivial) solution $\mb f=(f_1,f_2)\in \mc D(\mb L)=\mc D(\mb L_0)$. By Lemma
\ref{lem:dist}, we see that
\begin{align*}
  \xi^j\partial_{\xi^j}f_1(\xi)+(\lambda+1)f_1(\xi)-f_2(\xi)
  &=0 \\
  -\Delta
  f_1(\xi)+\xi^j\partial_{\xi^j}f_2(\xi)+(\lambda+2)f_2(\xi)-\frac{16}{(1+|\xi|^2)^2}f_1(\xi)
  &=0
\end{align*}
in the sense of distributions and by inserting the first equation into
the second one, we find
\begin{equation}
  \label{eq:specxi}
  -(\delta^{jk}-\xi^j\xi^k)\partial_{\xi^j}\partial_{\xi^k}f_1(\xi)+2(\lambda+2)\xi^j\partial_{\xi^j}f_1(\xi)
  +(\lambda+1)(\lambda+2)f_1(\xi)-\frac{16}{(1+|\xi|^2)^2}f_1(\xi)=0.
  \end{equation}
Furthermore, by Sobolev embedding and H\"older's inequality, we have $\dot H^2(\R^5)\cap \dot
H^4(\R^5)\subset L^\infty(\R^5) \subset L^2_{\mathrm{loc}}(\R^5)$ and
thus, $f_1\in H^4_{\mathrm{loc}}(\R^5)$.
  Consequently, by elliptic regularity, we
conclude that $f_1\in C^\infty(\R^5\setminus\S^4)$
and $f_1$ satisfies Eq.~\eqref{eq:specxi}  on $\R^5\setminus\S^4$ in the
sense of classical derivatives.
Recall that $f_1$ is radial and thus,
in terms of the auxiliary function $\widehat f_1\in
C^\infty(\R\setminus\{-1, 1\})$,
given by $\widehat f_1(\rho):=\rho f_1(\rho e_1)$,
Eq.~\eqref{eq:specxi} reads
\[ -(1-\rho^2)\widehat f_1''(\rho)-\frac{2}{\rho}\widehat f_1'(\rho)+2(\lambda+1)\rho
  \widehat
  f_1'(\rho)+\lambda(\lambda+1)\widehat f_1(\rho)+2\frac{1-6\rho^2+\rho^4}{\rho^2(1+\rho^2)^2}\widehat
  f_1(\rho)=0. \]
Consequently, since $\widehat f_1\in C^\infty(\R\setminus\{-1,1\})\cap
H^4((\frac12,\frac32))$, it follows by Frobenius' method that
$\widehat f_1\in C^\infty(\R)$ and $\widehat f_1$ is a nontrivial
solution in $C^\infty([0,1])$ of Eq.~\eqref{eq:spec}.
 This is the connection to the mode stability
problem that gives the latter a proper functional analytic
interpretation.

\begin{proposition}
  \label{prop:spec}
  For the spectrum $\sigma(\mb L)$ of $\mb L$ we have
  \[ \sigma(\mb L)\subset \{z\in\C: \Re z<0\}\cup \{1\} \]
  and $1$ is an eigenvalue of $\mb L$.
\end{proposition}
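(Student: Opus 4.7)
The plan is to combine the compactness–perturbation machinery already set up with Theorem \ref{thm:modstab} to handle the inclusion, and then exhibit an explicit eigenfunction at $\lambda=1$ coming from the known solution of Eq.~\eqref{eq:spec}.

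\textbf{Step 1: spectrum in $\{\Re\lambda\geq -\tfrac14\}$ is purely point.} The map $\lambda\mapsto \mb L'\mb R_{\mb L_0}(\lambda)$ is holomorphic on $\{\Re\lambda>-\tfrac12\}$, and takes values in the compact operators by Lemma \ref{lem:compact}. Since Lemma \ref{lem:speccpt} supplies a point of $\Gamma_R$ where $\mb I-\mb L'\mb R_{\mb L_0}(\lambda)$ is invertible, the analytic Fredholm theorem yields a meromorphic inverse $(\mb I-\mb L'\mb R_{\mb L_0}(\lambda))^{-1}$ on the connected set $\{\Re\lambda>-\tfrac12\}$ with finite-rank residues. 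The Birman--Schwinger identity recalled in the text then shows that $\sigma(\mb L)\cap\{\Re\lambda\geq -\tfrac14\}$ consists of at most finitely many eigenvalues of finite algebraic multiplicity.

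\textbf{Step 2: ruling out eigenvalues with $\Re\lambda\geq 0$, $\lambda\neq 1$.} Assume $\lambda\in\sigma(\mb L)$ with $\Re\lambda\geq 0$ and let $\mb f=(f_1,f_2)\in\mc D(\mb L)\setminus\{\mb 0\}$ satisfy $(\lambda\mb I-\mb L)\mb f=\mb 0$. The derivation immediately preceding the proposition (using Lemma \ref{lem:dist}, elliptic regularity, radiality, and Frobenius' theorem at $\rho=1$) produces a nontrivial $\widehat f_1\in C^\infty([0,1])$, namely $\widehat f_1(\rho)=\rho f_1(\rho e_1)$, which solves Eq.~\eqref{eq:spec}. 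Theorem \ref{thm:modstab} then forces $\lambda=1$. Together with Step 1 this gives $\sigma(\mb L)\subset \{\Re z<0\}\cup\{1\}$.

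\textbf{Step 3: $1$ is an eigenvalue.} Starting from the explicit solution $\widehat f_1(\rho)=\rho/(1+\rho^2)$ of Eq.~\eqref{eq:spec} at $\lambda=1$ (which appeared earlier as the manifestation of the symmetry-induced instability), I would set
\[ f_1(\xi):=\frac{1}{1+|\xi|^2},\qquad f_2(\xi):=(\Lambda+2)f_1(\xi)=\frac{2}{(1+|\xi|^2)^2}, \]
so that the first component of $(\mb L-\mb I)\mb f=\mb 0$ is automatic. A direct computation of $\Delta f_1$, $\Lambda f_2$, and $V f_1$ with $V(\xi)=16/(1+|\xi|^2)^2$ verifies the second component as well. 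The decay rates $|\nabla^k f_1|\lesssim \langle\xi\rangle^{-2-k}$ and $|\nabla^k f_2|\lesssim \langle\xi\rangle^{-4-k}$ in $\R^5$ place $\mb f$ in $\mc H$. To place $\mb f$ in $\mc D(\mb L_0)$, I would approximate by $\mb f_n:=\chi(\cdot/n)\mb f\in\mc S(\R^5)\times\mc S(\R^5)$ and check that both $\mb f_n\to \mb f$ and $\widehat{\mb L}_0\mb f_n\to\mb f-\mb L'\mb f$ in $\mc H$ via dominated convergence together with commutator estimates for $[\widehat{\mb L}_0,\chi(\cdot/n)]$, using the same decay/Hardy input that appeared in the proof of Lemma \ref{lem:compact}.

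\textbf{Main obstacle.} Steps 1 and 2 are essentially bookkeeping on top of the work already done, so the bulk of the real content sits in Theorem \ref{thm:modstab}, which is now proved. The one mildly technical point is the domain verification in Step 3: since $\mb L_0$ is defined as the closure of $\widehat{\mb L}_0$ on $\mc S\times\mc S$, one must check that the explicit $\mb f$ actually lies in this closure rather than merely in a maximal distributional domain. The cutoff approximation sketched above handles this, but it is the one place where care is needed to avoid circularity.
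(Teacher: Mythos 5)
Your proposal is correct and follows the same route as the paper: Steps 1 and 2 (analytic Fredholm via the Birman--Schwinger identity, plus the derivation reducing eigenvalue equations to Eq.~\eqref{eq:spec} and then invoking Theorem \ref{thm:modstab}) reproduce exactly the paper's argument for the spectral inclusion. The one place you go further is Step 3: the paper dismisses the eigenvalue-$1$ claim with a reference to time-translation symmetry, whereas you write out the explicit eigenfunction $\mb f=(f_1,f_2)$ with $f_1(\xi)=(1+|\xi|^2)^{-1}$, $f_2(\xi)=2(1+|\xi|^2)^{-2}$, verify the second component of the eigenvalue equation, and---correctly identifying the only real subtlety---check membership in $\mc D(\mb L_0)$ (the closure domain, not a maximal distributional domain) via a cutoff approximation; this is a sound and more complete treatment of that step than the paper offers.
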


\begin{proof}
  The statement about the spectrum follows from the above derivation
  in conjunction with Lemma
  \ref{lem:speccpt} and mode stability
  (Theorem \ref{thm:modstab}).
  The eigenvalue $1$ stems from time translation symmetry as explained
  in the discussion of mode stability. 
\end{proof}

\subsection{Control of the linearized evolution}

We define the Riesz or spectral projection associated to the
eigenvalue $1$ by
\[ \mb P:=\frac{1}{2\pi i}\int_\gamma (z\mb I-\mb L)^{-1}dz, \]
where $\gamma: [0,1]\to\C$ is given by $\gamma(t):=1+\tfrac12 e^{2\pi
  i t}$.

\begin{lemma}
  The eigenvalue $1$ of $\mb L$ is simple, i.e., $\mb P$ has rank $1$.
\end{lemma}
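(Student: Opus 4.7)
My plan is to separate the argument into the geometric multiplicity and the algebraic multiplicity at the isolated eigenvalue $1$. Since $\mb L'$ is compact (Lemma \ref{lem:compact}) and $1$ is isolated in $\sigma(\mb L)$ by Proposition \ref{prop:spec}, standard Riesz-Dunford theory gives $\operatorname{rank}\mb P = \dim \ker(\mb I-\mb L)^{k}$ for all sufficiently large $k$, and I will show this stabilized dimension equals $1$.

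\emph{Step 1: the geometric multiplicity equals one.} I take any $\mb f = (f_1,f_2) \in \ker(\mb I - \mb L)$ and run exactly the reduction used in the proof of Proposition \ref{prop:spec}: this forces $f_1$ to be radial with profile $\widehat f_1(\rho) := \rho f_1(\rho e_1)$ a $C^\infty([0,1])$-solution of Eq.~\eqref{eq:spec} at $\lambda = 1$, while the first-component equation determines $f_2 = \Lambda f_1 + 2 f_1$ from $f_1$. The indicial polynomial of Eq.~\eqref{eq:spec} at $\rho = 0$ has roots $1$ and $-2$, so the space of solutions that are smooth at the origin is one-dimensional and is spanned by $\rho \mapsto \rho/(1+\rho^2)$. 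This yields a single (up to scalars) eigenvector $\mb g_{*}$ with first component $\xi \mapsto \tfrac{1}{1+|\xi|^2}$ and second component $\xi \mapsto \tfrac{2}{(1+|\xi|^2)^2}$.

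\emph{Step 2: no nontrivial Jordan chain.} I assume for contradiction that some $\mb h = (h_1,h_2) \in \mc D(\mb L)$ satisfies $(\mb L - \mb I)\mb h = \mb g_{*}$. The first-component equation gives $h_2 = \Lambda h_1 + 2 h_1 + g_{*1}$; substituting into the second component produces an inhomogeneous elliptic equation for $h_1$ alone, with an explicit source built from $\mb g_{*}$. The elliptic regularity and radial reduction used in Proposition \ref{prop:spec} then force $\widehat h_1(\rho) := \rho h_1(\rho e_1)$ to lie in $C^\infty([0,1])$ and to satisfy an inhomogeneous version of Eq.~\eqref{eq:spec} at $\lambda = 1$ with a computable right-hand side $R(\rho)$. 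The homogeneous equation has the smooth solution $\rho/(1+\rho^2)$, and Frobenius analysis at $\rho = 1$, where the two indices $0$ and $1-\lambda$ collide at $\lambda = 1$, furnishes a second linearly independent solution containing a $\log(1-\rho)$ term. Variation of parameters then decomposes any solution of the inhomogeneous problem near $\rho = 1$ as a smooth piece plus a multiple of this logarithmic mode, so $C^\infty$-regularity of $\widehat h_1$ on $[0,1]$ forces the coefficient of the log, a single Wronskian-weighted integral of $R$ against $\rho/(1+\rho^2)$ over $[0,1]$, to vanish.

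\emph{Main obstacle.} The entire argument therefore collapses to verifying that this residual integral is nonzero, which is the only place computation is actually required. I would carry it out either by direct evaluation, since $R$ is an elementary rational function once everything is written out, or equivalently by exhibiting a dual eigenfunction $\phi^{*}$ of $\mb L^{*}$ at $\lambda = 1$ and checking $\langle \mb g_{*}, \phi^{*}\rangle \neq 0$, which expresses the very same Fredholm solvability obstruction. Conceptually one expects nonvanishing because $\mb g_{*}$ is tied to the genuine one-parameter family $v_T^{T'}$ by time translation, and a vanishing integral would signal a spurious second symmetry; but making this rigorous requires actually evaluating the identity, and that is the one nontrivial step in the proof.
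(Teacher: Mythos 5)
Your overall route is the same as the paper's: reduce the eigenvalue equation and the Jordan-chain equation to the radial ODE, get geometric multiplicity one from the Frobenius indices $1,-2$ at $\rho=0$ (with the explicit eigenfunction coming from $\rho\mapsto\rho/(1+\rho^2)$, and your formulas $f_1(\xi)=1/(1+|\xi|^2)$, $f_2(\xi)=2/(1+|\xi|^2)^2$ are correct), and then exclude a generalized eigenvector by showing that $(\mb I-\mb L)\mb h=\mb f_*$ has no admissible solution via variation of parameters. This is exactly the argument the paper sketches and then delegates to \cite{DonSchAic12}, Lemma 4.20, where the variation-of-parameters computation is actually carried out.

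The genuine gap is that you stop at precisely the step that carries all the content. Everything up to your ``main obstacle'' is soft: the analytic Fredholm theorem, nilpotency of $\mb I-\mb L$ on $\rg\mb P$, the elliptic-regularity/Frobenius reduction, and the observation that smoothness at $\rho=1$ (where the indices collide at $0$ for $\lambda=1$) forces the coefficient of the logarithmic branch, a Wronskian-weighted integral of the source against $\rho/(1+\rho^2)$, to vanish. The lemma is proved only once that coefficient is shown to be nonzero, and you neither evaluate the integral nor exhibit and pair against a dual eigenfunction; the symmetry heuristic you offer (``a vanishing integral would signal a spurious second symmetry'') is, as you concede, not an argument. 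Note that this verification is entirely feasible and elementary here: at $\lambda=1$ one homogeneous solution is the explicit $u_1(\rho)=\rho/(1+\rho^2)$, a second solution is obtained in closed form by reduction of order, and the source $R$ built from $\mb f_*$ is an explicit rational function, so the obstruction reduces to an explicit integral (equivalently, one checks directly that the unique solution of the inhomogeneous equation that is regular at $\rho=0$ necessarily contains a $\log(1-\rho)$ term, contradicting $\widehat h_1\in C^\infty([0,1])$). A complete proof must include this computation; as written, your proposal reduces the lemma to it but leaves it open. A minor additional point: you should also confirm that the candidate eigenfunction actually lies in $\mc H$ and in $\mc D(\mb L)$, though that is routine given its decay.
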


\begin{proof}
  By the analytic Fredholm theorem (see above), $\mb P$ has finite
  rank. Consequently,
  $\mb I-\mb L$ restricts to a finite-dimensional operator on $\rg\mb
  P$ and from linear algebra we infer that the part of $\mb I-\mb L$
  in $\rg\mb P$ is nilpotent.
  The fact that the eigenvalue $1$ has algebraic multiplicity
  exactly equal to $1$ is then proved by ODE methods, by showing that the
  equation $(\mb I-\mb L)\mb f=\mb f_*$, where $\mb f_*$ is the eigenfunction
  associated to the eigenvalue $1$, has no solution. This is an
  exercise with the variation of parameters formula that we leave to
  the interested reader, see e.g.~\cite{DonSchAic12}, Lemma 4.20, for
  the precise argument.
\end{proof}

From abstract semigroup theory we can now obtain a sufficiently
detailed understanding of the linearized evolution generated by $\mb
L$.

\begin{lemma}
  The operator $\mb L$ generates a strongly-continuous semigroup $\mb
  S$ on $\mc H$. Furthermore,
  there exists an $\epsilon>0$ and a $C>0$ such that
  \begin{align*}
    \|\mb S(\tau)(\mb I-\mb P)\mb f\|_{\mc H}&\leq C
                                               e^{-\epsilon\tau}\|(\mb
                                               I-\mb P)\mb f\|_{\mc H}
    \\
    \mb S(\tau)\mb P\mb f&=e^{\tau}\mb P\mb f
  \end{align*}
  for all $\tau\geq 0$ and $\mb f\in \mc H$.
\end{lemma}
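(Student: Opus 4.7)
The plan is to establish semigroup generation by a bounded perturbation argument, then decouple the evolution using the Riesz projector $\mb P$, handle the one-dimensional unstable part explicitly, and obtain exponential decay on the complementary subspace via a Gearhart--Pr\"uss type theorem.

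First, for generation: by construction $\mb L_0$ is the generator of the strongly-continuous semigroup $\mb S_0$ on $\mc H$, and by Lemma \ref{lem:compact} the operator $\mb L'$ is bounded (indeed compact). Since $\mc D(\mb L) = \mc D(\mb L_0)$ and $\mb L = \mb L_0 + \mb L'$ differs from $\mb L_0$ by a bounded perturbation, the standard bounded perturbation theorem of semigroup theory gives that $\mb L$ generates a strongly-continuous semigroup $\mb S$ on $\mc H$.

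Next I would exploit the spectral decomposition. By Proposition \ref{prop:spec} together with Lemma \ref{lem:speccpt}, the contour $\gamma$ encircles only the isolated spectral point $1$ of $\mb L$. Standard Riesz--Dunford functional calculus then implies that $\mb P$ commutes with $\mb L$ and hence with $\mb S(\tau)$, that both $\rg \mb P$ and $\ker \mb P$ are $\mb S(\tau)$-invariant, and that $\sigma(\mb L|_{\rg \mb P}) = \{1\}$ while $\sigma(\mb L|_{\ker \mb P}) = \sigma(\mb L)\setminus\{1\}$. Since the previous lemma says $\mb P$ has rank $1$ and $1$ is a simple eigenvalue with eigenfunction $\mb f_*$, the restriction $\mb L|_{\rg \mb P}$ is multiplication by $1$, which at once yields $\mb S(\tau)\mb P \mb f = e^\tau \mb P \mb f$.

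For the exponential decay on $\ker \mb P$, I would combine Proposition \ref{prop:spec} with Lemma \ref{lem:speccpt}. The first result forces $\sigma(\mb L|_{\ker \mb P}) \subset \{\Re z < 0\}$, and by Lemma \ref{lem:speccpt} together with the analytic Fredholm theorem this set contains only finitely many eigenvalues (of finite multiplicity) in $\{\Re z \geq -\tfrac14\}$. Choosing $\epsilon \in (0,\tfrac14)$ smaller than the modulus of the real part of each of them, the resolvent of $\mb L|_{\ker \mb P}$ is holomorphic on the closed half-plane $\{\Re z \geq -\epsilon\}$. The bound for large $|\Im z|$ comes directly from Lemma \ref{lem:speccpt}, and the bounded part is handled by continuity of the resolvent on a compact set disjoint from the (now removed) spectrum, giving a uniform bound $\sup_{\Re z \geq -\epsilon} \|(z \mb I - \mb L|_{\ker \mb P})^{-1}\|_{\mc H} < \infty$. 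Since $\mc H$ is a Hilbert space, the Gearhart--Pr\"uss theorem then furnishes a $C > 0$ with $\|\mb S(\tau) \mb f\|_{\mc H} \leq C e^{-\epsilon \tau} \|\mb f\|_{\mc H}$ for all $\mb f \in \ker \mb P$, which is exactly the first displayed bound applied to $(\mb I - \mb P)\mb f$.

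The step I expect to be the main obstacle is the uniform resolvent estimate on $\{\Re z = -\epsilon\}$ required by Gearhart--Pr\"uss. Lemma \ref{lem:speccpt} produces it only outside a compact region, and one must combine this with a compactness/continuity argument on the bounded piece, after having excluded the finitely many sub-eigenvalues by choosing $\epsilon$ small enough; assembling these local pieces into a genuine half-plane bound is the real technical content of the argument, since everything else reduces to standard semigroup abstractions.
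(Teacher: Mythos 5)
Your proposal is correct and follows essentially the same route as the paper: bounded perturbation of $\mb L_0$ for generation, the rank-one Riesz projection for the relation $\mb S(\tau)\mb P\mb f=e^{\tau}\mb P\mb f$, and a uniform resolvent bound fed into the Gearhart--Pr\"uss(-Greiner) theorem for the decay on the stable subspace. The only cosmetic difference is that the paper verifies the bound $\sup_{\Re\lambda>0}\|(\lambda\mb I-\mb L)^{-1}(\mb I-\mb P)\|_{\mc H}<\infty$ on the open right half-plane and lets the theorem produce the rate $\epsilon$, whereas you first push the resolvent bound to a closed half-plane $\Re z\geq-\epsilon$; both hinge on the same ingredients (Lemma \ref{lem:speccpt}, Proposition \ref{prop:spec}, and removal of the eigenvalue $1$ via $\mb P$).
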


\begin{proof}
  The operator $\mb L$ differs from the semigroup
  generator $\mb L_0$ by the bounded operator $\mb L'$ and hence
  generates a semigroup $\mb S$ itself.
The statement about the evolution on the unstable subspace, $\mb
S(\tau)\mb P\mb f=e^\tau\mb P\mb f$, is a direct consequence of the
fact that the range of $\mb P$ is one-dimensional and hence spanned by the eigenfunction of $\mb
L$ associated to the eigenvalue $1$. For the evolution on the stable
subspace, we note that Lemma \ref{lem:speccpt} and Proposition
\ref{prop:spec} imply that 
\[ \sup_{\Re\lambda>0}\|(\lambda \mb I-\mb L)^{-1}(\mb I-\mb
  P)\|_{\mc H}<\infty \]
and the claimed growth bound follows from the Gearhart-Pr\"u\ss-Greiner-Theorem,
see e.g.~\cite{EngNag00}, p.~302, Theorem 1.11.
\end{proof}

\subsection{The nonlinear problem}

We finally sketch how to proceed with the nonlinear stability. 
In Duhamel form, the equation we would like to solve reads
\[ \Phi(\tau)=\mb S(\tau)\mb f+\int_0^\tau \mb S(\tau-\tau')\mb
  N(\Phi(\tau'))d\tau'.\]
Typically, such an equation is solved by a fixed point
argument. However, in the present form this is not possible due to
the exponential growth of the semigroup on $\rg\mb P$. Thus, we
borrow an idea from dynamical systems theory known as the
\emph{Lyapunov-Perron method} and consider instead the equation
\begin{equation}
  \label{eq:mod}
  \Phi(\tau)=\mb S(\tau)[\mb f-\mb C(\mb f, \Phi)]+\int_0^\tau \mb
  S(\tau-\tau')\mb N(\Phi(\tau'))d\tau',
  \end{equation}
where
\[ \mb C(\mb f, \Phi):=\mb P\mb f+\mb P\int_0^\infty e^{-\tau'}\mb
  N(\Phi(\tau'))d\tau' \]
is a correction term that stabilizes the evolution. Formally, this
term is obtained by applying the spectral projection $\mb P$ to the
original equation.
Consequently, the subtraction of $\mb C(\mb f, \Phi)$ corrects the
initial data along the one-dimensional subspace $\rg\mb P$ on which
the linearized evolution grows exponentially. Note, however, that
there is a nonlinear self-interaction, i.e., the correction term
depends on the solution itself and is not known in advance as would be
the case for a linear problem. Nonetheless, by a routine fixed point
argument utilizing the Banach algebra property of $\mc H$, we can
show that Eq.~\eqref{eq:mod} has a solution $\Phi\in C([0,\infty),\mc
H)$ for any small data $\mb f$. Finally, by realizing that the data we
want to describe depend on $T$, we see, e.g.~by the intermediate value
theorem,
that there always exists a $T$
that makes the correction term vanish. By translating back to the
original variables, we finally arrive at the following result on
the stability of the wave maps blowup. Recall the
wave maps equation in corotational symmetry reduction,
\begin{equation}
  \label{eq:wmcor5d}
  (\Box v)(t,x)+\frac{\sin(2|x|v(t,x))-2|x|v(t,x)}{|x|^3}=0,
\end{equation}
and the self-similar blowup solution
\[ v_*^T(t,x):=(T-t)^{-1}w_*\left(\frac{x}{T-t}\right),\quad
  w_*(\xi)=\frac{2}{|\xi|}\arctan(|\xi|). \]

\begin{theorem}[Nonlinear asymptotic stability of wave maps blowup]
  There exist constants $M, \delta_0>0$ such that the following holds. Let $\delta\in [0,\delta_0]$ and suppose that $f,g\in C^\infty(\R^5)$
  are radial and satisfy
  \[ \|f-v_*^1(0,\cdot)\|_{\dot H^2(\R^5)\cap \dot
      H^4(\R^5)}+\|g-\partial_0 v_*^1(0,\cdot)\|_{\dot H^1(\R^5)\cap
      \dot H^3(\R^5)}\leq \frac{\delta}{M}. \]
  Then there exist a $T\in [1-\delta, 1+\delta]$ and a unique
  solution $v\in C^\infty([0,T)\times \R^5)$ of
  Eq.~\eqref{eq:wmcor5d} that satisfies $(v(0,\cdot),\partial_0
  v(0,\cdot))=(f,g)$.
  Furthermore, we have the decomposition
  \[ v(t,x)=(T-t)^{-1}\left [w_*\left(\frac{x}{T-t}\right)+\varepsilon\left(t,\frac{x}{T-t}\right)\right] \]
  where
  \[ \|\varepsilon(t,\cdot)\|_{\dot H^2(\R^5)\cap\dot H^4(\R^5)}
    +\left \|(T-t)\partial_t \varepsilon(t,\cdot)+\Lambda
      \varepsilon(t,\cdot)+\varepsilon(t,\cdot)\right\|
    _{\dot H^1(\R^5)\cap \dot H^3(\R^5)}\to
    0 \]
  as $t\to T-$.
\end{theorem}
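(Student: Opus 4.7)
The strategy is a Lyapunov--Perron fixed point argument in similarity coordinates, coupled with a one-parameter selection of the blowup time $T$. First I would encode the initial data $(f,g)$ in similarity coordinates with parameter $T$, producing a perturbation datum $\mb f(T)\in\mc H$ of the static profile $w_*$ with $\|\mb f(T)\|_{\mc H}\lesssim\delta$ uniformly in $T$ in a small neighborhood of $1$, and recast~\eqref{eq:wmsscpertSG} in the modified Duhamel form~\eqref{eq:mod}. The correction $\mb C(\mb f,\Phi)$ is designed so that, upon applying $\mb P$ to~\eqref{eq:mod} and using $\mb S(\tau)\mb P=e^\tau\mb P$, one obtains
\[ \mb P\Phi(\tau)=-e^\tau\int_\tau^\infty e^{-\tau'}\mb P\,\mb N(\Phi(\tau'))\,d\tau', \]
which decays as $\tau\to\infty$ whenever $\Phi$ does---thereby taming the unstable mode at the cost of enforcing a scalar constraint on the data.

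Second, I would solve~\eqref{eq:mod} by a contraction argument in the weighted Banach space
\[ X:=\{\Phi\in C([0,\infty),\mc H):\|\Phi\|_X:=\sup_{\tau\geq 0}e^{\epsilon'\tau}\|\Phi(\tau)\|_{\mc H}<\infty\} \]
for some $0<\epsilon'<\epsilon$. The key ingredients are: (i) the quadratic estimate $\|\mb N(\Phi)-\mb N(\Psi)\|_{\mc H}\lesssim(\|\Phi\|_{\mc H}+\|\Psi\|_{\mc H})\|\Phi-\Psi\|_{\mc H}$, which follows from the Banach algebra property (Lemma~\ref{lem:alg}) together with smoothness of $y\mapsto N(y,\xi)$; (ii) the linear decay $\|\mb S(\tau)(\mb I-\mb P)\|_{\mc H}\lesssim e^{-\epsilon\tau}$; and (iii) elementary $L^1$-type convolution bounds in $\tau$ that control both the Duhamel term on $(\mb I-\mb P)\mc H$ and the tail integral on $\mb P\mc H$ by $\|\Phi\|_X^2$. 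For $\delta$ sufficiently small, a ball of radius $\sim\delta$ in $X$ is mapped into itself by a $\tfrac12$-contraction, yielding a unique $\Phi=\Phi_T\in X$ depending continuously on $T$.

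Third, for $\Phi_T$ to solve the original equation rather than~\eqref{eq:mod}, the scalar-valued quantity
\[ F(T):=\mb P\,\mb f(T)+\mb P\int_0^\infty e^{-\tau'}\mb N(\Phi_T(\tau'))\,d\tau'\in\rg\mb P \]
must vanish. As $T$ ranges over $[1-\delta,1+\delta]$, the linear-in-$T$ contribution $\mb P\mb f(T)$ sweeps a nontrivial segment of the one-dimensional space $\rg\mb P$---nondegenerately so, because the unstable eigendirection at $\lambda=1$ is precisely the time-translation symmetry mode, i.e.\ the very direction activated by varying $T$---while the nonlinear term is quadratically small. An intermediate value argument then selects $T_*\in[1-\delta,1+\delta]$ with $F(T_*)=0$. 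Unwinding the change of variables converts the $X$-decay of $\Phi_{T_*}$ into the stated decay of $\varepsilon$ as $t\to T_*-$, and smoothness of $v$ on $[0,T_*)\times\R^5$ follows from a standard bootstrap obtained by commuting the equation with $\partial_\tau$ and powers of $\nabla$ and reiterating the fixed point in higher-regularity analogues of $X$.

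\smallskip
\noindent\textbf{Main obstacle.} The delicate step is the coupling between the fixed point and the parameter selection: since $\Phi_T$, and therefore $F(T)$, depends on $T$, one must establish Lipschitz continuity of $T\mapsto\Phi_T$ in $X$ and then enough transversality of $F$ near $T=1$ to invoke the intermediate value theorem inside the small window $[1-\delta,1+\delta]$. The structural fact that makes this succeed is the identification of $\rg\mb P$ with the time-translation symmetry direction, which guarantees that the derivative of $T\mapsto\mb P\mb f(T)$ at $T=1$ is nonzero and dominates the quadratic correction for small $\delta$.
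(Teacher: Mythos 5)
Your proposal follows essentially the same route as the paper: the Lyapunov--Perron modification \eqref{eq:mod} with the correction term $\mb C(\mb f,\Phi)$, a contraction in a decaying space built on the Banach algebra $\mc H$ and the semigroup splitting $\mb S(\tau)\mb P=e^\tau\mb P$, $\|\mb S(\tau)(\mb I-\mb P)\|\lesssim e^{-\epsilon\tau}$, and an intermediate value argument in $T$ exploiting that $\rg\mb P$ is the time-translation direction. The paper itself only sketches these steps (referring to \cite{DonSchAic12} for details), and your outline, including the continuity of $T\mapsto\Phi_T$ and the transversality of $T\mapsto\mb P\mb f(T)$, is a faithful and correct elaboration of the same argument.
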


\begin{remark}
  Analogous results are known in all supercritical
  dimensions, see \cite{ChaDonGlo17, Glo22}. The stability problem
  outside of corotational symmetry is still open, though. However,
  there are stability results on self-similar blowup without symmetry
  assumptions for the simpler wave equation with a power nonlinearity
  \cite{DonSch16, GloSch21, CsoGloSch21, Ost23}.
\end{remark}

\section{Conclusion}

Understanding large-data solutions of supercritical evolution equations remains
one of the great challenges in contemporary analysis. The only
rigorous methods we have depend on the existence of special solutions that
are sufficiently well known. 
In many cases, self-similar solutions play this role and provide an
entrance point to the rigorous study of large-data regimes because
they open up the possibility of perturbative treatments. However, the
understanding of the linearized evolution close to a self-similar
solution is very challenging and requires knowledge of the
spectrum of the corresponding linear operator that is genuinely
nonself-adjoint in case of wave equations. This is the point where the
analysis typically fails because there are no general methods to treat
these spectral problems. In this exposition we presented the only
known method so far that is capable of extracting the necessary
spectral information in a number of nontrivial cases in a rigorous
way. It consists of a ``hard part'' that proves the mode stability and
a ``soft part'' that embeds the mode stability problem into a
proper spectral-theoretic framework for the generator of the
linearized evolution. Once the linearized evolution is understood, the
treatment of the full nonlinear problem is routine.
The approach was successfully applied to some of the most
important models such as wave maps \cite{Don11, DonSchAic12,
  CosDonGlo17, ChaDonGlo17, DonGlo19, Glo22}, Yang-Mills fields
\cite{Don14, CosDonGloHua16, Glo22a, Glo23}, and wave
equations with power nonlinearities \cite{DonSch14, DonSch16,
  DonSch17, GloSch21, CsoGloSch21}. In addition, extensions to more
general coordinate systems \cite{BieDonSch21, DonOst21,
  CheDonGloMcNSch22, Ost23} and weaker topologies \cite{Don17,
  DonRao20, Wal22, DonWal22a, DonWal22b} were considered.

Despite this recent success, a lot remains to be done. The presented method relies
strongly on the fine properties of the perturbed solution and is
probably hard to implement if the solution is not known in closed
form. Consequently, it would be
very desirable to develop more conceptual methods that provide
easy-to-verify criteria for mode stability. Whether this is possible
at all remains to be seen. A conceptual breakthrough in this area
would constitute a major step forward in modern PDE analysis and open
a whole new spectrum of problems that could be rigorously dealt with. 
In any case, we hope that this exposition
provides an accessible account to the current method that may be useful for researchers that
are confronted with similar problems in their work. 

\appendix

\section{Background material}

\subsection{The Phragm\'en-Lindel\"of principle}

The Phragm\'en-Lindel\"of principle is an extension of the maximum
principle to unbounded domains. There are many different versions and
we present the simplest one that is sufficient for our purposes. First,
recall the fundamental maximum principle from complex analysis.

\begin{lemma}[Maximum principle]
  Let $\Omega\subset \C$ be open, connected, and bounded. Suppose that $f:
  \overline{\Omega}\to \C$ is continuous and that $f|_{\Omega}:
  \Omega\to\C$ is holomorphic. Then
  \[ |f(z)|\leq \max_{\zeta\in \partial\Omega}|f(\zeta)| \]
  for all $z\in \Omega$.
\end{lemma}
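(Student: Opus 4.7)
The statement is the classical maximum modulus principle from complex analysis, so the plan is to reduce it to two standard ingredients: the mean value property of holomorphic functions and the fact that a holomorphic function of constant modulus on a connected open set is constant.

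First, since $\overline{\Omega}$ is compact and $|f|$ is continuous, the maximum $M:=\max_{\zeta\in\overline{\Omega}}|f(\zeta)|$ is attained at some point $z_0\in\overline{\Omega}$. If $z_0\in\partial\Omega$, the stated inequality is immediate. Hence I will argue by contradiction, assuming $z_0\in\Omega$, and show this forces $f$ to be constant on $\Omega$, in which case the bound is again trivial by continuity of $f$ on $\overline{\Omega}$.

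So suppose $z_0\in\Omega$ with $|f(z_0)|=M$. Pick $r>0$ small enough that $\overline{\D_r(z_0)}\subset\Omega$. The mean value property gives
\[
f(z_0)=\frac{1}{2\pi}\int_0^{2\pi}f(z_0+re^{i\theta})\,d\theta,
\]
and taking moduli yields $M=|f(z_0)|\leq\frac{1}{2\pi}\int_0^{2\pi}|f(z_0+re^{i\theta})|\,d\theta\leq M$. Equality throughout, combined with the continuity of $|f|$, forces $|f(z_0+re^{i\theta})|=M$ for every $\theta$. Since this is valid for every sufficiently small $r$, the set $U:=\{z\in\Omega:|f(z)|=M\}$ contains an open neighborhood of $z_0$. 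By continuity, $U$ is closed in $\Omega$, and by the previous remark $U$ is also open; as it is nonempty and $\Omega$ is connected, $U=\Omega$, i.e.\ $|f|\equiv M$ on $\Omega$.

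The final step is to promote constant modulus to constant value: writing $f=u+iv$ with $u,v$ real and $u^2+v^2=M^2$, either $M=0$ (so $f\equiv 0$) or differentiation together with the Cauchy--Riemann equations forces $u$ and $v$ to have vanishing gradients, so $f$ is constant on $\Omega$, hence on $\overline{\Omega}$ by continuity. In either case the bound $|f(z)|\leq\max_{\zeta\in\partial\Omega}|f(\zeta)|$ holds for all $z\in\Omega$. There is no genuine obstacle here; the only mild subtlety is ensuring that one is allowed to invoke the mean value property, which is standard for functions holomorphic on an open set containing a closed disc, and this is guaranteed because $\Omega$ is open and $z_0$ is assumed interior.
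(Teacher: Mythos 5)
Your argument is correct: it is the standard proof via the mean value property, the open/closed (connectedness) argument showing $|f|\equiv M$ on $\Omega$ if the maximum is attained at an interior point, and the Cauchy--Riemann step upgrading constant modulus to constant value. Note that the paper itself gives no proof of this lemma --- it is recalled as classical background for the Phragm\'en--Lindel\"of principle --- so there is no argument in the text to compare against. Two small remarks: the final constancy step is not actually needed, since $|f|\equiv M$ on $\Omega$ together with continuity on $\overline\Omega$ already gives $\max_{\partial\Omega}|f|=M$ (here one uses that $\partial\Omega\neq\emptyset$, which holds because $\Omega$ is nonempty and bounded --- worth saying explicitly, as this is exactly where boundedness enters); and your opening ``by contradiction'' is really just a case distinction, since you do not derive a contradiction but show the bound holds in both cases, which is perfectly fine logically.
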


The maximum principle shows that if we want to control a holomorphic
function on a bounded domain $\Omega$, it is enough to control it on
the boundary.
The assumption of boundedness is crucial here. However, under a mild
growth condition, the maximum principle extends to unbounded
domains and in this situation it goes by the name of
Phragm\'en-Lindel\"of. There are many different versions of this
principle. We use a very basic one that allows us to bound a function
on the complex right half-plane by its values on the imaginary axis.

\begin{lemma}[Phragm\'en-Lindel\"of principle]
  \label{lem:PhrLin}
  Let $\Omega:=\{z\in \C: \Re z>0\}$ and suppose that $f:
  \overline{\Omega}\to \C$ is continuous and that $f|_{\Omega}:
  \Omega\to\C$ is holomorphic. Let $M>0$. If
\begin{enumerate}
  \item $|f(it)|\leq M$ for
    all $t\in \R$ and
    \item there exists a $C\geq 0$ such that $|f(z)|\leq
      Ce^{|z|^\frac12}$ for all $z\in \Omega$
    \end{enumerate}
    then
  \[ |f(z)|\leq M \]
  for all $z\in \Omega$.
\end{lemma}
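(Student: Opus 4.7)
The plan is to use the standard trick of damping $f$ by multiplying by an auxiliary holomorphic function $h_\varepsilon(z) = e^{-\varepsilon z^\alpha}$ chosen so that (i) on the imaginary axis, $|h_\varepsilon|$ is at most $1$, (ii) for fixed $\varepsilon>0$, the product $f\cdot h_\varepsilon$ decays as $|z|\to\infty$ in the right half-plane fast enough to overpower the $e^{|z|^{1/2}}$ growth of $f$, and (iii) in the limit $\varepsilon\to 0^+$, $h_\varepsilon(z)\to 1$ pointwise. Then the bounded maximum principle applies on a large half-disc, and we conclude by letting the disc grow and finally $\varepsilon\to 0$.

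Concretely, I would fix some exponent $\alpha\in(\tfrac12,1)$, say $\alpha=\tfrac34$, and define $h_\varepsilon(z):=e^{-\varepsilon z^\alpha}$ using the principal branch of $z^\alpha$, which is holomorphic on $\Omega$ and continuous on $\overline{\Omega}$. Writing $z=re^{i\theta}$ with $\theta\in[-\tfrac{\pi}{2},\tfrac{\pi}{2}]$, one has $\Re(z^\alpha)=r^\alpha\cos(\alpha\theta)\geq r^\alpha\cos(\alpha\pi/2)$, and the constant $c_\alpha:=\cos(\alpha\pi/2)$ is strictly positive because $\alpha<1$. This yields the two key estimates
\begin{equation*}
|h_\varepsilon(it)|=e^{-\varepsilon|t|^\alpha c_\alpha}\leq 1 \qquad(t\in\R),\qquad |h_\varepsilon(z)|\leq e^{-\varepsilon c_\alpha |z|^\alpha}\quad(z\in\overline\Omega).
\end{equation*}

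Set $g_\varepsilon:=f\cdot h_\varepsilon$, which is continuous on $\overline\Omega$ and holomorphic on $\Omega$. On the imaginary axis, hypothesis (1) and the first estimate give $|g_\varepsilon(it)|\leq M$. On the semicircle $\{z\in\overline\Omega:|z|=R\}$, hypothesis (2) and the second estimate give
\begin{equation*}
|g_\varepsilon(z)|\leq C\,e^{R^{1/2}-\varepsilon c_\alpha R^{\alpha}},
\end{equation*}
and since $\alpha>\tfrac12$, the right-hand side tends to $0$ as $R\to\infty$. Thus for every $\varepsilon>0$ there exists $R_\varepsilon>0$ such that $|g_\varepsilon(z)|\leq M$ on that semicircle whenever $R\geq R_\varepsilon$. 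Applying the maximum principle to $g_\varepsilon$ on the bounded domain $\{z\in\Omega:|z|<R\}$ (whose boundary consists of a piece of the imaginary axis and the semicircle), we conclude $|g_\varepsilon(z)|\leq M$ for all $z$ in the closed half-disc of radius $R\geq R_\varepsilon$, hence on all of $\overline\Omega$.

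Finally, for any fixed $z\in\overline\Omega$, $h_\varepsilon(z)\to 1$ as $\varepsilon\to 0^+$, so passing to the limit in $|f(z)||h_\varepsilon(z)|\leq M$ yields $|f(z)|\leq M$, which is the desired conclusion. The only subtle point, and the step I would double-check carefully, is the choice of $\alpha$: it must satisfy $\alpha<1$ so that $c_\alpha>0$ (to dampen on the imaginary axis and throughout the interior), but also $\alpha>\tfrac12$ so that $\varepsilon c_\alpha R^\alpha$ dominates $R^{1/2}$ on the large semicircles; the interval $(\tfrac12,1)$ is nonempty precisely because the growth exponent in hypothesis (2) is strictly below $1$.
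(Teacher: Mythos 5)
Your proposal is correct and follows essentially the same argument as the paper: multiply by the damping factor $e^{-\epsilon z^{3/4}}$ (the paper uses exactly the exponent $\tfrac34$), control the boundary of a large half-disc, apply the standard maximum principle, and let $\epsilon\to 0$. Your discussion of why the exponent must lie strictly between $\tfrac12$ and $1$ matches the role this plays in the paper's proof.
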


\begin{proof}
The proof is very simple and plays the situation back to the standard
maximum principle.
First, we note that the function $z\mapsto z^\frac34: \overline\Omega\to \C$
is continuous and holomorphic on $\Omega$.
Then, for $\epsilon>0$, we define an auxiliary function
$f_\epsilon: \overline{\Omega}\to\C$ by $f_\epsilon(z):=e^{-\epsilon
  z^\frac34}f(z)$. Again, $f_\epsilon$ is continuous and holomorphic
on $\Omega$. Furthermore, 
\[ |f_\epsilon(z)|=e^{-\epsilon\Re
    z^\frac34}|f(z)|=e^{-\epsilon|z|^\frac34\cos(\frac34\arg
    z)}|f(z)| \]
for all $z\in \overline\Omega$ and thus,
\[ |f_\epsilon(it)|=e^{-\epsilon
    |t|^\frac34\cos(\frac34\frac{\pi}{2})}|f(it)|\leq |f(it)|\leq M \]
for all $t\in\R$ and $\epsilon>0$ because
$\eta:=\cos(\frac34\frac{\pi}{2})>0$.
Next, we have the bound
\[ |f_\epsilon(z)|\leq e^{-\epsilon\eta|z|^\frac34}|f(z)|\leq
  Ce^{-\epsilon\eta
    |z|^\frac34+|z|^\frac12}=Ce^{-|z|^\frac34(\epsilon\eta-|z|^{-\frac14})}\to
  0 \]
as $|z|\to\infty$ and thus, $|f_\epsilon(z)|\leq M$ if $|z|$ is
sufficiently large. For $R>0$ we define the domain
\[ \Omega_R:=\{z\in \C: |z|<R\}\cap \Omega. \]
By the above, $f_\epsilon$ is holomorphic on $\Omega_R$, continuous on
$\overline\Omega_R$, and there exists an $R_\epsilon>0$ such that
$|f_\epsilon(z)|\leq M$ for all
$z\in \partial\Omega_R$, provided that $R\geq R_\epsilon$.
Consequently, by the maximum principle, $|f_\epsilon(z)|\leq M$ for
all $z\in \Omega_R$ and since this argument works for any $R\geq
R_\epsilon$,
we see that in fact $|f_\epsilon(z)|\leq M$ for all $z\in
\Omega$. This yields
\[ |f(z)|\leq e^{\epsilon |z|^{\frac34}}|f_\epsilon(z)|\leq
  Me^{\epsilon |z|^{\frac34}} \]
for any $z\in \Omega$ and any $\epsilon>0$ and upon letting $\epsilon\to 0$, we obtain the
desired bound.
\end{proof}

\subsection{Asymptotics of difference equations}

Another important building block in the proof of mode stability is the
asymptotic behavior of solutions to difference equations.

\begin{theorem}[Poincar\'e]
  \label{thm:Poincare}
  Let $p,q: \N\to\C$ and suppose that
  \[ p_\infty:=\lim_{n\to\infty}p(n),\qquad
    q_\infty:=\lim_{n\to\infty}q(n) \]
  exist. Assume further that there exist $z_1, z_2\in \C$
  with $|z_1|>|z_2|$ and such that
  \[ z_j^2+p_\infty z_j+q_\infty=0,\qquad j\in \{1,2\}. \]
  Let $a: \N\to\C$ satisfy
  \begin{equation}
    \label{eq:Poincarediff}
    a(n+2)+p(n)a(n+1)+q(n)a(n)=0
    \end{equation}
  for all $n\in\N$. Then either there exists an $n_0\in\N$ such that
  $a(n)=0$ for all $n\geq n_0$ or we have
  \[ \lim_{n\to\infty}\frac{a(n+1)}{a(n)}\in\{z_1,z_2\}. \]
\end{theorem}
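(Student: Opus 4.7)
The plan is to recast the scalar second-order recurrence \eqref{eq:Poincarediff} as a first-order iteration on $\C^2$ whose limiting transfer matrix has eigenvalues $z_1, z_2$, and then exploit the spectral gap $|z_1| > |z_2|$ via projective coordinates. First I would dispose of trivial cases: if $a(n_0) = a(n_0+1) = 0$ for some $n_0$, the recurrence propagates zero forward and the first alternative of the conclusion holds. Otherwise, using that $q_\infty = z_1 z_2$ and $p_\infty = -(z_1 + z_2)$, together with $|z_1| > |z_2| \geq 0$ (which forces $z_1 \neq 0$ and $z_1 \neq z_2$), a short zero-propagation argument shows $a(n) \neq 0$ for all sufficiently large $n$, so the ratio $r_n := a(n+1)/a(n)$ is eventually defined.

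Setting $X_n := (a(n+1), a(n))^\top$, the recurrence becomes $X_{n+1} = A(n) X_n$ with $A(n)$ the companion matrix of $X^2 + p(n) X + q(n)$, converging to $A_\infty$ whose characteristic polynomial is exactly $X^2 + p_\infty X + q_\infty$. Since the roots are distinct, $A_\infty$ diagonalizes as $A_\infty = P \, \mathrm{diag}(z_1, z_2) P^{-1}$ with $P$-columns $(z_j, 1)^\top$. Setting $Y_n := P^{-1} X_n =: (y_n, w_n)^\top$, the iteration reads $Y_{n+1} = (\mathrm{diag}(z_1, z_2) + E(n)) Y_n$ with $E(n) \to 0$, and the target ratio takes the form
\[ \frac{a(n+1)}{a(n)} = \frac{z_1 y_n + z_2 w_n}{y_n + w_n}. \]
It then suffices to establish the dichotomy that either $w_n / y_n \to 0$ (yielding limit $z_1$) or $y_n / w_n \to 0$ (yielding limit $z_2$). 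To this end, I would introduce the projective variable $\rho_n := w_n / y_n$ (swapping to $\mu_n := 1/\rho_n$ wherever $y_n$ vanishes), which satisfies a M\"obius-type recursion
\[ \rho_{n+1} = \frac{E_{21}(n) + (z_2 + E_{22}(n)) \rho_n}{(z_1 + E_{11}(n)) + E_{12}(n) \rho_n}, \]
whose limiting map is the strict linear contraction $\rho \mapsto (z_2/z_1) \rho$ with multiplier of modulus $|z_2/z_1| < 1$.

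The main obstacle is turning this hyperbolicity into the claimed dichotomy rigorously in the non-autonomous setting. I would fix $0 < \varepsilon < (|z_1| - |z_2|)/4$ and $N$ with $\|E(n)\| < \varepsilon$ for $n \geq N$, then verify that for $\delta > 0$ chosen suitably small, the disc $\{|\rho| \leq \delta\}$ is forward-invariant under the above recursion and attracting at geometric rate comparable to $|z_2/z_1|$. The remaining task is to identify the ``exceptional'' orbits that avoid this disc forever: the classical Perron argument realizes them as the fibers of a one-dimensional invariant slow subbundle of the linear system $Y_{n+1} = (\mathrm{diag}(z_1,z_2) + E(n)) Y_n$, constructed as the fixed point of a graph-transform contraction on the Banach space of sequences $(\mu_n)_{n \geq N}$ decaying like $(|z_2|/|z_1|)^n$. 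Orbits in this slow subbundle have $y_n / w_n \to 0$ and yield $a(n+1)/a(n) \to z_2$, while all other orbits are driven into the attracting disc, whence $w_n/y_n \to 0$ and $a(n+1)/a(n) \to z_1$. This invariant-splitting step is the technical heart of the classical Perron-Poincar\'e theorem and is carried out in standard references on difference equations; the only inputs required from our hypotheses are the spectral gap $|z_1| > |z_2|$ and the decay $E(n) \to 0$.
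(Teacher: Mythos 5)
Your proposal is correct in substance, and it is worth noting that it is actually more detailed than what the paper provides: the paper only gives an ``Idea of proof'' that solves the constant-coefficient limiting equation $a(n+2)+p_\infty a(n+1)+q_\infty a(n)=0$ explicitly, reads off the ratio limit from $a(n)=\alpha_1z_1^n+\alpha_2z_2^n$, and then asserts that this picture is stable under the perturbation $p(n)\to p_\infty$, $q(n)\to q_\infty$, deferring the actual perturbation argument to the literature. You instead outline how that stability is really proved (this is Perron's route): rewrite the recurrence as $X_{n+1}=A(n)X_n$ with companion matrices, diagonalize the limit $A_\infty$ using the spectral gap $|z_1|>|z_2|$, pass to the projective variable $\rho_n=w_n/y_n$, and obtain the dichotomy from a forward-invariant attracting disc together with a graph-transform construction of the slow subbundle, deferring only that last fixed-point step to standard references. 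So both texts stop short of the technical core, but yours identifies the correct mechanism (non-autonomous hyperbolicity in projective coordinates) rather than merely invoking it, and it makes transparent where the hypotheses $|z_1|>|z_2|$ and $E(n)\to 0$ enter.

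One point to repair: your claim that, after excluding two consecutive zeros, ``a short zero-propagation argument shows $a(n)\neq 0$ for all sufficiently large $n$'' is not available a priori. Zero propagation only rules out the eventually-zero alternative; it does not prevent isolated zeros of $a(n)$ from recurring infinitely often, and eventual nonvanishing of $a(n)$ is genuinely part of the theorem's content. Fortunately your own argument supplies it a posteriori: in the coordinates $Y_n=(y_n,w_n)$ the locus $a(n)=y_n+w_n=0$ corresponds to $\rho_n=-1$, while the projective orbit converges either to $0$ or to $\infty$, both bounded away from $-1$; hence $a(n)\neq 0$ for large $n$ and the ratio $\frac{a(n+1)}{a(n)}=\frac{z_1y_n+z_2w_n}{y_n+w_n}$ is eventually defined and converges to $z_1$ or $z_2$ as you state. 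With that reordering (treat the nonvanishing as a consequence of the projective convergence, not as a preliminary lemma), your outline is a faithful and more informative version of the classical Poincar\'e--Perron argument the paper cites.
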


\begin{proof}[Idea of proof]
  In order to understand what is going on, we consider the
  \emph{limiting equation}
  \begin{equation}
    \label{eq:limdiff}
    a(n+2)+p_\infty a(n+1)+q_\infty a(n)=0.
    \end{equation}
  Then it follows that the functions $n\mapsto z_j^n$ for $j\in
  \{1,2\}$ solve this equation simply because
  \[ z_j^{n+2}+p_\infty z_j^{n+1}+q_\infty z_j^n=z_j^n(z_j^2+p_\infty
    z_j+q_\infty)=0. \]
  Consequently, the general solution of Eq.~\eqref{eq:limdiff} is
  given by $a(n)=\alpha_1 z_1^n+\alpha_2 z_2^n$, where $\alpha_j\in
  \C$ can be chosen arbitrarily. Thus, if $\alpha_1\not=0$, we can
  write
  \[ a(n)=\alpha_1 z_1^n \left
      [1+\frac{\alpha_2}{\alpha_1}\left(\frac{z_2}{z_1}\right)^n\right] \]
  and $\lim_{n\to\infty}\frac{a(n+1)}{a(n)}=z_1$ follows immediately because
  $|\frac{z_2}{z_1}|<1$ by assumption. On the other hand, if
  $\alpha_1=0$, we obviously have
  $\lim_{n\to\infty}\frac{a(n+1)}{a(n)}=z_2$.
  Thus, since $p(n)$ and $q(n)$ get arbitrarily close to $p_\infty$
  and $q_\infty$ for large $n$, the proof consists of showing that the above logic is stable
  under a suitable perturbation argument, see e.g.~\cite{Ela05}.
\end{proof}

\bibliography{specss}
\bibliographystyle{plain}

\end{document}